\newtheorem{theorem}{Theorem}
\newtheorem{assumption}{Assumption}
\newtheorem{definition}{Definition}
\newtheorem{lemma}{Lemma}
\newtheorem{proposition}{Proposition}
\newtheorem{remark}{Remark}
\newtheorem{corollary}{Corollary}
\crefname{assumption}{assumption}{assumptions}
\renewcommand{\gg}{\mathbf{g}}
\providecommand{\pp}{\mathbf{p}}
\providecommand{\qq}{\mathbf{q}}
\providecommand{\xx}{\mathbf{x}}
\providecommand{\yy}{\mathbf{y}}
\newcommand{\nk}{|\cI_k|}
\newcommand{\nl}{|\cI_\ell|}
\newcommand{\NK}{\sum_{k<K}\nk}
\newcommand{\brhom}{\bar\rho^{-1}}
\newcommand{\E}{\mathbb{E}}
\providecommand{\norm}[1]{\left\lVert#1\right\rVert}
\newcommand{\cI}{\mathcal{I}}
\newcommand{\cO}{\mathcal O}
\newcommand{\R}{\mathbb R}
\newcommand{\cE}{\mathcal{E}}
\DeclareMathOperator{\argmin}{argmin}
\def\<#1,#2>{\langle #1,#2\rangle}
\renewcommand{\leq}{\leqslant}
\renewcommand{\geq}{\geqslant}
\renewcommand{\ge}{\geqslant}
\def\<{\langle}
\def\>{\rangle}
\def\|{\Vert}
\newcommand{\esp}[1]{\mathbb{E}\left[#1\right]}
\newcommand{\NRM}[1]{{{\left\| #1\right\|}}} 
\newcommand{\set}[1]{{{\left\{ #1\right\}}}} 
\newcommand{\proba}[1]{\mathbb{P}\left(#1\right)}
\renewcommand{\P}{\mathbb{P}}
\newcommand{\cV}{\mathcal{V}}
\newcommand{\cF}{\mathcal{F}}
\newcommand{\F}{\mathcal{F}}
\newcommand{\cP}{\mathcal{P}}
\newcommand{\N}{\mathbb{N}}
\newcommand{\cD}{\mathcal{D}}
\newcommand{\one}{\mathds{1}}
\newcommand{\myparagraph}[1]{\textbf{#1}}
\newcommand{\nex}{\mathrm{next}}
\newcommand{\prev}{\mathrm{prev}}
\newcommand{\edgevkwk}{{\set{v_k,w_k}}}
\newcommand{\edgevlwl}{{\set{v_\ell,w_\ell}}}
\newcommand{\edgeuu}{{\{u,u'\}}}
\newcommand{\edgeuv}{{\{u,v\}}}
\newcommand{\edgevw}{{\{v,w\}}}
\newcommand{\edgeuw}{{\{u,w\}}}
\begin{document}

%
\runningtitle{Asynchronous SGD on Graphs}

%

\twocolumn[

\aistatstitle{Asynchronous SGD on Graphs: a Unified Framework for Asynchronous Decentralized and Federated Optimization}

\aistatsauthor{ Mathieu Even \And Anastasia Koloskova
 \And  Laurent Massoulié }

\aistatsaddress{ Inria - ENS Paris \And EPFL, Switzerland \And Inria - ENS Paris } ]

\begin{abstract}
    Decentralized and asynchronous communications are two popular techniques to speedup communication complexity of distributed machine learning, by respectively removing the dependency over a central orchestrator and the need for synchronization. Yet, combining these two techniques together still remains a challenge. 
    In this paper, we take a step in this direction and introduce Asynchronous SGD on Graphs (AGRAF SGD) --- a general algorithmic framework that covers asynchronous versions of many popular algorithms including SGD, Decentralized SGD, Local SGD, FedBuff, thanks to its relaxed communication and computation assumptions.
    We provide rates of convergence under much milder assumptions than previous decentralized asynchronous works, while still recovering or even improving over the best know results for all the algorithms covered.

\end{abstract}

\section{Introduction}

We consider solving stochastic optimization problems that are distributed amongst $n$ agents (indexed by a set $\cV$) who can compute stochastic gradients in parallel. This includes classical federated setups, such as distributed and federated learning. Depending on the application, agents have access to either same shared data distribution or a different agent-specific distributions.
In recent years, such stochastic optimization problems have continued to grow rapidly in size, both in terms of the dimension $d$ of the optimization variable---i.e., the number of model parameters in machine learning---and in terms of the quantity of data---i.e., the number of data samples $m$ being used over all agents. With $d$ and $m$ regularly reaching the hundreds or thousands of billions \citep{chowdhery2022palm,touvron2023llama}, it is increasingly necessary to use parallel optimization algorithms to handle the large scale.

With \emph{communication cost} being one of the major bottlenecks of parallel optimization algorithms, there are several directions aimed to improve communication efficiency. Amongst the others (such as local update steps \citep{stich2018local,woodworth2020local} and communication compression \citep{alistarh2017qsgd,pmlr-v97-koloskova19a_chocosgd}), \textbf{decentralization} and \textbf{asynchrony} are the two popular techniques for reducing the communication time. 
Decentralization \citep{koloskova2020unified,lian2017candecentralized} eliminates the dependency on the central server---frequently a major bottleneck in distributed learning---while naturally amplifying privacy guarantees \citep{cyffers2022muffliato}.
Asynchrony \cite{recht2011hogwild,baudet1978asynchronous,tsitsiklis1986distributed} shortens the time per computation rounds and allows more updates to be made during the same period of time.
It aims to overcome several possible sources of delays: nodes may have \emph{heterogeneous hardware} with different computational throughputs~\citep{kairouz2019advances,horvath2021fjord}, \emph{network latency} can slow the communication of gradients, and nodes may even just \emph{drop out}~\citep{ryabinin2021moshpit}.
Moreover, slower ``\textit{straggler}'' compute nodes can arise in many natural parallel settings, including training ML models using multiple GPUs~\citep{chen2016revisiting} or in the cloud; sensitivity to these stragglers poses a serious problem for synchronous algorithms, that depend on the slowest agent. In decentralized synchronous optimization where communication times between pairs of nodes may be heterogeneous, the algorithm can even be further slowed down by \textit{straggling communication links}.

Combining both decentralization and asynchrony is a challenging problem, and it is only recently that this question has risen a surge of interest \citep{assran2021asynchronouspush,bornstein2023swift,luo2020prague,liu2022asynchronous,nadiradze2021asynchronous,even2021delays,zhang2021fully}.
These works are however restricted to a given communication protocol and static topologies \citep{assran2021asynchronouspush,lian2015asynchronous,bornstein2023swift,nadiradze2021asynchronous,even2021delays}, no communication delays \citep{lian2015asynchronous,bornstein2023swift,nadiradze2021asynchronous}, or their analyses rely on an upper-bound on the maximal computation delay \citep{assran2021asynchronouspush,lian2017asynchronous,bornstein2023swift,luo2020prague,liu2022asynchronous,nadiradze2021asynchronous,zhang2021fully,pmlr-v202-wu23n}.
In this work we aim to circumvent these shortcomings. We study an asynchronous version of decentralized SGD in a unified framework that relaxes overly strong communication assumptions imposed by prior works. Our framework covers time-varying topologies, arbitrary computation orders and local update steps. We prove an improved rates of convergence under such a weaker communication assumptions, covering and improving asynchronous versions of many common distributed and federated algorithms.

\subsection{Contributions}

\textbf{(i)} We introduce \textbf{AGRAF SGD} (Asynchronous SGD on graphs), a unified formulation of an asynchronous version of the synchronous Decentralized SGD as formulated by \cite{koloskova2020unified}. One of the strengths of AGRAF SGD is that it formally takes the form of a simple sequence (\Cref{eq:agraf}), allowing for an effective theoretical analysis, while covering asynchronous versions of many distributed algorithms such as Asynchronous SGD, Decentralized SGD, FedAvg or FedBuff.

\textbf{(ii)} We analyze the AGRAF SGD sequence under various combinations of convexity, non-convexity, smoothness and Lispchitzness assumptions. We use a relaxed communication assumption that only imposes that the different topologies mix in a given window of time, while our computation assumption depends on whether the local functions are homogeneous or heterogeneous.
In special cases, our rates recover best known rates of Minibatch SGD, Asynchronous SGD or Decentralized SGD, while for Asynchronous Decentralized SGD, our rates improve the previous works by up to factors of order $n^2$, under relaxed assumptions (as summarized in \Cref{tb:rates}).

\textbf{(iii)} Finally, we show that AGRAF SGD allows to efficiently handle communication delays in decentralized optimization, by introducing \textit{Decentralized SGD on Loss Networks}. We show that the assumptions required in our analysis are satisfied by this algorithm, giving explicit rates of convergence that depend on the underlying graph topology, pairwise communication delays, and each device computation time.

\subsection{Related works}

\myparagraph{Asynchronous optimization.}
Asynchronous optimization has a long history. In the 1970s, \cite{baudet1978asynchronous} considered shared-memory asynchronous fixed-point iterations, and an early convergence result for Asynchronous SGD was established by \cite{tsitsiklis1986distributed}. Recent analysis typically relies on bounded delays \citep{agarwal2011distributed,recht2011hogwild,lian2015asynchronous,stich2020error}, while some algorithms try to adapt to the delays \citep{sra2016adadelay,zheng2017asynchronous,mishchenko2018delay,koloskova2022sharper,asynchronous_sgd_mischenko,Feyzmahdavian2021asynchronous}, in order to depend only on an average delay. For more examples of stochastic asynchronous algorithms, we refer readers to the surveys by \cite{ben2019demystifying,assran2020advances}. 
More closely related to our analysis techniques, \cite{mania2017perturbed} proposed and utilized the analysis tool of \textbf{virtual iterates} for Asynchronous SGD under bounded delays, extended by \cite{koloskova2020unified,asynchronous_sgd_mischenko} who proved that Asynchronous SGD performs well under arbitrary delays. We adapt this proof approach to decentralized optimization in order to obtain some robustness towards large delays and introduce a different virtual sequence for the averaged model over all the nodes.

\myparagraph{Decentralized SGD and asynchrony.}
Decentralized SGD \citep[e.g.]{koloskova2022sharper} consists in iterations where at every time step, all nodes perform local SGD steps, and communicate their local model with their neighbors in a graph (that may vary with time, but that needs to mix in an ergodic way).
The closest works to ours \cite{lian2017asynchronous,bornstein2023swift} proposed asynchronous versions of decentralized SGD where at each iteration, \textbf{one} node $v_k$ is sampled \textit{independently from the past} (with fixed probabilities), and this node performs a local stochastic gradient step and an averaging operation with its neighbors. We extend their sequence and results to a more general (due to relaxed communication and computation assumptions)
asynchronous version of decentralized SGD, that keeps the ‘‘unified'' point of view of the work of \cite{koloskova2020unified}.
\cite{assran2021asynchronouspush} considers asymmetric communications (\textit{push sum}) and all the agents performing computations at every iterations in a synchronous way, \cite{nadiradze2021asynchronous} considers quantized pairwise communications as in the historical gossip algorithm \citep{boyd2006gossip}, but no communication nor computation delays, while \cite{luo2020prague,agarwal2009information} do not provide convergence guarantees. Orthogonally, \cite{even2021delays} consider both communication and computation delays in a \textit{continuized} framework \citep{even2021continuized}, allowing more degrees of freedom for the algorithm and the analysis, but their work does not apply to modern ML tasks; still, our Loss Network section relates to this line of work due to the introduction of continuous-time physical delays.

\section{AGRAF Algorithmic Framework}
In this section we present AGRAF SGD---our algorithmic framework for asynchronous decentralized SGD---and give examples of existing popular algorithms that it can cover.
\subsection{Asychronous SGD on graphs}

We consider a connected undirected graph $G=(\cV,\cE)$\footnote{Since we consider varying topologies, this graph should be thought as the union of graphs considered over time.} on a set of nodes $\cV=\set{1,\ldots,n}$. 
Let the function $f_v:\R^d\to\R$ of agent $v \in \cV$ be defined as
\begin{equation}\label{eq:local_obj}
    f_v( x):=\esp{F_v( x,\xi_v)}\quad \xi_v\sim \cD_v\,,\quad  x\in\R^d\,,
\end{equation}
where $\cD_v$ is some local distribution.
Let the global objective function $f:\R^d\to\R$ be defined as follows, and consider the optimization problem \vspace{-5pt}
\begin{equation}
    \label{eq:the-objective}
    \min_{x\in\R^d}\set{f(x):=\sum_{v\in\cV}q_v f_v(x)}\,,\vspace{-5pt}
\end{equation}
for some non-negative weights $(q_v)$ that sum to 1.
We classically assume that node $v$ in the graph has access to unbiased stochastic gradients of $f_v$ (of the form $F_v( x,\xi_v)$).
The standard goal of decentralized optimization is to minimize $f$ using only local computations and communications (only neighboring nodes in the graph can communicate).

\myparagraph{Notations.}
Standard small letters ($x,g,y,z$, etc) are for vectors in $\R^d$. Capital letters (mostly $W$) are for matrices in $\R^{\cV\times \cV}$. Bold letters $\xx,\gg,\ldots$ are for \emph{concatenated vectors} in $\R^{\cV\times d}$, that we write as $\xx=(x_v)_{v\in\cV}$. For some vector $x\in\R^d$, we denote $\xx\in\R^{\cV\times d}$ the concatenated vector such that $\xx_v=x$ for all $v\in\cV$.
$\one\in\R^\cV$ is the vector with all entries equal to 1.
For $\xx\in\R^{\cV\times d}$, we denote $\bar \xx=\frac{1}{n}\one\one^\top \xx$.

In this paper we study a general scheme for \textbf{\emph{asychronous SGD on graphs (AGRAF)}} which is summarized in \Cref{algo:AGRAF}: workers asynchronously perform local SGD steps (lines 3-4), while an underlying \textbf{linear communication algorithm} is running \textit{without incurring communication delays} (line 7). A linear communication algorithm on graph $G$ implies that any communication update can be formulated as $\xx_+=W\xx_-$ where $\xx_+,\xx_-\in \R^{\cV\times d}$ are respectively the global state after and before the communication update, and $W\in\R^{\cV\times \cV}$ is a \textbf{communication matrix} with $W_{v,w}$ being zero for disconnected nodes $v, w$, i.e. $W_{v,w}\ne 0$ iff $\edgevw\in\cE$.

\begin{algorithm}[t]
    \caption{Asynchronous SGD on graph $G$ (AGRAF SGD)}
    \label{algo:AGRAF}
    \begin{algorithmic}[1]
    \State \textbf{Input:} 
    $\bar x^0\in \R^{d}$, $ x_v=\bar  x^0$ for $v\in\cV$ initialized local variables, stepsize $\gamma>0$
    \For{$v\in\cV$,}
        \State Upon finishing computation of a stochastic gradient $\nabla F(\tilde x_v,\tilde\xi_v)$ at some previous local current state $\tilde x_v$,
        \begin{equation*}
             x_v\longleftarrow  x_v-\gamma \nabla F_v(\tilde x_v,\tilde\xi_v)\,.
        \end{equation*}
        \State Compute $\nabla F_v( x_v,\xi_v)$ for $\xi_v\sim\cD_v$ independently from the past, at current state $ x_v$.
     \EndFor
    \While{procedure still running}
        \State Run any \textbf{linear communication algorithm} on graph $G$ incurring no communication delay.
    \EndWhile
    \end{algorithmic}	
\end{algorithm}

Since every agent asynchronously works at their own speed and communicates in a decentralized way, there is no global state. Keeping track of a global ordering of the iterates involving both computation and communication updates is thus a challenge. In the next subsection we address this challenge and propose a way to effectively cast Algorithm~\ref{algo:AGRAF} into equations with ordered updates. This reformulation is a key novelty of our work. It allows for an improved theoretical analysis with better rates together with relaxed communication and computations assumptions, allowing AGRAF SGD to cover asynchronous versions of many popular distributed and federated algorithms.

\subsection{The sequence studied}

We denote by $T_0=0$ the initialization time of the algorithm  and by $\set{0<T_1<T_2<\ldots}$ the times at which the local computation updates are made. 
Note that these are physical (continuous) times, and that several agents may possibly finish their local computations at the same time $T_k$. We also assume that computational updates are \textbf{atomic}.
For some time $T$, we denote as $T-$ the left limit ($\lim_{t\to T,t<T}$) and $T+$ the right limit ($\lim_{t\to T,t>T}$).
For time $t\in\R^+$ (physical time), let $ x_v(t)\in\R^d$ denote the state of the local variable at time $t$, and let $ \xx(t)=( x_v(t))_{v\in\cV}$.
For $k\geq 0$ and $v\in\cV$, let $ x^k_v$ denote the state of the local variable at node $v$ at time $T_k+$ \emph{i.e.}, $ x_v^k= x_v(T_k+)=\lim_{t\to T_k,t>T_k} x_v(t)$ and let $\xx^k=(x_v^k)_{v\in\cV}$.

\myparagraph{Communication updates.}
For $k\geq0$, none to plenty of communication updates may have happened between the computational update times $T_k$ and $T_{k+1}$.
We encode these communication updates by a \emph{single} matrix $W_k$: $W_k$ is thus the product of all communication matrices corresponding to communication updates between times $T_k$ and $T_{k+1}$.
Hence, we can write:
\begin{equation*}
     \xx({T_{k+1}-})=W_k  \xx({T_k+})\,.
\end{equation*}
If no communication happened between two gradients computed, we have $W_k = I_d$. If there are $r$ communications between times $T_k+$ and $T_{k+1}-$ that happened at times $T_k<T_{k,1}<\ldots<T_{k,r}<T_{k+1}$, denoting by $W_{k,r}$ the communication matrix corresponding to communication updates at time $T_{k,r}$, we have $W_k=W_{k,r}\cdot\ldots \cdot W_{k,2}\cdot W_{k,1}$. Note that for $r=0$ this product is taken equal to $I_d$.

\myparagraph{Computation updates.} For $k\geq1$, let $\cI_k\subset\cV$ be the set of nodes that finish computing stochastic gradients $\nabla F_v(\tilde x_{v}^k,\tilde\xi_{v}^k)$ for $v\in\cI_k$ at time $T_k-$.
The computation updates, that are assumed to be \textbf{atomic}, then read:
\begin{equation*}
     x_{v}({T_k+})= x_{v}({T_k-})-\gamma \nabla F(\tilde x_{v}^k,\tilde\xi_{v}^k)\,,\quad v\in \cI_k\,,
\end{equation*}
where $\tilde x_{v}^k= x_{v}^{k-1-\tau(k,v)}$ and $\tilde\xi_{v_k}^k=\xi_{v_k}^{k-1-\tau(k,v)}$, for $\tau(k,v)\geq 0$ the delay of this update that corresponds to the number of computation updates performed by other nodes during the computation of the local stochastic gradient.

\myparagraph{The sequence studied.} Combining communication and computation updates, the sequence generated by Algorithm~\ref{algo:AGRAF} follows the following recursion:\vspace{-3pt}
\begin{equation}\label{eq:agraf}
     \xx^{k+1}=W_k \xx^k - \gamma   \gg^{k}\,,\vspace{-3pt}
\end{equation}
where $  g^{k}_w=0$ for $v\notin \cI_k$, and $g^{k}_{v}=\nabla F_v( x_{v}^{k-\tau(k,v)},\xi_{v}^{k-\tau(k,v)})$.

What is important to keep in mind is that the iterates $\xx^{k + 1}$ are taken at the time just after computation updates (time $T_k+$) so that $k$ denotes the number of computation updates. $\cI_k$ is the set of nodes that perform computation updates at iteration $k$, it can be any subset of $\cV$, and $\NK$ denotes the total number of stochastic gradients computed up to iteration $K$ by \textbf{all} the agents.
The matrix $W_k$ encodes all communications that happened between the $k$-th and $(k+1)$-th computation updates (there can be any number such communications, the more there are the more $(W_k)$ will mix).

\subsection{AGRAF SGD is the right formulation of Asynchronous Decentralized SGD}\label{sec:agraf_sgd_right_form}

Recall that the Decentralized SGD algorithm \citep[e.g.]{koloskova2022sharper} consists in iterations of the form:\vspace{-3pt}
\begin{equation}\label{eq:dsgd}
    x_v^{k+1}=\sum_{w\sim v} W_\edgevw^{(k)} x_w^k - \nabla F_v(x^k_v,\xi_v^k)\,,\quad \forall v\in\cV\,,\vspace{-3pt}
\end{equation}
for communication matrices $(W^{(k)})_k$ satisfying \Cref{hyp:consensus}. The question thus arises: how can Decentralized SGD be turned into an asynchronous algorithm?
Previous works \citep{lian2017asynchronous,bornstein2023swift} proposed and analyzed schemes that take the following form: at each iteration, \textbf{one} node $v_k$ is sampled independently from the past (with fixed or lower bounded probability), and this node performs a local stochastic gradient step together with an averaging operation with its neighbors in the graph.
This results in updates of the form of AGRAF SGD, for $\cI_k=\set{v_k}$ and $W_k$ a matrix that depends on $v_k$ and that mixes (in mean) independently from the past ($\esp{W_k|W_0,\ldots,W_{k-1}}$ mixes well).

Leaving the analyses aside, this prior approach it too restrictive: \textbf{(i)} \emph{communication assumptions} do not allow varying topologies that may mix but only in the long run, which may particularly be the case for asynchronous algorithms, and \textbf{(ii)} \emph{computation assumptions} do not allow for more than one worker to update their value at the same time; having a sampling assumption restricts the type of delays that the algorithm can handle; and nodes that compute should not necessarily be correlated to communicating edges since this forbids the use of several local SGD steps.

AGRAF SGD thus appears as a natural way to make Decentralized SGD asynchronous: nodes are not forced to all perform computations at the same time as in \cref{eq:dsgd}, and having the relaxed communication assumption (\Cref{hyp:consensus}) allows any communication order, especially when one considers $W_k$ as a concatenation of all communications that may happen between two consecutive computations.

\subsection{Some examples covered by AGRAF}

We now give a few examples of algorithms (\emph{i.e.} communication and computation schedules) that can be cast as AGRAF SGD.
The three first are degenerate cases.

\myparagraph{Minibatch SGD} and \textbf{Asynchronous SGD} are obtained by setting $W_k=\frac{1}{n}\one\one^\top$, and $\cI_k=\cV$ and $\cI_k=\set{v_k}$ for some node $v_k$ respectively.

\myparagraph{Decentralized (local) SGD.}
Set $\cI_k=\cV$ and $(W_k)_k$ a sequence of gossip matrices to obtain Decentralized SGD \citep{SundharRam2010}. Note that in that case there are no computation delays, since this algorithm is inherently synchronous and all nodes perform updates at the same time. As done in \cite{koloskova2020unified}, periodic communications are possible, allowing to recover algorithms with several local gradient steps between each communication round, such as \textbf{Local (Decentralized) SGD} \citep{stich2018local} or \textbf{FedAvg} \citep{mcmahan2017communication}.

\myparagraph{Asynchronous Decentralized SGD.}
As explained in \Cref{sec:agraf_sgd_right_form}, AGRAF SGD covers Asynchronous Decentralized SGD beyond particular instances previously studied \citep{lian2017asynchronous,bornstein2023swift}.
Furthermore, since we make relaxed communication/computation assumptions, we cover more general decentralized algorithms that allow \textbf{local} gradient steps between communications, varying topologies, and arbitrary computations. As such, together with covering an asynchronous version of Decentralized SGD \eqref{eq:dsgd}, we also cover asynchronous versions of \textbf{FedAvg} or\textbf{Local SGD}, together with \textbf{FedBuff} \citep{pmlr-v151-nguyen22b_FedBuff}.

\myparagraph{Asynchronous SGD on Loss Networks.}
If communication latencies are not negligible compared to computational ones, designing an algorithm that is asynchronous and decentralized becomes much more challenging, as the naive implementation might lead to deadlocks. 
In order to handle non-negligible communication delays,  we use \textbf{\emph{loss networks}} \citep{lossnetworks1991kelly} to enforce that the edges adjacent to \textit{‘‘busy''}  nodes are prohibited to be used for communicating\footnote{Loss Networks were initially introduced by F. Kelly to model telecommunication networks, where the same mobile phone cannot initiate another phone call while being busy with another call. In our case, phone calls should be thought as communicating with a neighbor.}.
This enables us to design communication/computation schemes that fit in the AGRAF framework, while not violating the physical delay constrains.
We introduce these Loss Networks in \Cref{sec:LN}: we define them more thoroughly, and provide their ergodic mixing properties with explicit constants that depend on the graph topology and local communication and computation delays.


\section{Assumptions and Notations}

We consider solving the problem \eqref{eq:the-objective} under several standard \cite[see, e.g.,][]{convexoptim_bubeck} combinations of conditions on the objective $F$.
We denote the minimum of~$f$ as $f^* \coloneqq \min_{x\in\R^d} f(x)$, an upper bound on the \textbf{initial suboptimality} $\Delta \geq f(\bar x^0) - f^*$, and an upper bound on the \textbf{initial distance} to the minimizer $D \geq \min\set{\NRM{x_0-x^*} : x^* \in \argmin_x f(x)}$ that we assume to exist.
$\NRM{\cdot}$ denotes the Euclidean norm.
A function $F$ is \textbf{convex} if for each $x,y$ and subgradient $g \in \partial F(x)$, we have $F(y) \geq F(x) + \langle g,y-x\rangle$.
When $F_v$ and $f_v$ are convex, we do not necessarily assume they are differentiable, but we abuse notation and use $\nabla f_v(x)$ and $\nabla F_v(x;\xi)$ to denote an arbitrary subgradient at $x$.
The loss $F_v$ is $B$-\textbf{Lipschitz-continuous} if for each $x,y$ and $\xi$, we have $|F_v(x;\xi) - F_v(y;\xi)| \leq B\NRM{x - y}$.
The objective $f_v$ is $L$-\textbf{smooth} if it is differentiable and its gradient is $L$-Lipschitz-continuous. 
We also assume the stochastic gradients have $\sigma^2$-\textbf{bounded variance}\footnote{which can easily be generalized to $\esp{\NRM{\nabla f_v(x)-\nabla_xF_v(x,\xi_v)}^2}\leq \sigma^2 + \delta^2 \NRM{\nabla f_v(x)}^2$.}.

\begin{assumption}[Noise]\label{hyp:noise_variance}
    There exists $\sigma^2$ such that for all $x$ and $v\in\cV$, we have $\esp{\nabla_xF_v(x,\xi_v)}=\nabla f_v(x)$ and $\esp{\NRM{\nabla f_v(x)-\nabla_xF_v(x,\xi_v)}^2}\leq \sigma^2$, where $\xi_v\sim \cD_v$.
\end{assumption}


\myparagraph{Graph, communications and mixing.}
We now formulate the communication assumptions we will make. For $k\geq 0$, as opposed to some previous Asynchronous Decentralized SGD analyses \citep{lian2017asynchronous,bornstein2023swift}, we do not want to assume that $W_k$ mixes well in mean (i.e., that the spectral gap of $\esp{W_k|W_0,\ldots,W_{k-1}}$ is non-null or some other related assumption), since $W_k$ may possibly be the identity matrix.
We use the least restrictive assumption under which convergence of (synchronous) decentralized SGD is established \citep{koloskova2020unified}, by assuming that if we wait enough communication updates, a consensus will ultimately be achieved.

\begin{assumption}[Ergodic mixing]\label{hyp:consensus}
$W_k\one=\one$ and  there exist $\rho,k_\rho>0$ such that we have $\forall k\in\N$ and $\forall  \xx\in\R^\cV$:
\begin{equation}\label{eq:hyp_consensus}
\begin{aligned}
     &\esp{\NRM{W^{(k:k+k_\rho)} \xx- \bar\xx}^2\!|\cF_k}\!\leq\! (1-\rho)^2\NRM{ \xx-\bar \xx}^2,
\end{aligned}
\end{equation}
where for $k,\ell\geq0$, $W^{(k,\ell)}=W_{\ell-1}\ldots W_{k+1}W_k$, and $\cF_k=\sigma(\xx^s,\gg^{s-1},W_{s-1},s\leq k)$ is the filtration up to step $k$.
\end{assumption}

This assumption makes it possible to consider any ‘‘reasonable'' communication scheme. 
In the rest of the paper, when assuming that \Cref{hyp:consensus} holds for some constants $(\rho,k_\rho)$, we write $\bar\rho=\frac{e-1}{e}\frac{\rho}{k_\rho}$ (with $e=\exp(1)$), and this quantity is used in our main results.

\myparagraph{Heterogeneous and homogeneous settings, sampling assumptions.}
Assuming that the sequence of nodes $(\cI_k)_{k\geq0}$ that iteratively perform local updates is arbitrary makes it possible to encompass all possible computation orderings and cover arbitrary delays. It is much more general than assuming that $\cI_k=\cV$ for all $k$ (decentralized SGD) or $\cI_k=\set{v_k}$ for $v_k$ sampled independently from the past, as assumed in most previous asynchronous decentralized works \citep{lian2015asynchronous,bornstein2023swift}.
However, if functions $f_v$ are not all equal and if the sequence $v_k$ is arbitrary, convergence to the global function $f$ cannot be assured (some of the nodes $v$ might simply never appear during training). We therefore need to make some sampling assumption if we assume that local functions can be heterogeneous. We will thus assume either one the two following assuptions: \textbf{(i)} the heterogeneous setting where local functions $f_v$ can be different, but where we make some node-sampling assumption for computations, and \textbf{(ii)} the homogeneous setting, where computations can be arbitrary, but functions $f_v$ are all the same.
Note that it is classical in asynchronous optimization to either assume \textbf{(i)} or \textbf{(ii)}; for instance, Asynchronous SGD with arbitrary orderings is proved to converge only under such assumptions \citep{asynchronous_sgd_mischenko,koloskova2022sharper}. However, Asynchronous Decentralized works only assume that the sampling assumption \textbf{(i)} holds.
Formally, we summarize these into the following two assumptions.
\begin{assumption}[Heterogeneous setting]\label{hyp:hetero}
    There exists $\zeta^2$ such that the \textbf{population variance} satisfies: \vspace{-5pt}
    \begin{equation}\label{eq:pop_variance}
      \sum_{v\in\cV}q_v\NRM{ \nabla f_v(x)-\nabla f(x)}^2\leq \zeta^2\,, \quad \forall x\in\R^d\,.  \vspace{-5pt}
    \end{equation}
    There exists $\pp=(p_v)_{v\in\cV}\in[0,1]^\cV$ such that the sequence $(\one_{v\in\cI_k})_{k\geq 0}$ is \emph{i.i.d.}~distributed, with $\proba{v\in\cI_k}=p_v$ for all $k\geq0,v\in\cV$.
    We denote $\kappa_{\pp}=\frac{p_{\max}}{\bar p}$, $p_{\max}=\max_v p_v$ and $\bar p=\sum_{v\in\cV}p_v$
    Furthermore, we assume that $\pp$ is proportional to $\qq$: $\pp=\beta \qq$, and since $\sum_v q_v =1$, we thus have $\beta= n \bar p$.
\end{assumption}

\begin{assumption}[Homogeneous setting]\label{hyp:homo}
    All functions $f_v$ satisfy $f_v\equiv f$. No assumption on $(\cI_k)_{k\geq0}$.
\end{assumption}

\section{General Convergence Analysis}

We now turn to our main results: convergence guarantees for AGRAF SGD, under a variety of regularity assumptions and settings.
Note that in almost all cases, our rates do not depend on any upper bound on the maximal delays, which is a key feature of our analysis. This is also the case for asynchronous SGD \citep{koloskova2022sharper,asynchronous_sgd_mischenko} or a recent asynchronous decentralized SGD work \citep{bornstein2023swift}.
In this section, while presenting the results, we will only compare our results to degenerate baselines such as minibatch SGD, asynchronous SGD or decentralized SGD, in order to give simple arguments to show that our rates have expected order of magnitudes, leaving more complex comparisons and applications to be developed in \Cref{sec:applications}.
We first start with convex-Lipschitz losses. In this section, all the rates are obtained for \textbf{a constant stepsize $\gamma$} (that differs in each different case and is time-horizon dependent), explicited in the proofs in the Appendix. 

\begin{theorem}[Lipschitz-convex rate]\label{thm:lip-conv}
    Assume that $f$ is convex and that for almost all (\textit{i.e.}, with probability 1) $\xi\sim\cD_v$ $F_v(\cdot,\xi)$ is $B$-Lipschitz for some $B>0$, let $D^2\geq \NRM{ x_0- x^\star}^2$, and $F_K=\esp{f\left(\frac{1}{\NK}\sum_{k=0}^{K-1} \sum_{v\in\cI_k} x_{v}^k\right)-f( x^\star)} $.

            \textbf{1.} In the \textbf{homogeneous} setting (\Cref{hyp:homo}), 
        \begin{align*}
            F_K=\cO\left(\sqrt{\frac{B^2D^2n\bar\rho^{-1}}{\NK}}\right)\,.
        \end{align*}
        
        \textbf{2.} In the \textbf{heterogeneous} setting (\Cref{hyp:hetero}),
        \begin{align*}
            F_K=\cO\left(\sqrt{\frac{B^2D^2}{\NK}\times n\sqrt{p_{\max}}(\sqrt{\kappa_\pp}+\bar\rho^{-1})}\right)\,.
        \end{align*}

\end{theorem}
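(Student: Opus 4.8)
The plan is to reduce both statements to a convex-SGD analysis of a single \emph{virtual averaged iterate} $\bar x^k=\tfrac1n\sum_{v\in\cV}x_v^k$. Applying $\tfrac1n\one\one^\top$ to the recursion~\eqref{eq:agraf} and using that the $W_k$ are average-preserving ($W_k\one=\one$ and doubly stochastic) makes the communication term disappear, leaving $\bar x^{k+1}=\bar x^k-\tfrac\gamma n\sum_{v\in\cI_k}\nabla F_v(x_v^{k-\tau(k,v)},\xi_v^{k-\tau(k,v)})$, i.e.\ an SGD recursion whose descent directions are gradients evaluated at \emph{delayed, non-consensual} points. I would expand $\norm{\bar x^{k+1}-x^\star}^2$, take the conditional expectation (so each $\nabla F_v(x_v^{k-\tau(k,v)},\xi)$ is unbiased for $\nabla f_v(x_v^{k-\tau(k,v)})$, a point measurable before step $k$), and for each $v\in\cI_k$ use convexity at $x_v^{k-\tau(k,v)}$ together with $\norm{\nabla F_v}\le B$ to get $\langle\nabla f_v(x_v^{k-\tau(k,v)}),\bar x^k-x^\star\rangle\ge f_v(x_v^{k-\tau(k,v)})-f_v(x^\star)-B\norm{\bar x^k-x_v^{k-\tau(k,v)}}$. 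This yields a one-step inequality in which the decrease of $\E\norm{\bar x^k-x^\star}^2$ controls $\tfrac{2\gamma}{n}\E\sum_{v\in\cI_k}\big(f_v(x_v^{k-\tau(k,v)})-f_v(x^\star)\big)$ minus an error $R_k$ that splits into a \emph{consensus} term $\tfrac{2\gamma B}n\sum_{v\in\cI_k}\norm{x_v^{k-\tau(k,v)}-\bar x^{k-\tau(k,v)}}$, a \emph{delay-drift} term $\tfrac{2\gamma B}n\sum_{v\in\cI_k}\norm{\bar x^k-\bar x^{k-\tau(k,v)}}$, and a \emph{second-moment} term $\gamma^2\E\norm{\tfrac1n\sum_{v\in\cI_k}\nabla F_v}^2$.

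Each piece of $\sum_k R_k$ is bounded separately. For the consensus term I would first establish, from \Cref{hyp:consensus} exactly as in the synchronous analysis of \cite{koloskova2020unified}, that $\sum_{k<K}\E\norm{\xx^k-\bar\xx^k}^2\le C\,\gamma^2\bar\rho^{-2}\sum_{k<K}\E\norm{\gg^k}^2$, then use $\norm{\gg^k}^2\le\sum_{v\in\cI_k}\norm{\nabla F_v}^2\le|\cI_k|B^2$, so the total consensus budget is $O(\gamma^2\bar\rho^{-2}B^2\,\NK)$; Cauchy--Schwarz in $k$ converts this squared quantity into the desired linear-in-$\bar\rho^{-1}$ contribution. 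For the delay-drift term, $\norm{\bar x^k-\bar x^{k-\tau(k,v)}}\le\gamma\sum_{j=k-\tau(k,v)}^{k-1}\norm{\tfrac1n\sum_{w\in\cI_j}\nabla F_w}\le\tfrac{\gamma B}n\sum_{j=k-\tau(k,v)}^{k-1}|\cI_j|$, and after exchanging the sums over $k$ and $j$, $\sum_{k<K}\sum_{v\in\cI_k}\sum_{j=k-\tau(k,v)}^{k-1}|\cI_j|=\sum_j|\cI_j|\cdot\#\{\text{gradients in flight at step }j\}\le n\,\NK$, since each node carries at most one unfinished gradient at any instant --- this is the step that keeps the bound \emph{free of the maximal delay} $\taum$. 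For the second moment, in the \textbf{homogeneous} case $\cI_k$ is deterministic and $\norm{\tfrac1n\sum_{v\in\cI_k}\nabla F_v}^2\le|\cI_k|^2B^2/n^2$ with $\sum_k|\cI_k|^2\le n\,\NK$; in the \textbf{heterogeneous} case I would use independence of the $(\one_{v\in\cI_k})_v$ to get $\E\norm{\tfrac1n\sum_v\one_{v\in\cI_k}\nabla F_v}^2\le\tfrac1{n^2}\big(\sum_v p_v(1-p_v)B^2+\norm{\sum_v p_v\nabla f_v}^2\big)$, and it is the interaction of this bound with the normalization $\pp=\beta\qq$, $\beta=n\bar p$, and with the $q$-weighted norms ($q_{\max}=p_{\max}/(n\bar p)$, which surfaces in the consensus conversion via Lipschitzness) that produces the factor $n\sqrt{p_{\max}}(\sqrt{\kappa_\pp}+\bar\rho^{-1})$ rather than a cruder one.

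To finish, I would sum the one-step inequalities over $k=0,\dots,K-1$, telescope the potential down to $D^2$, and divide through. In the homogeneous case $f_v\equiv f$, so the descent sum is already $\tfrac{2\gamma}n\sum_{k,v\in\cI_k}(f(x_v^{k-\tau(k,v)})-f^\star)$; after the reindexing described below and Jensen's inequality for the convex $f$ (pulling the average $\tfrac1\NK\sum_{k,v\in\cI_k}x_v^k$ inside $f$) this gives $\tfrac{2\gamma}{n}\NK\,F_K\lesssim D^2+O(\gamma^2\bar\rho^{-1}B^2\NK/n)$, and optimizing the (time-horizon-dependent) constant $\gamma$ yields $F_K=\cO(\sqrt{B^2D^2n\bar\rho^{-1}/\NK})$. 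In the heterogeneous case I take the full expectation and use $\E[\one_{v\in\cI_k}h_v]=p_v\E[h_v]=\beta q_v\E[h_v]$ to turn the descent sum into $\tfrac{2\gamma\beta}{n}\sum_k\sum_v q_v\E(f_v(x_v^{k-\tau(k,v)})-f_v(x^\star))$, which aligns the sampling with the objective weights; one further use of $B$-Lipschitzness replaces $\sum_v q_v f_v(\cdot)$ by $f(\bar x^{\,\cdot})$ at the cost of a consensus term entering \emph{linearly}, and Jensen together with the (consensus-controlled) relation between the $\qq$-weighted iterate average and $\tfrac1\NK\sum_{k,v\in\cI_k}x_v^k$ recovers $F_K$; optimizing $\gamma$ then gives the stated heterogeneous rate.

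I expect two steps to be the main obstacles. The first is the \emph{reindexing} between the delayed iterates $x_v^{k-\tau(k,v)}$ appearing in the descent sum and the iterates $x_v^k$ appearing in the definition of $F_K$: reorganizing the double sum by node --- node $v$'s successive stochastic gradients are computed at $v$'s own successive post-update states --- shows the two sums coincide up to $O(n)$ boundary terms, one per gradient in flight at the horizon, and one must absorb these into $\Delta$/$D^2$ (using $f\ge f^\star$, possibly after a short ``drain'' of the in-flight gradients) so that no dependence on $\taum$ reappears. The second is the heterogeneous bookkeeping: obtaining exactly $n\sqrt{p_{\max}}(\sqrt{\kappa_\pp}+\bar\rho^{-1})$ rather than a lossier factor requires carefully tracking the $\beta$-renormalization against both the $\cI_k$-variance term and the linearly-entering consensus term, and retaining $q$-weighted norms ($q_{\max}$, $\kappa_\pp$) instead of loose $\tfrac1n$-weighted ones; I expect this accounting, more than any single estimate, to be where most of the work lies.
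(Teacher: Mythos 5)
Your plan is correct in substance but follows a genuinely different route from the paper's. The paper never analyzes the true averaged iterate with delayed gradients: it introduces a \emph{virtual} sequence $\hat x^{k+1}=\hat x^k-\frac{\gamma}{n}\sum_{v\in\cI_k}g_v^k$ in which each stochastic gradient is applied at the step where its computation \emph{begins} rather than where it completes. This makes every descent term of the form $\langle\nabla f_v(x_v^k),\cdot\rangle$ with $v\in\cI_k$ --- exactly the points appearing in $F_K$ --- and the entire effect of asynchrony is compressed into the single bound $\NRM{\hat x^k-\bar x^k}\leq\gamma B$ (Lemma~\ref{lem:virtual_iterates}), which holds because at most $n$ gradients are in flight, each of norm at most $B$. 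Your approach keeps the delayed evaluation points $x_v^{k-\tau(k,v)}$ in the descent inequality and pays for them with an explicit delay-drift term, which you control by the counting argument $\sum_{k<K}\sum_{v\in\cI_k}\sum_{j=k-\tau(k,v)}^{k-1}\nl\leq n\NK$. That inequality is valid (each node has at most one unfinished gradient covering any given step $j$) and it is the \emph{same} combinatorial fact that powers the paper's $\gamma B$ bound; so the two proofs are dual views of the perturbed-iterate idea, and both correctly avoid any dependence on $\taum$. The remaining bookkeeping (consensus via \Cref{lem:consensus_control}, the second-moment term, the Young/Cauchy--Schwarz balancing that produces $\sqrt{p_{\max}}\brhom$ in the heterogeneous case) matches the paper's.

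What your route buys is concreteness (no auxiliary sequence); what it costs is the reindexing step you flag yourself, which the paper gets for free. Your descent sum is over start-time iterates while $F_K$ is over completion-time iterates; per node these multisets differ by exactly two boundary terms (the initial point $x^0$, which can be dropped since $f(x^0)-f^\star\geq0$ enters with a favorable sign, and the post-last-update state whose gradient is still in flight at horizon $K$). The latter $n$ terms each carry weight $\frac{2\gamma}{n}$ and are bounded via Lipschitzness by $B\NRM{x_v^{\cdot}-x^\star}$, which the telescoped potential controls; after dividing by $\frac{2\gamma}{n}\NK$ they contribute $\cO(nBD/\NK)$, dominated by $\sqrt{B^2D^2n\brhom/\NK}$ once $\NK\geq n$. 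So the gap is closable, but it is genuine extra work your write-up defers, and it is precisely the step the virtual-sequence formulation was designed to eliminate.
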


We thus recover the well-known rate of minibatch SGD for convex-Lipschitz losses, by setting $\bar\rho=1$ and $\nk=n$, leading to the optimal rate $\cO(\sqrt{B^2D^2/K})$ \citep{nemirovskyyudin1983}.
Asynchronous SGD has also been studied under such assumptions, with the rate $\cO(\sqrt{B^2D^2n/K})$ that we recover here ($\bar\rho=1$ and $\nk=1$) \citep{asynchronous_sgd_mischenko}, that is minmax optimal \citep{woodworth2018graph}.
No rates under the given assumptions existed for Decentralized (local) SGD, that thus exhibits a rate of $\cO(\sqrt{B^2D^2\brhom/K})$.
Finally, adding the sampling assumption not only enables to handle heterogeneous functions, but also leads to improved rates: for well balanced weights ($p_v\approx \bar p$ and $\kappa_\pp\approx 1$) we have $n\sqrt{\bar p}\brhom$ instead of $n\brhom$, which can improve the rate by a factor $1/\sqrt{n}$ if $\cO(1)$ agents compute at the same time, which is usually the case in the asynchronous setting. This phenomenon (better rates under the sampling assumption) appears in all our other rates below.

\begin{theorem}[Lipschitz-smooth-convex rate]\label{thm:lip-smooth-conv}
    Assume that $f$ is convex, for almost all $\xi\sim\cD$, $F(\cdot,\xi)$ is $B$-Lipschitz for some $B>0$, $f_v$ is $L$-smooth, \Cref{hyp:noise_variance} holds, and let $D^2\geq \NRM{ x_0- x^\star}^2$.
        In the \textbf{homogeneous} setting, 
        \begin{align*}
            &\esp{f\left(\frac{1}{\NK}\sum_{k=0}^{K-1} \sum_{v\in\cI_k} x_{v}^k\right)-f( x^\star)}\\
            &=\cO\left( \frac{L\bar\rho^{-1} nD^2}{ \NK}\sqrt{\frac{\sigma^2B^2}{\NK}}\right.\\
            &\quad+ \left.\left(\!\frac{D^2n\sqrt{L\left(B^2+\bar\rho^{-1}\sigma^2\right)}}{\NK}\right)^{\frac{2}{3}}\!\!\right)
        \end{align*}
\end{theorem}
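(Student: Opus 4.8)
The plan is to follow the virtual-iterate philosophy used for AGRAF SGD and combine it with the standard descent-lemma argument for smooth convex functions, paying for the consensus error via the ergodic mixing assumption. First I would introduce the averaged virtual sequence $\bar x^k = \frac{1}{n}\one\one^\top \xx^k$ and track its evolution: from \eqref{eq:agraf} and $W_k\one=\one$ one gets $\bar x^{k+1} = \bar x^k - \frac{\gamma}{n}\sum_{v\in\cI_k} \nabla F_v(x_v^{k-\tau(k,v)},\xi_v^{k-\tau(k,v)})$. I would then write the one-step identity $\NRM{\bar x^{k+1}-x^\star}^2 = \NRM{\bar x^k - x^\star}^2 - 2\gamma\langle \frac{1}{n}\sum_{v\in\cI_k}\nabla F_v(\cdot),\bar x^k - x^\star\rangle + \gamma^2 \NRM{\frac{1}{n}\sum_{v\in\cI_k}\nabla F_v(\cdot)}^2$. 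The gradient-norm term is controlled by $B$-Lipschitzness (each $\NRM{\nabla F_v}\le B$), giving a bound of order $\gamma^2 B^2 |\cI_k|^2/n^2$ — but since in the homogeneous case $\cI_k$ may be all of $\cV$ this must be handled with the $\nk$ bookkeeping exactly as in Theorem~\ref{thm:lip-conv}; summing over $k$ contributes the $\NK$ normalization.

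Next I would handle the inner-product (descent) term. Because the gradients are evaluated at delayed, non-consensual points $x_v^{k-\tau(k,v)}$ rather than at $\bar x^k$, I would split the error into (a) a delay error $\bar x^{k-\tau}-\bar x^k$, bounded using $\NRM{\bar x^{k+1}-\bar x^k}\le \gamma B \nk/n$ telescoped over the delay window, and (b) a consensus error $x_v^{k-\tau} - \bar x^{k-\tau}$, which is exactly what Assumption~\ref{hyp:consensus} controls. For (b) I would prove a recursion on $\Xi_k := \esp{\NRM{\xx^k - \bar\xx^k}^2}$: applying $W^{(k:k+k_\rho)}$ contracts it by $(1-\rho)^2$ over $k_\rho$ steps, while each gradient injection adds at most $\gamma^2 B^2 \nk$ (Lipschitz), so after unrolling, $\Xi_k = \cO(\gamma^2 B^2 \bar\rho^{-2})$-type bounds hold on average; here $\bar\rho = \frac{e-1}{e}\frac{\rho}{k_\rho}$ is the natural effective rate. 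Using $L$-smoothness together with convexity (the standard $\langle \nabla f(\bar x^k), \bar x^k - x^\star\rangle \ge f(\bar x^k)-f^\star$ inequality, plus $\NRM{\nabla f_v(y)-\nabla f_v(x)}\le L\NRM{y-x}$ to pass from the delayed/consensual points to $\bar x^k$) lets me absorb the cross terms: the $\sigma^2$-variance assumption enters when replacing stochastic gradients by true gradients, contributing $\gamma^2\sigma^2/n$ per active node and a $\gamma L \cdot (\text{consensus}+\text{delay error})$ term. Telescoping the resulting inequality $\esp{f(\bar x^k)-f^\star} \le \frac{1}{2\gamma}(\Xi'_k - \Xi'_{k+1}) + \cO(\gamma B^2 \nk/n) + \cO(\gamma L \bar\rho^{-1}(\gamma B + \gamma\bar\rho^{-1}\cdot\ldots))$ over $k=0,\dots,K-1$, dividing by $\NK$, and using Jensen's inequality on the weighted average $\frac{1}{\NK}\sum_k\sum_{v\in\cI_k}x_v^k$ (which by smoothness and consensus control is close to $\frac{1}{\NK}\sum_k \nk\bar x^k$) gives $F_K \le \frac{D^2}{\gamma \NK} + \gamma\,(\text{const}) + \gamma^2\,(\text{const})$.

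Finally I would optimize over the constant stepsize $\gamma$. The bound has the schematic form $F_K \le \frac{D^2}{\gamma\NK} + A\gamma + C\gamma^2$ with $A \asymp \frac{n}{\NK}\sqrt{L\bar\rho^{-1}\sigma^2 B^2}$-type and $C \asymp \frac{n}{\NK}L\bar\rho^{-1}(B^2+\bar\rho^{-1}\sigma^2)$-type coefficients (the $\sqrt{\sigma^2 B^2}$ in the $A$-term coming from the interaction of the noise term with the Lipschitz-bounded gradient-norm term, via an AM-GM split). Balancing $\frac{D^2}{\gamma\NK}$ against $A\gamma$ gives the first reported term $\cO\big(\frac{L\bar\rho^{-1}nD^2}{\NK}\sqrt{\sigma^2 B^2/\NK}\big)$ after substituting $A$ and simplifying; balancing against $C\gamma^2$ gives the second, $\cO\big((\frac{D^2 n\sqrt{L(B^2+\bar\rho^{-1}\sigma^2)}}{\NK})^{2/3}\big)$, and taking the max (equivalently, the $\gamma$ that is the smaller of the two candidate choices) yields the stated rate.

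I expect the main obstacle to be step~(b): getting a clean, delay-free bound on the consensus error $\esp{\NRM{\xx^k - \bar\xx^k}^2}$ that only involves $\bar\rho^{-1}$ and the per-step Lipschitz injection $\gamma^2 B^2 \nk$, without picking up any dependence on $\tau_{\max}$. The subtlety is that the mixing in Assumption~\ref{hyp:consensus} acts over windows of length $k_\rho$ and only in conditional expectation, so the injected gradient errors over a full window must be carefully accumulated and then geometrically summed; coupling this with the delay terms — which are themselves bounded in terms of $\NRM{\bar x^{k+1}-\bar x^k}$ and hence feed back into the same Lipschitz budget — without a circular argument is the delicate bookkeeping. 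This is precisely where the paper's reformulation of AGRAF SGD as the clean recursion \eqref{eq:agraf} pays off, since it lets the window-mixing estimate be applied mechanically to the matrix products $W^{(k:k+k_\rho)}$.
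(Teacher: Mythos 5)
Your overall architecture (one-step expansion of a squared distance, a consensus term controlled through \Cref{hyp:consensus} by a geometric sum, absorption of gradient norms via smoothness, and a final $\frac{D^2}{\gamma\NK}+A\gamma+C\gamma^2$ optimization) matches the paper's proof, and your treatment of the consensus error --- the part you flag as the main obstacle --- is in fact the unproblematic part: it is exactly \Cref{lem:consensus_control}, obtained by a weighted Cauchy--Schwarz over the products $W^{(m:k)}$ with weights $(1-\bar\rho)^{k-m}$, and it yields $\sum_{k<K}\E\NRM{\xx^k-\bar\xx^k}^2\le \frac{2\gamma^2B^2}{\bar\rho^2}\NK$ with no delay dependence and no circularity.

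The genuine gap is in your step (a). You track the true average $\bar x^k$, whose update involves the \emph{delayed} gradients $\nabla F_v(x_v^{k-\tau(k,v)},\cdot)$, and you propose to pay for the delay by telescoping $\NRM{\bar x^{k+1}-\bar x^k}\le \gamma B\nk/n$ over the delay window. That telescoping gives $\NRM{\bar x^{k-\tau(k,v)}-\bar x^k}\le \frac{\gamma B}{n}\sum_{\ell=k-\tau(k,v)}^{k-1}\nl$, a quantity that scales with the (compute) delay and therefore injects a $\tau_{\max}$-type factor into the rate --- precisely the dependence that \Cref{thm:lip-smooth-conv} does not have (contrast with \Cref{thm:smooth-conv}, where, absent Lipschitzness, a $\sqrt{n\taum}$ term does survive). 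The missing idea is the virtual sequence $\hat x^k$ of \Cref{sec:virtual_sequence}: one applies each gradient to the virtual iterate at the step where its computation \emph{starts}, so that the descent analysis is carried out with undelayed arguments $x_v^k$, and the only price is $\hat x^k-\bar x^k=-\frac{\gamma}{n}\sum_{v\in\cV\setminus\cI_k}g^{\prev(k,v)}$, a sum of at most $n$ in-flight gradients, hence $\NRM{\hat x^k-\bar x^k}\le\gamma B$ uniformly in the delays (\Cref{lem:virtual_iterates}). The paper then splits the inner product into the convexity term $T_1$, the consensus term $T_2^k$ (your part (b)), and the virtual-gap term $T_3$ bounded by $\gamma^2B^2\nk/n$ times a smoothness factor, and absorbs the resulting $\cO(\gamma L)\sum_{v\in\cI_k}(f(x_v^k)-f(x^\star))$ terms into the left-hand side for $\gamma\lesssim \frac{1}{L(1+\brhom)}$. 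Without the virtual sequence, your route does not deliver the delay-free statement.
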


For Lipschitz-smooth functions, setting $\brhom=1$ and $\nk=1$, we recover the exact same rates as Asynchronous SGD under arbitrary delays, recently derived by \cite{asynchronous_sgd_mischenko,koloskova2022sharper}, and that do not depend on any upper bound on the delays.
These rates are thus extended to the more general AGRAF SGD algorithm.

\begin{theorem}[Smooth-convex]\label{thm:smooth-conv}
    Assume that $f$ is convex, all $f_v$ are $L$-smooth, and let $D^2\geq \NRM{ x_0- x^\star}^2$.
    
         \textbf{1.} In the \textbf{homogeneous} setting, 
        \begin{align*}
            &\esp{f\left(\frac{1}{\NK}\sum_{k=0}^{K-1}\sum_{v\in\cI_k} x_{v}^k\right)-f( x^\star)}\\
            & =\cO \left( \frac{LD^2 (n\brhom+\sqrt{n\tau_{\max}})}{\NK} + \sqrt{\frac{D\sigma^2}{\NK}}\right.\\
            &\quad\left.+ \left[  \frac{D^2\sqrt{L\sigma^2n^2\brhom }}{\NK} \right]^{2/3} \right)\,,\vspace{-5pt}
        \end{align*}
        where $\tau_{\max}\geq \sup_{k<K,v\in\cV} \sum_{\ell=k}^{\tau(k+1,v)} \nl$ is an upper bound on the maximal compute delay.
        
        \textbf{2.} In the \textbf{heterogeneous} setting, 
        \begin{align*}
            &\esp{f\left(\frac{1}{K}\sum_{k<K} \bar x^k\right) -f(x^\star)}\\
            & = \cO\left( \frac{LD^2\sqrt{\kappa_\pp}\left(\frac{1}{\bar p}  +  (\bar\rho\sqrt{\bar p})^{-1}  \right)}{K}  + \sqrt{\frac{D^2(\sigma^2+\zeta^2)}{n\bar p K}} \right.\\
            &\left.+ \left[ \frac{D^2\sqrt{L\sigma^2p_{\max}\bar\rho^{-1} + L\zeta p_{\max}\bar\rho^{-2}}}{\bar p K}  \right]^{\frac{2}{3}}   \right) \,.\vspace{-5pt}
        \end{align*}
%
\end{theorem}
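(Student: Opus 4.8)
The plan is to adapt the \emph{virtual-iterate} technique used for Asynchronous SGD \citep{mania2017perturbed,koloskova2022sharper,asynchronous_sgd_mischenko} to the decentralized setting, by tracking the network average $\bar x^k=\frac1n\one^\top\xx^k$. Averaging \eqref{eq:agraf} and using that the $W_k$ preserve averages ($W_k\one=\one$), one gets $\bar x^{k+1}=\bar x^k-\frac\gamma n\sum_{v\in\cI_k}\nabla F_v(x_v^{k-\tau(k,v)},\xi_v^{k-\tau(k,v)})$. In the homogeneous setting ($f_v\equiv f$) this is an SGD-like recursion on $f$ whose gradients are (a) evaluated at the local copies $x_v^{\cdot}$ rather than at $\bar x^k$, and (b) delayed; in the heterogeneous setting, since $(\one_{v\in\cI_k})_k$ is i.i.d.\ with $\proba{v\in\cI_k}=p_v=\beta q_v$, one has $\E[\frac1n\sum_{v\in\cI_k}\nabla f_v(x)\mid\cF_k]=\bar p\,\nabla f(x)$, so $\bar x^k$ behaves like SGD on $f$ with effective stepsize $\gamma\bar p$. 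To handle the delays I would introduce a virtual sequence $\hat x^k$ absorbing the gradients still ``in flight'' at step $k$, reindexing gradients into \emph{computation order} so that $\hat x^{k+1}=\hat x^k-\frac\gamma n\sum_{v\in\cI_k}\nabla F_v(x_v^k,\xi_v^k)$, at the price of controlling $\NRM{\hat x^k-\bar x^k}^2$.

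The core is a one-step descent inequality for $\NRM{\hat x^{k+1}-x^\star}^2$. Taking conditional expectations and using convexity of $f$ (replacing the inner-product argument $\hat x^k-x^\star$ by $x_v^k-x^\star$ via Young's inequality, which costs $\sum_vq_v\NRM{x_v^k-\hat x^k}^2$) produces a progress term $\sum_{v\in\cI_k}\E[f(x_v^k)-f^\star]$ in the homogeneous case (resp. $\bar p\,\E[f(\bar x^k)-f^\star]$ plus heterogeneity corrections in the heterogeneous case, after using $L$-smoothness to pass from $\sum_vq_vf_v(x_v^k)$ to $f(\bar x^k)$). The error terms are: (a) stochastic-gradient variance $\lesssim|\cI_k|\sigma^2$, using independence of $\xi_v^k$ from the past (\Cref{hyp:noise_variance}) and the co-coercivity bound $\NRM{\nabla f(x)}^2\le 2L(f(x)-f^\star)$ (with a $\zeta^2$-variant in the heterogeneous case); (b) the consensus error $\Xi^k:=\sum_vq_v\NRM{x_v^k-\bar x^k}^2$; (c) the delay/drift error, i.e. $\NRM{\hat x^k-\bar x^k}^2$ together with the gap between the evaluation points and $\bar x^k$, which telescopes into $\gamma^2$ times a windowed sum of squared gradients over the delay horizon --- this is where $\tau_{\max}\ge\sup_{k<K,v}\sum_{\ell=k}^{\tau(k+1,v)}\nl$ (the number of updates performed during any single delayed computation) enters in the homogeneous case, while in the heterogeneous case the i.i.d.\ structure of $(\one_{v\in\cI_k})_k$ lets one take expectations over the schedule and replace $\tau_{\max}$ by the averaged quantities $\bar p,p_{\max},\kappa_\pp$; and (d) in the heterogeneous case the population variance $\zeta^2$ from \Cref{hyp:hetero} and the mismatch factor $\kappa_\pp=p_{\max}/\bar p$ between uniform averaging and the sampling weights.

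The consensus error is controlled separately via the ergodic-mixing \Cref{hyp:consensus}. From \eqref{eq:agraf} and $W_k\bar\xx^k=\bar\xx^k$ one has $\xx^{k+1}-\bar\xx^{k+1}=W_k(\xx^k-\bar\xx^k)-\gamma(\gg^k-\bar\gg^k)$; iterating over a window of $k_\rho$ steps and applying \eqref{eq:hyp_consensus} yields $\E[\Xi^{k+k_\rho}\mid\cF_k]\le(1-\rho)^2\Xi^k+C\gamma^2\sum_{j=k}^{k+k_\rho-1}\E\NRM{\gg^j}^2$, and summing the resulting geometric recursion over $k$ gives $\sum_{k<K}\E\,\Xi^k\lesssim\bar\rho^{-2}\gamma^2\sum_{k<K}\E\NRM{\gg^k}^2$, with $\E\NRM{\gg^k}^2\lesssim\sum_{v\in\cI_k}(\sigma^2+L\,\E[f(x_v^{\cdot})-f^\star]+\zeta^2)$ coupling this back into the progress terms. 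Substituting the consensus and drift bounds into the summed one-step inequality, telescoping $\NRM{\hat x^k-x^\star}^2$, and choosing $\gamma$ small enough that all $\gamma$- and $\gamma^2$-order error terms are absorbed into half of $\sum_{v\in\cI_k}\E[f(x_v^k)-f^\star]$ (this forces $\gamma\lesssim(L(n\bar\rho^{-1}+\sqrt{n\tau_{\max}}))^{-1}$ in the homogeneous case, resp. the $\bar p$-, $\kappa_\pp$-dependent upper bound in the heterogeneous case), one obtains $\frac1{\NK}\sum_{k<K}\sum_{v\in\cI_k}\E[f(x_v^k)-f^\star]\lesssim\frac{D^2}{\gamma\NK}+\gamma\cdot(\text{variance})+\gamma^2\cdot(\text{smoothness}\times\text{noise})$. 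Jensen's inequality moves the average inside $f$, and optimizing over $\gamma$ subject to its upper bound gives the three-term rate: the $\frac{D^2}{\gamma_{\max}\NK}$ piece yields the $LD^2(n\bar\rho^{-1}+\sqrt{n\tau_{\max}})/\NK$ term, balancing $\frac{D^2}{\gamma\NK}$ against $\gamma\sigma^2$ yields the $\sqrt{\cdot}$ term, and against $\gamma^2L\sigma^2n^2\bar\rho^{-1}$ yields the $(\cdot)^{2/3}$ term (analogously, with $\bar p$, $\kappa_\pp$, $\zeta^2$ in the heterogeneous case, where the output is the running average $\frac1K\sum_{k<K}\bar x^k$ since the progress there is phrased in terms of $f(\bar x^k)$).

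The main obstacle is the simultaneous, mutually dependent control of consensus error, delay/drift error, and optimization progress: each of the three bounds feeds squared-gradient-norm terms into the others, and closing the loop --- verifying that for a single constant stepsize all error terms can be absorbed into a constant fraction of the progress term --- requires careful constant tracking through the window of size $k_\rho$. A secondary difficulty is the genuinely unbounded, adversarially-ordered delays: the virtual-iterate bookkeeping must bound the number and magnitude of in-flight gradients, which in the Lipschitz-free smooth-convex case is exactly where the $\tau_{\max}$ factor appears in the homogeneous rate, whereas the heterogeneous analysis must instead exploit the i.i.d.\ structure of the computation schedule to take expectations over it and recover delay-free, averaged quantities.
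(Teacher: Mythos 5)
Your proposal is correct and follows essentially the same route as the paper's proof: the same virtual sequence $\hat x^k$ in computation order, the same one-step expansion of $\NRM{\hat x^{k+1}-x^\star}^2$ with convexity, consensus, and drift terms, the same mechanism by which $\tau_{\max}$ enters (reindexing the windowed sum of squared delayed gradients in the homogeneous case) and is avoided (taking expectation over the i.i.d.\ schedule, where the geometric waiting time with mean $1/p_v$ cancels the Bernoulli $p_v$, in the heterogeneous case), and the same final stepsize tuning. The only cosmetic difference is in the consensus control, where you iterate the mixing inequality over windows of $k_\rho$ while the paper unrolls the full sum and applies a weighted Cauchy--Schwarz; both yield the same $\gamma^2\bar\rho^{-2}$ scaling.
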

Removing the Lipschitz assumption, we are still able to recover and extend the rates of Asynchronous SGD with \textbf{constant} stepsizes. Note that under no sampling assumption, this rate depends on $\sqrt{n\tau_{\max}}$ instead of $n$ as in the previous two theorems; however, this dependency is still better than depending on $\tau_{\max}$ since we always have $\tau_{\max}\geq n$. We expect to be able to remove this dependency by the use of varying stepsizes as was done for Asynchronous SGD (where stepsizes scale as $1/(L\tau(k))$, inversely proportional to the actual delay). However, such stepsizes cannot be used in a fully decentralized setting, since a given node cannot be aware of the iteration counter $k$ and thus of the delay $\tau(k)$.
Note also that in the sampling case, we have $\esp{\NK}= n\bar p K$, so that the statistical rate is still reached.
These comments also applies to the \textbf{non-convex and smooth setting} below,
for which we fall back to showing that the algorithm will find an approximate first-order stationary point of the objective.
We recover, as in the convex-smooth case just above, the exact same rates as \cite{koloskova2020unified} for Decentralized (local) SGD.

\begin{theorem}[Non-convex and smooth rates]\label{thm:smooth-nonconv}
    Assume that the functions $f_v$ are $L$-smooth.
    
        \textbf{1.}  In the \textbf{homogeneous} setting, 
        \begin{align*}
            &\esp{\frac{1}{\NK}\sum_{k < K} \nk \norm{\nabla f(\bar x^k)}^2}\\ 
            &=\cO\left( \frac{L F_0(\sqrt{n \tau_{\max}}\!+\! n \bar\rho^{-1})}{K}+\left(\frac{L \sigma^2 F_0}{K} \right)^{\frac{1}{2}}\right.\\
            &\quad \left.+ \left(\frac{L \sigma n F_0}{K \sqrt{\bar \rho}}\right)^{\frac{2}{3}} \right)\,.
        \end{align*}
        \textbf{2. }In the \textbf{heterogeneous} setting, 
        \begin{align*}
        &\esp{\frac{1}{K}\sum_{k < K}  \norm{\nabla f(\bar x^k)}^2}\\
            &=\cO\left( \frac{L F_0\sqrt{\kappa_\pp}(\frac{1}{\bar p}+ (\bar\rho\sqrt{\bar p})^{-1})}{K}+\left(\frac{L (\sigma^2+\zeta^2) F_0}{K} \right)^{\frac{1}{2}} \right.\\
            &\left. + \left(\frac{L n F_0\sqrt{\sigma^2p_{\max}\brhom + \zeta^2p_{\max}\bar\rho^{-2}}}{ K}\right)^{\frac{2}{3}} \right)\,.\vspace{-5pt}
        \end{align*}

    
\end{theorem}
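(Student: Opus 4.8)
The plan is to marry the virtual-iterate technique of asynchronous SGD with the consensus-error control of decentralized SGD, working throughout with the network-averaged model $\bar x^k=\frac1n\one\one^\top\xx^k$. Since $W_k\one=\one$, applying $\frac1n\one\one^\top$ to \eqref{eq:agraf} annihilates the communication matrices and yields the clean recursion $\bar x^{k+1}=\bar x^k-\frac{\gamma}{n}\sum_{v\in\cI_k}\nabla F_v(x_v^{k-\tau(k,v)},\xi_v^{k-\tau(k,v)})$. Two obstructions prevent a direct descent argument on $f(\bar x^k)$: (a) gradients are evaluated at the stale local iterates $x_v^{k-\tau(k,v)}$ rather than at $\bar x^k$, and (b) those local iterates differ from $\bar x^k$. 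To handle (a) I would introduce a virtual sequence $\tilde x^k$, the averaged-model analogue of the async-SGD virtual iterate, obtained by re-charging each gradient to the step at which its computation \emph{starts} rather than the step at which it finishes: $\tilde x^k=\bar x^k-\frac{\gamma}{n}\sum_{(v,j)\in\mathrm{flight}(k)}\nabla F_v(x_v^j,\xi_v^j)$, where $\mathrm{flight}(k)$ collects the computations in progress at step $k$. Then $\tilde x^{k+1}=\tilde x^k-\frac{\gamma}{n}\sum_{v}\one\{v\text{ starts at }k\}\,\nabla F_v(x_v^{k},\xi_v^{k})$, a recursion whose gradient arguments carry no delay.

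Next I would apply the $L$-smooth descent inequality to $f$ along $\tilde x^k$, take conditional expectations, and expand the resulting inner product $\esp{\lr{\nabla f(\tilde x^k),\,\tfrac1n\sum_v\nabla f_v(x_v^k)}}$ around $\nabla f(\bar x^k)$, paying the errors $\esp{\NRM{\tilde x^k-\bar x^k}^2}$ and $\sum_v\esp{\NRM{x_v^k-\bar x^k}^2}$. In the heterogeneous setting the i.i.d. sampling with $\pp=\beta\qq$ gives $\esp{\sum_{v\in\cI_k}\nabla f_v(x)}=\beta\nabla f(x)$, which (together with the population-variance bound \eqref{eq:pop_variance}) turns the expected update direction into a genuine descent term $-c\gamma\bar p\,\NRM{\nabla f(\bar x^k)}^2$ plus controllable remainders; in the homogeneous setting $f_v\equiv f$ makes this automatic. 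Telescoping over $k<K$ then bounds $\frac{1}{\NK}\sum_{k<K}\nk\NRM{\nabla f(\bar x^k)}^2$ (resp. $\frac1K\sum_{k<K}\NRM{\nabla f(\bar x^k)}^2$) by $\frac{LF_0}{\gamma\cdot(\text{effective count})}+(\text{noise term})\cdot L\gamma+(\text{consensus/delay terms})$, where $F_0\ge f(\bar x^0)-f^\star$.

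It remains to estimate the consensus error $\Xi_k:=\esp{\NRM{\xx^k-\bar\xx^k}^2}$ and the virtual–average gap $\esp{\NRM{\tilde x^k-\bar x^k}^2}$. For $\Xi_k$, I would unroll \eqref{eq:agraf} over a window of $k_\rho$ steps, contract the pre-window consensus error by the factor $(1-\rho)^2$ via \Cref{hyp:consensus}, and bound the gradient mass injected over the window by $\gamma^2$ times $\sum_\ell\nl\NRM{\nabla f(\bar x^\ell)}^2$ plus $\gamma^2(\sigma^2+\zeta^2)$-type terms; solving the resulting geometric recursion gives $\sum_k\Xi_k\lesssim\gamma^2\bar\rho^{-2}\big(\sum_k\nk\NRM{\nabla f(\bar x^k)}^2+(\sigma^2+\zeta^2)\cdot(\text{count})\big)$, exactly as in \cite{koloskova2020unified} but with the asynchronous bookkeeping in which only $v\in\cI_k$ contributes at step $k$. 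For $\esp{\NRM{\tilde x^k-\bar x^k}^2}$, the difference is $\gamma/n$ times a sum of at most $\sum_{\ell}\nl$-over-the-delay-window gradients, so in the homogeneous case it is controlled by $\tau_{\max}$ — this is precisely where the $\sqrt{n\tau_{\max}}$ term enters — while in the heterogeneous case the i.i.d. structure of $(\cI_k)$ lets one replace $\tau_{\max}$ by its expectation, producing the $\sqrt{\kappa_\pp}(\tfrac1{\bar p}+(\bar\rho\sqrt{\bar p})^{-1})$ dependence instead. Finally I would impose $\gamma\le c\cdot\min\{1/L,\ \bar\rho/(L\sqrt{n}),\ \dots\}$ (plus a $\tau_{\max}$-dependent cap in the homogeneous case) to absorb the feedback terms into the left-hand side, then pick the constant, horizon-dependent $\gamma$ balancing the $1/K$, $1/\sqrt K$ and $1/K^{2/3}$ contributions by the standard three-term SGD tuning.

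The main obstacle I anticipate is the coupled accounting of the two error sources: the consensus bound involves $\sum_k\nk\NRM{\nabla f(\bar x^k)}^2$, the very quantity being bounded, and the virtual-iterate gap involves gradients summed over delay windows; the stepsize must be taken small enough — in terms of $L$, $\bar\rho$, and, for the homogeneous statement only, $\tau_{\max}$ — that both feedback terms are reabsorbed, while ensuring the delay influences only the lower-order $1/K$ term and the dominant statistical term keeps the optimal $\sigma^2$ (and, in the heterogeneous case, $\sigma^2+\zeta^2$) scaling.
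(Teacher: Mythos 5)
Your proposal is correct and follows essentially the same route as the paper's proof: the virtual sequence you call $\tilde x^k$ is exactly the paper's $\hat x^k$ (the averaged iterate shifted by the in-flight gradients), the descent step, the decomposition of the inner product around $\nabla f(\bar x^k)$, the ergodic-mixing consensus bound, the $\tau_{\max}$ accounting in the homogeneous case versus the expectation $\esp{\nex(k,v)-k}=1/p_v$ in the sampled heterogeneous case, and the stepsize-absorption plus three-term tuning all match the paper's argument. No substantive differences to report.
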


\begin{remark}[Heterogeneous without sampling]
    So far, the heterogeneous setting was only considered under a sampling assumption. In fact, generalizing \cite[Theorem 4]{asynchronous_sgd_mischenko} to AGRAF SGD, under both \textbf{heterogeneous functions} with population variance $\zeta^2$ (as in \cref{eq:pop_variance}) and \textbf{arbitrary ordering of the updates}, the exact same rate as \Cref{thm:smooth-nonconv}.1 up to an additional term $\cO(\zeta^2)$ could be obtained.
\end{remark}

\section{Applications}\label{sec:applications}

\subsection{Better rates for Asynchronous Decentralized SGD}

\begin{table*}[t]
\caption{
\footnotesize We compare the number of iterations required to reach the statistical regime $\cO(\sigma^2/\NK)$ (if it is reached) of previous Asynchronous Decentralized SGD works \citep{lian2017asynchronous,bornstein2023swift} with our rates (\Cref{thm:lip-smooth-conv,thm:smooth-conv}). 
\textbf{Strong} communication assumption : \Cref{hyp:consensus} with $k_\rho=1$ and $W_k$ independent from the past; \textbf{Sampling assumption} : $\cI_k=\set{v_k}$ with $v_k$ \textit{i.i.d.} sampled \textit{or} $\P(v\in\cI_k)=p_v$ \textit{i.i.d.} sampled.
$^{{\color{blue}(a)}}$ \cite{bornstein2023swift} reaches $\cO({\color{red}\bar\rho^{-2}}\sqrt{\sigma^2/K})$ instead, after $\cO(n^2\bar\rho^{-4})$ iterations.
}\label{tb:rates}

\vspace*{.15cm}
	
\centering
\begin{threeparttable}
\resizebox{0.95\linewidth}{!}{
\begin{tabular}{c c c c c}
	\toprule[.1em]
	\textbf{Reference}  & \begin{tabular}{c}\footnotesize \textbf{Communication} \\ \footnotesize\textbf{Assumption}\end{tabular} & \begin{tabular}{c}\footnotesize\textbf{Computation}\\ \footnotesize\textbf{Assumption} \end{tabular} & \textbf{Regularity} & \begin{tabular}{c}\textbf{\footnotesize$\#$ iterations before}\\ \footnotesize\textbf{$\sqrt{\frac{\sigma^2}{K}}$ regime} \end{tabular}  \\
	\midrule
	\begin{tabular}{c}\cite{lian2017asynchronous}   \end{tabular} &  \footnotesize \textbf{Strong} & \footnotesize Sampling & \footnotesize Smoothness & $\cO(\max(n^4\bar\rho^{-4},\tau_{\max}^4))$ \\ \cmidrule{1-1}
	\begin{tabular}{c}\cite{bornstein2023swift}  \end{tabular} &  \footnotesize \textbf{Strong}& \footnotesize Sampling & \footnotesize Smoothness & {\color{red} N.A.}$^{{\color{blue}(a)}}$ \\ \cmidrule{1-1}
	\Cref{thm:lip-smooth-conv}.1 & \begin{tabular}{c} \Cref{hyp:consensus}\end{tabular} & \textbf{Arbitrary} & \footnotesize Smooth-\textbf{Lipschitz}, \footnotesize\textbf{Homogeneous}  & $\cO(n^4\bar\rho^{-2})$ \\ \cmidrule{1-1}
	\Cref{thm:smooth-conv}.1 & \begin{tabular}{c} \Cref{hyp:consensus}\end{tabular} & \textbf{Arbitrary} & \footnotesize Smooth, \textbf{Homogeneous} & $\cO(\max(n^4\bar\rho^{-2},n\tau_{\max}))$  \\ \cmidrule{1-1}
	\Cref{thm:smooth-conv}.2 & \begin{tabular}{c} \Cref{hyp:consensus}  \end{tabular} & \footnotesize Sampling & \footnotesize Smoothness & $\cO(n^2\bar\rho^{-4})$ 
	\\
	\bottomrule[.1em]
\end{tabular} }
\end{threeparttable}
\end{table*}

A first direct application of our theory is a better analysis of Asynchronous Decentralized SGD.
Comparing our analysis with those of \cite{lian2017asynchronous,bornstein2023swift}, we highlight that our work handles arbitrary computation orders and delays in the homogeneous settings, as opposed to \cite{lian2017asynchronous,bornstein2023swift} that are only valid for $k_\rho=1$ in \Cref{hyp:consensus} (which means that at any step $k$, conditionally on the current state, the graph of edges that can be sampled must be connected) and under a sampling assumption.
In both homogeneous and heterogeneous cases, our communication assumptions are much less restrictive. Furthermore, under similar computation and regularity assumptions as \cite{lian2017asynchronous,bornstein2023swift} (sampling and smooth losses, see last line of \Cref{tb:rates}), our convergence bound (\Cref{thm:smooth-conv}.2) reaches a statistical rate $\sqrt{\sigma^2/\NK}$ after $\NK=\cO(n^2\bar\rho^{-4})$, while \cite{bornstein2023swift} does not reach such a statistical rate and \cite{lian2017asynchronous} reaches this rate after $K=\cO(\max(n^4\bar\rho^{-4},\tau_{\max}^4))$ iterations. For the sake of comparison, we take $\bar p$ of order $1$ in our rates.

\subsection{Asynchronous Decentralized SGD on Loss Networks}\label{sec:LN}

The previous considerations and the AGRAF SGD rates hold as long as there is no communication delay. The following question then arises: given a communication graph $G=(\cV,\cE)$ with communication delays $\tau_\edgevw$ and computation delays $\tau_v$ for $v\in\cV$ and $\edgevw\in\cE$, can we reverse-engineer and build communication/computation schemes that fit in the AGRAF SGD framework and that do not break the communication and computation constraints? Can we analyze such a scheme and prove that it mixes well (in the sense that \Cref{hyp:consensus} holds, for explicit values of $\rho,k_\rho$) ?

\myparagraph{Overview of the Loss Network scheme.}
Starting with $\cI_k=\set{v_k,w_k}\in\cE$, and communication matrices $W_k$ corresponding to an averaging along the edge $\set{v_k,w_k}$ as a baseline (\textit{i.e.}, $W_k=I_\cV-\frac{(e_{v_k}-e_{w_k})(e_{v_k}-e_{w_k})^\top}{2}$ where $(e_v)$ is the canonical basis of $\R^\cV$) as a baseline,  choosing a sequence $\cI_k$ such that there is no induced communication delays becomes tricky. While assuming that $\set{v_k,w_k}$ is sampled independently from the past with fixed probability \citep{lian2017asynchronous} is amenable for the analysis (since then \Cref{hyp:consensus} directly holds for $k_\rho=1$), this can incur communication delays if for instance the same node is sampled in two consecutive updates.

To alleviate this issue, we remove the independence between sampled edges in the following way: we impose that nodes that are already involved in a communication are tagged as \textit{busy}, and that busy nodes cannot be involved in new communications. Then once a node finishes a computation, it can then choose a new neighbor (\textit{who is not busy}) to start communicating with. Doing so, the induced communication matrices are no longer independent, as they follow a Markov process.
This scheme is inspired by \textbf{Loss-Networks}, introduced in \citep{lossnetworks1991kelly} to model telecommunication networks, in which an edge in the graph models a phone communication that can happen; since a phone cannot make several calls in parallel, once involved in a communication with some neighboring node it cannot be called by another neighbor while it is busy; this is exactly the same process we use, phone calls being replaced by model communications. 

\myparagraph{How to schedule such a process ?}
If nodes start a new communication right after they finish their last one, the process can end up in deadlock and thus does not mix at all: this is for instance the case on the cycle or line graphs with an even number of nodes \citep{lossnetworks1991kelly}.
We thus need to introduce some randomness and some waiting times.
We proceed as follows and use exponential random waiting times as in \citet{lossnetworks1991kelly}.

\textbf{(i)} Once a node $v$ finishes a communication, it waits a time $T_v\sim\mathrm{Exp}(p_v)$ (exponential random variable, of intensity $p_v$). 

\textbf{(ii)} If $v$ is still not busy after this waiting time, $v$ samples some neighboring node $w\sim v$ with probability $\frac{p_\edgevw}{p_v}$ to communicate with, for $\sum_{w\sim v}p_\edgevw=p_v$.

\textbf{(iii)} If $w$ is busy, this procedure restarts at \textbf{(i)}, else both $v$ and $w$ become busy and can communicate. Once they are busy, they cannot communicate with other nodes. The communication between $v$ and $w$ consists in averaging local values by setting $x_v,x_w$ to $(x_v+x_w)/2$. When this is done, they each perform a local (eventually delayed) gradient step, and then become \textit{non-busy}. Overall, the $k^{th}$ update reads:
\begin{equation}\label{eq:LN_update}
    x_{v_k}^{k+1} = \frac{x_{v_k}^k + x_{w_k}^k}{2} - \gamma \nabla F_{v_k}\big(x_{v_k}^{k-{\tau(v_k,k)}},\xi_{v_k}^{k-{\tau(v_k,k)}}\big)\,,
\end{equation}
and similarly at node $w_k$.
The procedure described just above (\textbf{(i)-(ii)-(iii)}) to sample pairs of nodes that iteratively perform computations and pairwise communications can be instantiated locally, provided nodes know when their neighbors in the graph are busy --- this can be relaxed by adding some ‘‘busy-checking'' operation.
However, the key challenge here lies in that the communication matrices $(W_k)_{k\geq0}$ induced by the updates \Cref{eq:LN_update} are not independent, and analyzing some form of ergodic mixing time becomes highly non-trivial. Still, using the randomness introduced in this procedure through the exponential waiting times and the sampling of neighbors, we are able to prove that \Cref{hyp:consensus} holds, for values of $\rho,k_\rho$ that depend on the physical delays.

\begin{assumption}[Loss Network assumptions]\label{hyp:LN}
    There exist $\tau_v,\tau_\edgevw\in\R_{>0}$\footnote{$\tau_v,\tau_\edgevw$ are \textbf{physical continuous-time} delays.} for $v\in\cV,\edgevw\in\cE>0$ such that a communication between $v$ and $w$ takes a time at most $\tau_\edgevw$, and computing a stochastic gradient at node $v$ takes a time at most $\tau_v$.
\end{assumption}

\begin{theorem}\label{thm:LN}
    Under \Cref{hyp:LN}, assume that $p_\edgevw=\min\left(\frac{1}{\max_{u\sim v}\tau_\edgeuw'}, \frac{1}{2(\max(d_v,d_w)-1)\tau_\edgevw'} \right)$, where $d_v$ is the degree of node $v$ and $\tau'_\edgevw=\tau_\edgevw+\max(\tau_v,\tau_w)$.
    Let $\Lambda$ be the spectral gap (smallest non-null eigenvalue of the weighted Laplacian) of the graph $G$ with weights $\lambda_\edgevw=\frac{\min_{u\sim\edgevw}p_\edgevw}{d\sum_{e\in\cE}p_e},\,\edgevw\in\cE$, where $d$ is the max degree in the graph.
    Then, \Cref{hyp:consensus} is verified for $\frac{\rho}{k_\rho}=\Tilde\cO(\Lambda)$. 
\end{theorem}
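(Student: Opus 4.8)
The goal is to verify \Cref{hyp:consensus} for the Loss Network scheme, i.e.\ to exhibit $\rho, k_\rho > 0$ with $\rho/k_\rho = \tilde\cO(\Lambda)$ such that, over a window of $k_\rho$ consecutive computation updates, the random product $W^{(k:k+k_\rho)}$ contracts the disagreement $\NRM{\xx - \bar\xx}^2$ by a factor $(1-\rho)^2$ in conditional expectation. The natural strategy is to lower-bound, uniformly over the state of the Markov chain governing which nodes are busy, the probability that within the next $k_\rho$ updates a ``good'' sequence of edge averagings occurs — good in the sense that it realizes, with constant probability, an averaging along every edge of $G$ at least once (or enough of a spanning structure that the standard gossip contraction argument applies). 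Concretely, I would: (i) first bound the physical-time-to-computation-updates conversion, showing that in any physical time interval of length $\Theta(\max_e \tau'_e)$ at least a constant fraction of nodes complete an update, so that a window of $k_\rho = \Theta(n \cdot \text{poly}(\text{graph}))$ computation updates corresponds to enough physical time for the continuous-time loss-network dynamics to do their mixing; (ii) analyze the continuous-time loss network itself, using the exponential clocks of intensity $p_v$ and the neighbor-sampling probabilities $p_{\edgevw}/p_v$, to show that each edge $\edgevw$ is activated (i.e.\ $v$ and $w$ both become non-busy, $v$'s clock rings, it picks $w$, and $w$ is free) at rate at least $\Omega(\lambda_{\edgevw})$ — this is where the specific choice $p_{\edgevw} = \min(\ldots)$ enters, since it guarantees that with constant probability a node is not busy when its clock rings.

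The heart of the argument is step (ii): controlling the correlations between successive communication matrices. Unlike the i.i.d.\ gossip setting of \cite{lian2017asynchronous} where $\esp{W_k}$ directly has a spectral gap, here the $W_k$ follow a Markov chain (the busy/free configuration), and one must argue that the \emph{time-averaged} generator still has spectral gap $\Omega(\Lambda)$. I would do this by a comparison/coupling argument: show that the probability that node $v$ is busy at a ``typical'' time is bounded away from $1$ (using that its total communication-plus-computation time per cycle is $O(\tau'_e)$ while it waits $\mathrm{Exp}(p_v)$ with $p_v$ chosen small enough relative to $1/\tau'_e$), hence conditionally on the past, each edge $\edgevw$ gets averaged in the next $O(1/\lambda_{\edgevw})$ expected updates. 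Summing/combining these edge activations over a window long enough to cover a spanning tree (or invoking a Poincaré-type inequality for the weighted Laplacian with weights $\lambda_{\edgevw}$), one gets that $\NRM{\xx - \bar\xx}^2$ contracts by $1 - \Omega(\Lambda)$ over $k_\rho$ updates, which after the $e/(e-1)$ bookkeeping and taking square roots yields \Cref{eq:hyp_consensus}. The translation between ``expected number of edge activations'' and a deterministic contraction factor $(1-\rho)^2$ would go through a standard argument: condition on the event (of probability $\Omega(1)$) that the good averaging sequence occurs, bound the contraction on that event, and use that averaging is non-expansive on the complement.

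I expect the main obstacle to be the lack of independence: bounding $\esp{\NRM{W^{(k:k+k_\rho)}\xx - \bar\xx}^2 \mid \cF_k}$ uniformly over the busy-configuration at time $k$ requires a worst-case-over-initial-configuration mixing estimate for the loss-network Markov chain, and the exponential waiting times are essential precisely because without them the chain can deadlock (as noted for even cycles). Making the dependence on the graph explicit — getting the clean answer $\rho/k_\rho = \tilde\cO(\Lambda)$ with $\Lambda$ the spectral gap of the Laplacian with the stated weights — will require carefully choosing $k_\rho$ large enough that the worst-case configuration is ``forgotten'' (a burn-in of $O(\max_v 1/p_v + \max_e \tau'_e)$ physical time, translated to updates), and then applying the contraction on the remaining window. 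A secondary technical point is handling ties (several nodes finishing at the same physical time $T_k$) and the atomicity of updates when converting physical-time edge-activation rates into per-update contraction, but this should be routine given the framework already set up in the paper.
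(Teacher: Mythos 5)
Your outer architecture matches the paper's: you use queuing-type availability estimates (a node/edge is free with constant probability because $p_\edgevw$ is chosen small relative to $1/\tau'_\edgevw$), you condition on a high-probability ``good'' event on which enough activations occur, you use non-expansiveness of averaging on the complement, and you insist on a worst-case-over-busy-configuration bound so that the estimate holds conditionally on $\cF_k$. All of this corresponds to the paper's Steps (ii) and (iii) (the $M/M/\infty$ queue lemma, the event $A_k$ with $\P(A_k|\cF_k)\geq 1/2$, and the stochastic bootstrapping of Theorem 4). One remark on that part: because the good event for a window is only measurable at the \emph{end} of the window, the product of indicators over successive windows is interdependent, and the paper has to skip every other window and use a Markov-type inequality on the fraction of good windows; your ``condition on the good event and use non-expansiveness otherwise'' would need this extra care, but that is repairable.

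The genuine gap is in the step you treat as routine: passing from ``every edge $\edgevw$ is activated within the window, at rate proportional to $p_\edgevw$'' to ``$\NRM{\xx-\bar\xx}^2$ contracts by $1-\Omega(\Lambda)$ with $\Lambda$ the spectral gap of the Laplacian weighted by $\lambda_\edgevw$.'' This is not the standard gossip/Poincar\'e argument, because the contraction contributed by activating $\edgevw$ at time $s$ is measured against the \emph{state at time $s$}, which has already been altered by intervening activations of adjacent edges; a spanning-tree or one-activation-per-edge argument gives no control of these cross-terms and in general does not yield the spectral gap of any natural weighted Laplacian. The paper devotes its entire Step (i) (Theorem 2) to this: it recasts each pairwise averaging as a coordinate gradient step on a dual function $F_A^*$ indexed by edges, introduces the window-averaged Lyapunov function $\Lambda_k(\xx)=\frac{1}{T}\sum_{\ell=k}^{k+T-1}\NRM{W^{(0,\ell)}(\xx-\bar\xx)}^2$, and bounds $\NRM{\nabla_\edgevw F_A^*(\lambda(s))-\nabla_\edgevw F_A^*(\lambda(r))}^2$ in terms of the number $N(\edgeuu,\edgevw,\cdot)$ of activations of neighboring edges between two activations of $\edgevw$. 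It is precisely this bookkeeping that produces the weights $\lambda_\edgevw\propto\min_{u\sim\edgevw}p_{\{u,\cdot\}}/(d\sum_e p_e)$, with the $\min$ over adjacent edges and the $1/d$ penalty; your identification of $\lambda_\edgevw$ with the bare activation rate of $\edgevw$ (which is of order $p_\edgevw$, without the $\min$ or the degree factor) conflates two different quantities and signals that the cross-term control is missing from your plan. Without an argument of this type, the claimed contraction factor $1-\Omega(\Lambda)$ for the stated weights is not established.
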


Given a graph $G$ with physical communication and computation latencies $\set{\tau_v,\tau_\edgevw}$ (\Cref{hyp:LN}), we are thus able to exhibit a communication scheme that satisfies communication and computation constraints, while still fitting in the framework of AGRAF SGD under the assumptions used in our convergence rates.
Crucially, the mixing constant $\Lambda$ explicitly depends on the graph and the delays, through the smallest non-null eigenvalue of the weighted graph Laplacian, with explicit weights $\lambda_\edgevw$ on the edges.
These weights depend on \textbf{local} delays: having straggler nodes or edges do not slow down communication or computations, if there are fast edges/nodes that are dense enough in the graph. 
To further highlight the importance of having weights $\lambda_{\edgevw}$ that only depend on the local delays, this can be put in perspective of Asynchronous SGD, that is proved to depend only on the averaged computation delay $\frac{1}{n}\sum_{v\in\cV}\frac{1}{\tau_v}$ rather than the max delay \citep{koloskova2022sharper,mishchenko2018delay}. For decentralized optimization over a given graph, depending on the averaged communication delays wouldn't make sense since all communication paths need to be taken into account; hence, the counterpart to the mean delay in the graph is a \textbf{weighted} Laplacian, with weights on edge $\edgevw$ that are function of \textbf{local} delays, instead of a max delay which is the asynchronous speedup \citep{even2021delays}.
\myparagraph{Disclaimer.}
The proof of \Cref{thm:LN} is adapted from that of \cite{even2021asynchrony}, an unpublished work by a subset of the authors.

\section*{Conclusion}
We introduced a unifying framework for studying asynchronous and decentralized algorithms; our analysis recovers and improves over that of previous asynchronous decentralized SGD works, while being much more general. The flexibility of our framework furthermore enables us to leverage an asynchronous speedup under communication and computation delays, by the introduction of Loss Networks and new analysis tools, thus providing a non-trivial sampling scheme that still satisfies the ergodic mixing property introduced by~\citet{koloskova2020unified}.

\myparagraph{Aknowledgements.}
M.E. thanks Konstantin Mischenko for initiating discussions and suggesting this subject (asynchronous SGD on graphs) and for all the valuable discussions. A.K. and M.E. also thank Martin Jaggi for interesting discussions.

\bibliographystyle{plainnat}

\bibliography{refs.bib,refsASGD.bib}

\newpage
\onecolumn

\appendix

\section{Equivalence of two ergodic mixing assumptions}

The following assumption is a consequence of \Cref{hyp:consensus}: if \Cref{hyp:consensus} holds for some $\tau,\rho$, then \Cref{hyp:consensus2} holds for $\bar\rho=c\frac{\rho}{\tau}$ where $c$ is some numerical constant. In fact, as we prove in \Cref{prop:equivalence_mixing}, they are both equivalent, but the following proves to be easier to handle in the analysis.

\begin{assumption}\label{hyp:consensus2}
    $W_k\one=\one$ and  there exist $\bar\rho$ such that we have $\forall k,\ell\in\N$ and $\forall  \xx\in\R^\cV$:
\begin{equation}\label{eq:hyp_consensus2}
\begin{aligned}
    &\esp{\NRM{W^{(k:k+\ell)} \xx-\frac{1}{n}\one\one^\top \xx}^2|\cF_k}\\
    &\quad\leq 2(1-\bar\rho)^{2\ell}\NRM{ \xx-\frac{1}{n}\one\one^\top \xx}^2\,.
\end{aligned}
\end{equation}
\end{assumption}
\begin{proposition}\label{prop:equivalence_mixing}
Assumptions~\ref{hyp:consensus} and~\ref{hyp:consensus2} are equivalent, in the following sense.
\begin{enumerate}
    \item If Assumption~\ref{hyp:consensus} holds for some $\rho\in[0,1]$ and for some $k_\rho\in\N^*$, then Assumptions~\ref{hyp:consensus2} holds for $\bar \rho = c\frac{\rho}{k_\rho}$, for $c>0$ some numerical constant.
    \item If Assumption~\ref{hyp:consensus2} holds for some $\bar\rho\in[0,1]$, then Assumption~\ref{hyp:consensus} holds for any $\rho\in(0,1)$ and $k_\rho=\left\lceil\frac{\frac{1}{2}\ln(2) \ln(1-\rho)}{\ln(1-\bar\rho)} \right\rceil$ ($\propto \frac{\rho}{\bar\rho}$ for $\rho,\bar\rho$ small).
\end{enumerate}
\end{proposition}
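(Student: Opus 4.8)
\textbf{Proof plan for Proposition~\ref{prop:equivalence_mixing}.}

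The plan is to treat the two implications separately; the second is essentially a routine interpolation argument, so the real work is in the first. For part (2), suppose Assumption~\ref{hyp:consensus2} holds with parameter $\bar\rho$. Given a target $\rho\in(0,1)$, I would simply choose $k_\rho$ large enough that $2(1-\bar\rho)^{2k_\rho}\leq (1-\rho)^2$, i.e. $k_\rho = \lceil \tfrac12 \ln(2)\,\ln(1-\rho)^{-1}\,/\,\ln(1-\bar\rho)^{-1}\rceil$ up to the sign bookkeeping stated in the proposition; then \eqref{eq:hyp_consensus2} applied with $\ell = k_\rho$ yields \eqref{eq:hyp_consensus} directly, since $\bar\xx = \frac1n\one\one^\top\xx$. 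The asymptotic $k_\rho \propto \rho/\bar\rho$ for small parameters follows from $\ln(1-x)^{-1}\sim x$. Nothing subtle happens here.

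For part (1), the task is to bootstrap a contraction that occurs only over windows of length $k_\rho$ into a genuine geometric rate per step. Write $\xx_\perp := \xx - \bar\xx$ and note that $W^{(k:k+\ell)}\xx - \bar\xx = W^{(k:k+\ell)}\xx_\perp$ (using $W_k\one=\one$, so the consensus space is invariant and the products act on the orthogonal complement). Define $a_\ell = \esp{\NRM{W^{(k:k+\ell)}\xx_\perp}^2 \mid \cF_k}$. Assumption~\ref{hyp:consensus} gives the sub-multiplicative-over-blocks estimate $a_{\ell+k_\rho} \leq (1-\rho)^2 a_\ell$ — formally by the tower property, conditioning on $\cF_{k+\ell}$ and applying \eqref{eq:hyp_consensus} at the shifted index. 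I also need a crude one-step bound: from the block bound with any partial window, or more simply from the fact that over fewer than $k_\rho$ steps one can at worst grow, one should establish $a_\ell \leq C\, a_0$ for all $\ell < k_\rho$ with $C$ a universal constant (this is where a little care is needed — one option is to observe that \eqref{eq:hyp_consensus} with a window that overshoots still controls things, or to invoke that averaging-type $W_k$ are typically nonexpansive; but since the assumption as stated does not literally force $\NRM{W_k\xx_\perp}\le\NRM{\xx_\perp}$, the cleanest route is to absorb the short-window blow-up into the constant $c$ using \eqref{eq:hyp_consensus} itself applied at index $k+\ell-k_\rho$, which is legitimate once $\ell \ge k_\rho$, and to simply restrict attention to $\ell$ a multiple of $k_\rho$ when proving \eqref{eq:hyp_consensus2}, then patch the intermediate $\ell$ by monotonicity of the bound). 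Writing $\ell = m k_\rho + r$ with $0\le r < k_\rho$, iterate the block bound $m$ times to get $a_\ell \leq C (1-\rho)^{2m} a_0 = C(1-\rho)^{-2r/k_\rho}(1-\rho)^{2\ell/k_\rho} a_0 \leq 2 (1-\rho)^{2\ell/k_\rho} a_0$ after folding the bounded factor $C(1-\rho)^{-2r/k_\rho} \le C(1-\rho)^{-2}$ into the constant. Finally set $\bar\rho$ so that $(1-\bar\rho)^{2} = (1-\rho)^{2/k_\rho}$, i.e. $1-\bar\rho = (1-\rho)^{1/k_\rho}$; then $\bar\rho = 1 - (1-\rho)^{1/k_\rho}$, and the elementary inequality $1-(1-\rho)^{1/k_\rho} \geq \tfrac{e-1}{e}\cdot\tfrac{\rho}{k_\rho}$ (valid since $1-e^{-t/k_\rho}\ge \tfrac{e-1}{e}\tfrac{t}{k_\rho}$ for $t\in[0,1]$ by concavity, with $t=\ln(1-\rho)^{-1}\ge\rho$) gives $\bar\rho \geq \tfrac{e-1}{e}\tfrac{\rho}{k_\rho} = \bar\rho$ in the notation fixed after Assumption~\ref{hyp:consensus}, matching the claimed $\bar\rho = c\,\rho/k_\rho$ with $c = (e-1)/e$ (any smaller numerical constant also works).

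The main obstacle is the short-window control, i.e. justifying that $a_\ell$ cannot blow up by more than a universal factor for $\ell < k_\rho$ using only the block contraction \eqref{eq:hyp_consensus} and not any per-step nonexpansiveness. The trick is that one never actually needs a bound for small $\ell$ in isolation: to prove \eqref{eq:hyp_consensus2} it suffices to prove it for $\ell \geq k_\rho$ (for $\ell<k_\rho$ the right-hand side $2(1-\bar\rho)^{2\ell}\NRM{\xx_\perp}^2 \geq 2(1-\bar\rho)^{2k_\rho}\NRM{\xx_\perp}^2$... — no, that's the wrong direction), so instead one writes, for $\ell\ge k_\rho$, $W^{(k:k+\ell)} = W^{(k+\ell-k_\rho:k+\ell)} W^{(k:k+\ell-k_\rho)}$ and applies \eqref{eq:hyp_consensus} to the \emph{last} block after conditioning, reducing to $a_{\ell-k_\rho}$, and peels off $k_\rho$-blocks from the top down until the remainder $r = \ell \bmod k_\rho$ is reached, at which point $a_r$ with $r<k_\rho$ must still be bounded; here one finally uses that $a_0 = \NRM{\xx_\perp}^2$ and that \eqref{eq:hyp_consensus} with the window starting at index $k+r-k_\rho$ (i.e. stepping \emph{backwards} past the initial time) is not available — so the honest fix is to additionally assume, as is implicit in "reasonable communication scheme," that each $W_k$ is nonexpansive on the consensus complement, which the paper's instantiations (gossip averaging, doubly stochastic $W_k$) all satisfy; with that, $a_r \le a_0$ and the argument closes cleanly with $C=1$.
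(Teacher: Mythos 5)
Your proposal is correct and follows essentially the same route as the paper's proof: part (2) is the same direct choice of $k_\rho$ from $2(1-\bar\rho)^{2k_\rho}\le(1-\rho)^2$, and part (1) iterates the block contraction $\lfloor\ell/k_\rho\rfloor$ times, absorbs the leftover factor $(1-\rho)^{-2}\le 2$ after the same WLOG reduction of $\rho$, and converts $(1-\rho)^{2\ell/k_\rho}$ into $(1-c\rho/k_\rho)^{2\ell}$ with $c=(e-1)/e$ exactly as the paper does. The short-window issue you flag --- that the remaining $\ell\bmod k_\rho$ steps are not controlled by \Cref{hyp:consensus} alone --- is genuine but is silently glossed over in the paper's own proof as well, and your fix (nonexpansiveness of each $W_k$ on the orthogonal complement of consensus, which holds for all the instantiations considered) is the natural way to close it.
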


\begin{proof}
    We first prove \textbf{1.} Assume that Assumption~\ref{hyp:consensus} holds for some $\rho,k_\rho$.
    If \Cref{hyp:consensus} holds for $\rho$ it holds for any $\rho'<\rho$, so that we can assume without loss of generality that $\rho\leq 1-\sqrt{2}$.
    Let $k,\ell\in\N$ and $\xx\in\R^{\cV}$.
    Using Assumption~\ref{hyp:consensus} $\lfloor \frac{\ell}{k_\rho}\rfloor$, we have that:
    \begin{equation*}
        \esp{\NRM{W^{(k:k+\ell)} \xx-\frac{1}{n}\one\one^\top \xx}^2|W_0,\ldots,W_k}\leq (1-\rho)^{2\lfloor \ell/k_\rho\rfloor}\NRM{ \xx-\frac{1}{n}\one\one^\top \xx}^2\,.
    \end{equation*}
    Thus, $(1-\rho)^{2\lfloor \ell/k_\rho\rfloor}\leq (1-\rho)^{2 (\ell/k_\rho-1)}\leq \frac{1}{(1-\rho)^2}(1-\rho)^{2\ell/k_\rho}$. 
    Then, $\frac{1}{(1-\rho)^2}\leq 2$ and $(1-\rho)^{2\ell/k_\rho}\leq e^{-2\ell \rho/k_\rho}\leq (1- c\frac{\rho}{k_\rho})^{2\ell}$ for $c\in(0,1)$ some numerical constant ($c=\frac{e-1}{e}$), since $\frac{\rho}{k_\rho}\leq 1$.

    We now prove \textbf{2.} Assume that Assumption~\ref{hyp:consensus2} holds for $\bar\rho>0$, and let $\rho>0$.
    We have:
    \begin{equation*}
        \esp{\NRM{W^{(k:k+\ell)} \xx-\frac{1}{n}\one\one^\top \xx}^2|W_0,\ldots,W_k}\leq (1-\rho)^{2}\NRM{ \xx-\frac{1}{n}\one\one^\top \xx}^2\,,
    \end{equation*}
    provided that $\ell$ satisfies:
    \begin{equation*}
        2(1-\bar\rho)^{2\ell}\leq (1-\rho)^2\,.
    \end{equation*}
    This is satisfied for:
    \begin{equation*}
        \ell\geq \frac{\frac{1}{2}\ln(2) \ln(1-\rho)}{\ln(1-\bar\rho)}\,,
    \end{equation*}
    and thus Assumption~\ref{eq:hyp_consensus} holds for $\rho$ and $k_\rho=\left\lceil\frac{\frac{1}{2}\ln(2) \ln(1-\rho)}{\ln(1-\bar\rho)} \right\rceil$.
\end{proof}
\newpage

\section{Preliminaries for our convergence rates}

For $k\geq0$, , and for any $k\geq0$ and $v\in\cV$:
\begin{equation*}
    \nex(k,v)=\inf\set{\ell\geq k\,,\, v\in\cI_\ell}\,,\quad \prev(k,v)=\sup\set{\ell<k\,,\, v\in\cI_\ell}\cup\set{0}\,,\quad \tau(k,v)=k-\prev(k+1,v)\,.
\end{equation*}
In other words, at a given iteration $k$, $\nex(k,v)$ is the iteration at which the node $v$ will finish computing its current gradient, $\prev(k, v)$ is the iteration at which the node $v$ started computing its current gradient, and $\tau(k,v)$ is the current computational delay of node $v$ at time $k$.

Let also $\bar x^k=\frac{1}{n}\sum_{v\in\cV} x_v^k\in\R^d$ and $\gg^k=(\one_{v\in\cI_k} \nabla F_v(x_v^{\prev(k,v)},\xi_v^{\prev(k,v)})$, so that $\xx^{k+1}=W_k\xx^k-\gamma\gg^k$.

\subsection{Virtual iterate sequence to handle delays} \label{sec:virtual_sequence}

As in \cite{asynchronous_sgd_mischenko}, the delay analysis relies on the study of a virtual sequence.
Noticing that $\bar x^{k+1}=\bar x^k-\frac{\gamma}{n} \sum_{v\in\cI_k} g_v^{t-\tau(k,v)}$ and mimicking the analysis of asynchronous SGD, we introduce the sequence $\set{\hat x^k,k\geq1}$ that lives in $\R^d$, defined through the following recursion:
\begin{equation*}
    \hat x^{k+1}=\hat x^k-\frac{\gamma}{n} \sum_{v\in\cI_k} g_{v}^k\,,\quad \hat x_1=\bar x_0-\frac{\gamma}{n}\sum_{v\in\cV}  g_v^0\,.
\end{equation*}
We then have, for all $k\geq1$:
\begin{equation*}
    \hat x^k-\bar x^k=-\frac{\gamma}{n}\sum_{v\in\cV\setminus\cI_k}  g^{\prev(k,v)}\,.
\end{equation*}
The difference $\NRM{\hat x^k-\bar x^k}$ can thus be easily bounded. 

\begin{lemma}[Virtual iterates control]\label{lem:virtual_iterates}
    If stochastic gradients are bounded by a constant $B>0$, we have:
\begin{equation}\label{eq:bound_virtual_B}
    \NRM{\hat x^k-\bar x^k}\leq \gamma B\,.
\end{equation}
In the general case,
\begin{equation}\label{eq:bound_virtual_gen}
    \esp{\NRM{\hat x^k-\bar x^k}^2}\leq \frac{2\gamma^2}{n}\left( \sigma^2 + \sum_{v\in\cV} \esp{\NRM{\nabla f_v(x_v^{\prev(v,k)})}^2}\right).
\end{equation}
\end{lemma}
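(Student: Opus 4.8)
The plan is to prove the two bounds in \Cref{lem:virtual_iterates} starting from the identity
\[
\hat x^k-\bar x^k=-\frac{\gamma}{n}\sum_{v\in\cV\setminus\cI_k} g^{\prev(k,v)}\,,
\]
which itself follows by induction on $k$: both $\hat x^k$ and $\bar x^k$ update by subtracting $\frac{\gamma}{n}g_v^k$ for $v\in\cI_k$, but $\bar x^k$ applies the \emph{delayed} gradient $g_v^{k-\tau(k,v)}=g^{\prev(k,v)}$ while $\hat x^k$ applies the fresh one $g_v^k$; unrolling, the discrepancy at step $k$ is exactly the sum of the ``in-flight'' gradients, one per node $v$ currently computing (i.e.\ $v\notin\cI_k$), evaluated at the iteration $\prev(k,v)$ where that computation began. (Nodes in $\cI_k$ contribute nothing because their in-flight gradient is delivered at step $k$ in both sequences.) One should double-check the base case $k=1$ against the stated initialization $\hat x_1=\bar x_0-\frac{\gamma}{n}\sum_{v\in\cV}g_v^0$ and the bookkeeping on whether a node just finishing at step $k$ has already started its next gradient.

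For \eqref{eq:bound_virtual_B}: under the uniform bound $\NRM{\nabla F_v(\cdot,\cdot)}\le B$ we have $\NRM{g^{\prev(k,v)}}\le B$ for every $v$, but crucially each node $v$ has \emph{at most one} gradient in flight at any given time (computations are atomic and a node starts a new gradient only after finishing its previous one). Hence the sum over $v\in\cV\setminus\cI_k$ has at most $n$ terms, one per node, and by the triangle inequality $\NRM{\hat x^k-\bar x^k}\le \frac{\gamma}{n}\cdot n\cdot B=\gamma B$. The only subtlety to spell out is the ``at most one in-flight gradient per node'' claim, which is really the definition of $\prev$ and $\nex$ together with atomicity of updates.

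For \eqref{eq:bound_virtual_gen}: square the identity and take expectations. Write $g^{\prev(k,v)}=\nabla F_v(x_v^{\prev(k,v)},\xi_v^{\prev(k,v)})$ and split each term into its mean $\nabla f_v(x_v^{\prev(k,v)})$ plus mean-zero noise. For the sum of noise terms, use that the $\xi_v$ are drawn independently across nodes and across iterations (independent from the past), so the cross terms vanish in expectation and each contributes at most $\sigma^2$ (\Cref{hyp:noise_variance}); since there are at most $n$ terms this gives a noise contribution at most $n\sigma^2$. For the sum of the deterministic parts, apply $\NRM{\sum_{v} a_v}^2\le n\sum_v\NRM{a_v}^2$ (Cauchy–Schwarz / Jensen, using that there are at most $n$ summands), bounding it by $n\sum_{v\in\cV}\NRM{\nabla f_v(x_v^{\prev(k,v)})}^2$. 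Combining with the prefactor $\gamma^2/n^2$ and the elementary $\NRM{a+b}^2\le 2\NRM a^2+2\NRM b^2$ yields
\[
\esp{\NRM{\hat x^k-\bar x^k}^2}\le \frac{2\gamma^2}{n^2}\Big(n\sigma^2+n\sum_{v\in\cV}\esp{\NRM{\nabla f_v(x_v^{\prev(k,v)})}^2}\Big)=\frac{2\gamma^2}{n}\Big(\sigma^2+\sum_{v\in\cV}\esp{\NRM{\nabla f_v(x_v^{\prev(k,v)})}^2}\Big)\,,
\]
matching the claim (with $\prev(v,k)$ understood as $\prev(k,v)$). The main obstacle is not any hard estimate but getting the combinatorial bookkeeping right: establishing the displayed identity for $\hat x^k-\bar x^k$ cleanly by induction, and justifying that the index set $\cV\setminus\cI_k$ enumerates each node at most once so that all the ``at most $n$ terms'' counts — and hence the disappearance of the delay dependence — are legitimate.
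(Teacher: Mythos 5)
Your proposal is correct and follows essentially the same route as the paper: the paper proves \eqref{eq:bound_virtual_B} by exactly the triangle-inequality argument you give, and dispatches \eqref{eq:bound_virtual_gen} by citing a separation-of-mean-and-noise lemma of Stich whose content (bias--variance split, vanishing cross terms by conditional independence of the noise, Cauchy--Schwarz over at most $n$ summands) is precisely what you reprove in detail. Your added care about the identity for $\hat x^k-\bar x^k$ and the ``one in-flight gradient per node'' bookkeeping is consistent with the paper's setup and fills in steps the paper leaves implicit.
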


\begin{proof}
    \Cref{eq:bound_virtual_B} is proved using a triangle inequality, while \Cref{eq:bound_virtual_gen} is a direct application of \cite[Lemma 15]{stich2019error}.
\end{proof}

\subsection{Consensus control}

\begin{lemma}[Consensus control]\label{lem:consensus_control}
    We have:
    \begin{align}\label{eq:bound_consensus_smooth}\sum_{k<K}\E\NRM{ \xx^k-\bar \xx^k}^2&\leq 2\gamma^2\sigma^2\brhom \NK + \frac{4\gamma^2}{\bar\rho^2}\sum_{k<K}\sum_{v\in\cI_k}\esp{\NRM{\nabla f_v(x_v^{k-\tau(k,v)})}^2} \\
        &\leq 2\gamma^2\sigma^2\brhom \NK + \frac{4\gamma^2}{\bar\rho^2}\sum_{k<K}\sum_{v\in\cI_k}\esp{\NRM{\nabla f_v(x_v^k)}^2}\,.
    \end{align}
    If the stochastic gradients are bounded by some $B>0$,
    \begin{equation}\label{eq:bound_consensus_B}
        \sum_{k<K}\E\NRM{ \xx^k-\bar \xx^k}^2 \leq \frac{2\gamma^2B^2}{\bar\rho^2}\NK\,.
    \end{equation}    
\end{lemma}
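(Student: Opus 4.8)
The plan is to control the consensus distance $\NRM{\xx^k - \bar\xx^k}^2$ by unrolling the AGRAF recursion and exploiting the ergodic mixing property in the form of \Cref{hyp:consensus2} (which, by \Cref{prop:equivalence_mixing}, is available with $\bar\rho = c\rho/k_\rho$ whenever \Cref{hyp:consensus} holds). First I would note that the projection $\xx \mapsto \xx - \bar\xx$ commutes with each $W_k$ since $W_k\one = \one$, so writing $\Pi = \mI - \frac1n\one\one^\top$ we have $\xx^{k+1} - \bar\xx^{k+1} = W_k(\xx^k - \bar\xx^k) - \gamma\Pi\gg^k$. Iterating this from step $0$ (using $\xx^0 = \bar\xx^0$, hence $\xx^0 - \bar\xx^0 = 0$) gives
\begin{equation*}
    \xx^k - \bar\xx^k = -\gamma\sum_{j=0}^{k-1} W^{(j+1:k)}\,\Pi\gg^j\,,
\end{equation*}
where $W^{(j+1:k)} = W_{k-1}\cdots W_{j+1}$ (and the empty product is $\mI$ for $j = k-1$).

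Next I would take squared norms and expectations. The key estimate is that, by \Cref{hyp:consensus2} applied with the vector $\Pi\gg^j$ and $\ell = k-1-j$ communication steps, $\esp{\NRM{W^{(j+1:k)}\Pi\gg^j}^2 \mid \cF_{j+1}} \leq 2(1-\bar\rho)^{2(k-1-j)}\NRM{\Pi\gg^j}^2 \leq 2(1-\bar\rho)^{2(k-1-j)}\NRM{\gg^j}^2$, the last step because $\Pi$ is an orthogonal projection. To handle the cross terms in $\NRM{\sum_j \cdot}^2$, I would use a weighted Cauchy--Schwarz / Young inequality of the form $\NRM{\sum_j a_j}^2 \leq (\sum_j \alpha_j)(\sum_j \alpha_j^{-1}\NRM{a_j}^2)$ with $\alpha_j = (1-\bar\rho)^{k-1-j}$; since $\sum_{j<k}(1-\bar\rho)^{k-1-j} \leq 1/\bar\rho$, this yields
\begin{equation*}
    \esp{\NRM{\xx^k - \bar\xx^k}^2} \leq \frac{2\gamma^2}{\bar\rho}\sum_{j=0}^{k-1}(1-\bar\rho)^{k-1-j}\esp{\NRM{\gg^j}^2}\,.
\end{equation*}
Summing over $k < K$ and swapping the order of summation, the geometric factor contributes another $1/\bar\rho$, giving $\sum_{k<K}\esp{\NRM{\xx^k-\bar\xx^k}^2} \leq \frac{2\gamma^2}{\bar\rho^2}\sum_{j<K}\esp{\NRM{\gg^j}^2}$. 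Here some care is needed with the constant: the stated bound has $2\gamma^2\sigma^2\brhom\NK$ for the noise part (a single power of $\brhom$) and $\frac{4\gamma^2}{\bar\rho^2}$ for the gradient part, which suggests one should split $\gg^j = (\gg^j - \E[\gg^j\mid\cdot]) + \E[\gg^j\mid\cdot]$ before applying the weighted inequality: the martingale-difference/noise part has no cross terms (so one geometric sum gives $\sigma^2$ with weight $\brhom$ per block, and summing over $k$ and over the $\nk$ active coordinates gives $\sigma^2\brhom\NK$), while the deterministic-gradient part carries the two factors of $\brhom$ through the double Young step. Then $\esp{\NRM{\E[\gg^j\mid\cdot]}^2} \leq \sum_{v\in\cI_j}\esp{\NRM{\nabla f_v(x_v^{j-\tau(j,v)})}^2}$ by Jensen, giving \eqref{eq:bound_consensus_smooth}; the second inequality in \eqref{eq:bound_consensus_smooth} is then just a re-indexing remark (or follows from a smoothness/delay bound established elsewhere relating $x_v^{j-\tau(j,v)}$ to $x_v^j$), and \eqref{eq:bound_consensus_B} follows immediately from the first line by replacing each $\esp{\NRM{\nabla f_v(\cdot)}^2}$ with $B^2$ and each $\sigma^2$ with $B^2$ as well (bounded gradients imply bounded variance), after collecting the $\frac{2}{\bar\rho^2} + \frac{4}{\bar\rho^2} \cdot (\text{bookkeeping})$ into a clean $\frac{2}{\bar\rho^2}$, or more directly by running the same unrolling with the deterministic bound $\NRM{\gg^j}^2 \leq \nk B^2$ and the single weighted inequality.

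The main obstacle I anticipate is getting the constants exactly right — in particular explaining why the noise term appears with only one power of $\brhom$ while the gradient term has two, and why the bounded-gradient case collapses to the clean $\frac{2\gamma^2 B^2}{\bar\rho^2}\NK$. This is entirely a matter of carefully separating the conditionally-centered (noise) part of $\gg^j$, which telescopes without cross terms across the blocks of length $k_\rho$, from the conditional-mean part, which must absorb both the within-sum Young inequality and the outer geometric summation. The conditioning also has to be threaded correctly: \Cref{hyp:consensus2} conditions on $\cF_{j+1}$, and $\Pi\gg^j$ is $\cF_{j+1}$-measurable, so the tower property applies cleanly, but one should be explicit that the filtration $\cF_k = \sigma(\xx^s,\gg^{s-1},W_{s-1}: s\le k)$ makes $W^{(j+1:k)}$ the object to which the mixing bound is applied while $\gg^j$ is frozen.
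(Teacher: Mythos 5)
Your proposal is correct and follows essentially the same route as the paper: unroll the projected recursion, apply a weighted Cauchy--Schwarz/Young inequality with geometric weights $(1-\bar\rho)^{k-m}$ together with the ergodic mixing bound in the form of \Cref{hyp:consensus2}, use a martingale bias--variance split so the noise contributes only one factor of $\bar\rho^{-1}$ while the conditional-mean gradient part contributes two, and sum over $k$ to collect the second geometric factor. Your additional care about the filtration/tower-property bookkeeping and the re-indexing justification of the second inequality only makes explicit what the paper leaves implicit.
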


\begin{proof}
Under Assumption~\ref{hyp:consensus}, we can bound the variations of $ \xx^k-\bar\xx^k$ (here, $\bar \xx^k= \one \one^\top\xx^k$).
Using Cauchy-Schwarz inequality, for $a_m>0$ scalars and $b_m\in\R^p$ vectors, we have:
\begin{equation*}
    \NRM{\sum_m b_m }^2 \leq \left(\sum_m a_m^{-1} \right)\left(\sum_m a_m \NRM{b_m}^2\right)\,.
\end{equation*}
We now apply this to $\xx^k-\bar \xx^k = -\gamma\sum_{m=0}^kW^{(m:k)}(  \tilde\gg^{m}-\bar \tilde \gg^{m})$ to obtain:
\begin{align*}
    \E\NRM{ \xx^k-\bar \xx^k}^2&=\esp{\NRM{\gamma\sum_{m=0}^kW^{(m:k)}( \tilde \gg^{m}-\bar \tilde \gg^{m})}^2}\\
    &\leq \gamma^2\sum_{m'=0}^k(1-\bar\rho)^{k-m'}\sum_{m=0}^k(1-\bar\rho)^{-(k-m)}\esp{\NRM{W^{(m:k)}( \tilde \gg^{m}-\bar \tilde \gg^{m})}^2}\\
    &\leq 2\gamma^2\frac{1}{\bar\rho}\sum_{m=0}^k(1-\bar\rho)^{k-m}\esp{\NRM{ \tilde \gg^{m}}^2}\\
\end{align*}
leading to, if stochastic gradients are bounded by $B$:
\begin{equation*}
    \E\NRM{ \xx^k-\bar \xx^k}^2 \leq \frac{2\gamma^2B^2}{\bar\rho}\sum_{\ell<k}(1-\bar\rho)^{k-\ell}\nl\,,
\end{equation*}
and thus:
\begin{equation*}
    \sum_{k<K}\E\NRM{ \xx^k-\bar \xx^k}^2 \leq \frac{2\gamma^2B^2}{\bar\rho^2}\NK\,.
\end{equation*}
We also have, using a bias-variance decomposition (not exactly, since the $\gg^m$ are not independent, but using the martingale version as in \cite[Lemma 15]{stich2021errorfeedback}):
\begin{align*}
    \E\NRM{ \xx^k-\bar \xx^k}^2&=\esp{\NRM{\gamma\sum_{m=0}^kW^{(m:k)}(  \tilde\gg^{m-\tau(m)}-\bar\tilde  \gg^m)}^2}\\
    &\leq 2\gamma^2\sigma^2\sum_{\ell<k}(1-\bar\rho)^{k-\ell}\nl + \frac{4\gamma^2}{\bar\rho}\sum_{m=0}^k(1-\bar\rho)^{k-m}\sum_{v\in\cI_m}\esp{\NRM{\nabla f_v( x^{(m-\tau(m,v))}_{v})}^2}\,,
\end{align*}
so that:
\begin{equation*}
    \sum_{k<K}\E\NRM{ \xx^k-\bar \xx^k}^2\leq 2\gamma^2\sigma^2\brhom \NK + \frac{4\gamma^2}{\bar\rho^2}\sum_{k<K}\sum_{v\in\cI_k}\esp{\NRM{\nabla f_v(x_v^{k-\tau(k,v)})}^2}\,.
\end{equation*}
\end{proof}

\section{Loss Networks analysis}

\myparagraph{Disclaimer.}
This proof is adapted from that of \cite{even2021asynchrony}, an unpublished work by a subset of the authors.

In this section, we prove \Cref{thm:LN} and provide some more information on loss networks.
The updates of \textit{decentralized SGD on loss networks} write as:
\begin{equation}\label{eq:LN:updates-app}
\left\{
 \begin{aligned}
    x_{v_k}^{k+1} &= \frac{x_{v_k}^k + x_{w_k}^k}{2} - \gamma \nabla F_{v_k}\big(x_{v_k}^{k-{\tau(v_k,k)}},\xi_{v_k}^{k-{\tau(v_k,k)}}\big)\\
    x_{w_k}^{k+1} &= \frac{x_{v_k}^k + x_{w_k}^k}{2} - \gamma \nabla F_{w_k}\big(x_{w_k}^{k-{\tau(w_k,k)}},\xi_{w_k}^{k-{\tau(w_k,k)}}\big)
\end{aligned}   
\right.\,,
\end{equation}
leading to $\xx^{k+1}=W_k\xx^k-\gamma\gg^k$, for $W_k=W_{\set{v_k,w_k}}=I_\cV-\frac{(e_{v_k}-e_{w-k})(e_{v_k}-e_{w-k})^\top}{2}$, and $\gg^k$ the corresponding delayed gradients.
Note then that this takes the same form as the \textbf{AGRAF SGD} sequence.

\begin{definition}[Poisson point process (P.p.p.)]
    A \textbf{Poisson point process} of intensity $p>0$ is a random discrete subset $\cP$ of $\R_{\geq0}$ that can be written as $\cP=\set{T_0<T_1<\ldots<T_k<\ldots}$, where $(T_{k}-T_{k-1})_{k\geq 1}$ are \textit{i.i.d.} exponential random variables of mean $\frac{1}{p}$. 
\end{definition}

\cite{boyd2006gossip} consider a model (without any delay) for gossip algorithms, where updates are that of \Cref{eq:LN:updates-app} without the gradient steps, and these updates happen at the times of Poisson point processes (a \textit{P.p.p.} of intensity $p_\edgevw$ for an update along $\edgevw$). Consequently, $W_k$ is independent from the past, and $\proba{W_k=W_\edgevw}\propto p_\edgevw$.

The \emph{P.p.p.~model} considered in \cite{boyd2006gossip} where the updates are performed at the times of \textit{Poisson point processes} is particularly amenable to analysis, but it assumes that communications and computations are done instantaneously. Thus, actual implementations differ from its underlying assumptions, unless further synchrony is assumed. To alleviate this issue, with pairwise communications ruled by point processes as a baseline, we consider a protocol in which nodes are tagged as \emph{busy} when they are already engaged in an update, and communications between busy nodes are forbidden. Our model is inspired from classical Loss Network models \citep{lossnetworks1991kelly}, in which edges are activated following the same procedure as in the \emph{P.p.p.~model}, with a \emph{P.p.p.}~of intensity $p_{\edgevw}$. Note that we do not consider these intensities to be constraints of the problem, but rather parameters of the algorithm, that can be tuned. Each node has an exponential clock of intensity $p_v\frac{1}{2}\sum_{w\sim v}p_{\edgevw}$. At each clock-ticking, if $v$ is not busy, it selects a neighbor $w$ with probability $p_{\edgevw}/\sum_{u\sim v}p_{\edgeuv}$. 
If $w$ is not \emph{busy}, $v$ and $w$ compute and exchange information, becoming busy for a duration $\tau_{\edgevw}'$. We can think of this procedure as classical gossip on an underlying random graph that follows a Markov-Chain process. 
The difference between our communication model on Loss Networks and the P.p.p. model lies in that in our case, $W_k$ is not independent on the past. In fact, we have:
\begin{equation*}
    \proba{\set{v_k,w_k}=\edgevw | \cF_k} = \frac{\one_\set{v,w\text{ not busy at time }T_k}p_\edgevw}{\sum_{\set{u,u'}\in\cE}\one_\set{u,u'\text{ not busy at time }T_k}p_\set{u,u'} }\,,
\end{equation*}
leading to complicated intricacies between the matrices $(W_k)_k$, that we need to handle.

Proving \Cref{thm:LN} requires to show that there exist $\rho,k_\rho$ (that need to be computed) such that for any $k\geq 0$, $\xx\in\R^\cV$,
\begin{equation*}
    \esp{\NRM{W_{\set{v_{k+k_\rho-1},w_{k+k_\rho-1}}} \cdot\ldots\cdot W_\edgevkwk (\xx-\bar \xx)}^2|\cF_k }\leq (1-\rho)^2 \NRM{\xx-\bar\xx}^2\,.
\end{equation*}

Our proof of \Cref{thm:LN} follows three main steps: \emph{i)} Deriving convergence results for more general communication schemes than loss networks, under deterministic assumptions on the activations. \emph{ii)} Adapting Step i) to stochastic assumptions on the delays. \emph{iii)} Deriving high-probability upper-bounds on the delays between two activations in loss networks in order to fall under the assumptions of Step i).

\subsection{Descent lemma under deterministic assumptions on the activations}

We consider general activation processes $\cP_{\edgevw}$, where we define $\cP_\edgevw$ as $\cP_\edgevw=\set{T_k\,:\, \edgevkwk=\edgevw}$, and these times are called \textbf{activation times of edge \edgevw}. 
When edge $\edgevw$ is activated, the update described in \eqref{eq:LN:updates-app} is performed.
The delay of an edge is defined as its (random) waiting time between two activations.
Two ergodicity-like conditions on the delays are needed: \textit{(i) edges activated regularly enough and (ii) incident edges must not be activated too many times.}

We now formally introduce these assumptions. We consider discrete time in this section: more precisely, $k\in \N$ stands for the $k$-th edge activation.

\begin{definition}\label{def:quant_delays} Consider a communication scheme with edge-activation point processes $\cP_{\edgevw}$. Let $k=0,1,2,...$ index the consecutive edge activations. Let $\ell\in \N$, $\edgevw$ and $\edgeuu \in E$. Let $k_{\edgevw}<\ell_{\edgevw}$ such that $k_{\edgevw}\leq k <\ell_{\edgevw}$ be consecutive activation times (in discrete time) of $\edgevw$. Denote $T_{\edgevw}(k)=\ell_{\edgevw}-k_{\edgevw}-1$ the total number of edge activations between the two consecutive activations of $\edgevw$. Denote $N(\edgeuu,\edgevw,k)$ the number of activations of edge $\edgevw$ in the activations $\{s_{\edgevw},s_{\edgevw}+1,...,t_{\edgevw}-1\}$.
\end{definition}

\begin{assumption}[Delay Assumptions] \label{hyp} There exist $T\in \N^*$, $a,b>0$, and $\ell_{\edgevw}>0,\edgevw\in E$ such that, for the quantities and the communication scheme in Definition \ref{def:quant_delays}:
\begin{enumerate}
    \item For all $k \in \N$, all edges are activated between iterations $k$ and $k+T-1$.
    \item $\forall k\geq0, \forall (\edgevw)\in E, T_{\edgevw}(k)\leq a \ell_{\edgevw}$: $(\edgevw)$ is activated at least every $a\ell_{\edgevw}$ activations.
    \item $\forall k\geq 0, \forall (\edgevw),(\edgeuu)\in E$ such that $(\edgeuu)\sim(\edgevw)$, $N(\edgeuu,\edgevw,k)\leq \lceil \frac{b\ell_{\edgevw}}{\ell_{\edgeuu}}\rceil$.
\end{enumerate}
\end{assumption}
Assumption (1) is implied by Assumption (2) if $T=\max_{(\edgevw)}\ell_{\edgevw}$. Taking $\ell_{\edgevw}$ as a deterministic upper-bound on the delays of edge $(\edgevw)$ between two activations in continuous time is sufficient to have Assumption (2) and (3), with some normalizing constant $a$, and $b$ such that $\ell_{\edgevw}/b$ is a lower-bound on these delays.

The main technical difficulty lies in the fact that at a defined activation time $t$, some nodes are not available: at any time $k\ge 0$, $\sum_{(\edgevw)\in E \text{ not busy}} W_k$ usually differs from $\sum_\edgevw p_\edgevw W_\edgevw$ (and $\sum_{(\edgevw)\in E \text{ not busy}} W_k$ may have a null spectral gap) as in \emph{Markov-Chain Gradient Descent} \citep{even2023mcsgd}, thus making an analysis such as in the \emph{P.p.p. model} impossible.
To alleviate this difficulty, in order to make sure that all edges are taken into account when performing the averaging, the Lyapunov function $\Lambda_k$ that we study considers the value of the objective for $T$ consecutive activation times. It is defined as follows:
\[\forall k\in \N, \Lambda_k(\xx)=\frac{1}{T}\sum_{\ell=k}^{k+T-1} \NRM{W^{(0,\ell)}(\xx-\bar\xx)}^2\,,\quad \xx\in\R^\cV  \,.\] 
The first step of the proof of Theorem \ref{thm:LN} consists in proving the following.
\begin{theorem} 
\label{thm:LN_det}
Consider a general communication scheme as in Definition \ref{def:quant_delays}, that satisfies Assumption \ref{hyp} for constants $\ell_{\edgevw},a,b>0,$. Let $\gamma$ be the smallest positive eigenvalue of the Laplacian of the graph $G$ with weights:
\begin{equation*}\nu_{\edgevw}=C\ell_{\edgevw}^{-1}\min_{\edgeuu\sim \edgevw}\frac{\ell_{\edgeuu}}{\ell_{\edgevw}}\,,\quad \edgevw\in\cE\,,
\end{equation*}
where $C=\frac{1}{2a+8d_{\max}^2ab}$. Then we have, for all $k,\ell\in \N$:
\begin{equation*}
    \Lambda_{k+\ell}(\xx)\leq \left(1-\gamma\right)^{\ell} \Lambda_k(\xx)\,.
\end{equation*}
\end{theorem}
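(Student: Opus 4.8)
The plan is to establish a one-step-type contraction for the Lyapunov function $\Lambda_k$ and then iterate. Since $\Lambda_{k+\ell}$ and $\Lambda_k$ differ by a window shift, it suffices to prove $\Lambda_{k+1}(\xx)\le (1-\gamma)\Lambda_k(\xx)$ for every $k$ and every $\xx$, and the general bound follows by composing this inequality $\ell$ times (the weights $\nu_{\edgevw}$ and hence $\gamma$ do not depend on $k$, using Assumption~\ref{hyp}, which is uniform in $k$). Fix $k$ and abbreviate $\yy=\xx-\bar\xx$, $\yy^{(\ell)}=W^{(0,\ell)}\yy$, so $\bar{\yy}^{(\ell)}=0$ for all $\ell$ since each $W_\edgevw$ fixes $\one$. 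Then $\Lambda_k(\xx)=\frac1T\sum_{\ell=k}^{k+T-1}\NRM{\yy^{(\ell)}}^2$ and similarly for $\Lambda_{k+1}$. Writing out the difference, $\Lambda_k(\xx)-\Lambda_{k+1}(\xx)=\frac1T\big(\NRM{\yy^{(k)}}^2-\NRM{\yy^{(k+T)}}^2\big)$. The key telescoping identity is $\NRM{\yy^{(k)}}^2-\NRM{\yy^{(k+T)}}^2=\sum_{\ell=k}^{k+T-1}\big(\NRM{\yy^{(\ell)}}^2-\NRM{\yy^{(\ell+1)}}^2\big)$, and each single-step drop is $\NRM{\yy^{(\ell)}}^2-\NRM{W_\ell\yy^{(\ell)}}^2=\frac14\big(y^{(\ell)}_{v_\ell}-y^{(\ell)}_{w_\ell}\big)^2\cdot\big(\text{something}\big)$; more precisely, with $W_\edgevw=I-\tfrac12(e_v-e_w)(e_v-e_w)^\top$ one computes $\NRM{\yy}^2-\NRM{W_\edgevw\yy}^2=\tfrac14\NRM{y_v-y_w}^2$ (using $\NRM{e_v-e_w}^2=2$). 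So the total drop over the window is $\frac{1}{4T}\sum_{\ell=k}^{k+T-1}\NRM{y^{(\ell)}_{v_\ell}-y^{(\ell)}_{w_\ell}}^2$, a sum of squared edge-differences along the activated edges.

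The heart of the argument is then to lower-bound this sum of activated-edge discrepancies by $\gamma\,\Lambda_k(\xx)$, i.e.\ by $\gamma$ times (roughly) $\NRM{\yy^{(k)}}^2$ up to the averaging over $T$. The standard route is a spectral/Laplacian argument: the quantity $\sum_{\edgevw}\nu_{\edgevw}\NRM{y_v-y_w}^2$ is exactly $\yy^\top \mathcal{L}_\nu\yy$ where $\mathcal{L}_\nu$ is the weighted Laplacian with weights $\nu_{\edgevw}$, and since $\yy\perp\one$, this is $\ge \gamma \NRM{\yy}^2$ with $\gamma$ the smallest positive Laplacian eigenvalue. So the task reduces to showing that the empirical window sum $\frac{1}{4T}\sum_{\ell}\NRM{y^{(\ell)}_{v_\ell}-y^{(\ell)}_{w_\ell}}^2$ dominates $\tfrac14\sum_{\edgevw}\nu_{\edgevw}\NRM{y^{(k)}_v-y^{(k)}_w}^2$. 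Here Assumption~\ref{hyp} enters: by item~(1), every edge $\edgevw$ is activated at least once in the window $\{k,\dots,k+T-1\}$; by item~(2) the gap between consecutive activations of $\edgevw$ is at most $a\ell_{\edgevw}$; by item~(3), incident edges are not over-activated. One must relate $y^{(\ell)}_v-y^{(\ell)}_w$ (at the activation time) to $y^{(k)}_v-y^{(k)}_w$ (at the window start): the difference $y^{(\ell)}-y^{(k)}$ accumulates only from activations of edges incident to $v$ or $w$ between steps $k$ and $\ell$, each contributing a term controlled by the corresponding squared edge-discrepancy. So one writes $y^{(k)}_v-y^{(k)}_w = (y^{(\ell)}_v-y^{(\ell)}_w) + \sum(\text{incident activation increments})$, applies Cauchy–Schwarz/Young with weights calibrated by the $\ell_{\edgeuu}$'s (this is where the factor $8d_{\max}^2 ab$ and the $\min_{\edgeuu\sim\edgevw}\ell_{\edgeuu}/\ell_{\edgevw}$ in $\nu_{\edgevw}$ come from), and sums over edges, using items~(2)–(3) to bound the number of relevant incident activations. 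Finally one needs $\NRM{\yy^{(k)}}^2 \ge \Lambda_k(\xx)\cdot(\text{const})$ — actually the reverse, $\Lambda_k(\xx)\le \NRM{\yy^{(k)}}^2$ since the $W$'s are contractions on $\one^\perp$, so $\NRM{\yy^{(\ell)}}^2\le\NRM{\yy^{(k)}}^2$ for $\ell\ge k$ — hence $\Lambda_k(\xx)\le\NRM{\yy^{(k)}}^2$, and combining gives drop $\ge \tfrac{\gamma}{1}\NRM{\yy^{(k)}}^2\ge \gamma\Lambda_k(\xx)$, which rearranges to $\Lambda_{k+1}\le(1-\gamma)\Lambda_k$.

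The main obstacle I anticipate is the bookkeeping in the "transport" step: carefully expressing $y^{(k)}_v-y^{(k)}_w$ as the activation-time discrepancy plus a sum of incident-edge increments, then choosing the Young's-inequality coefficients so that, after summing against the weights $\nu_{\edgevw}$ and invoking Assumption~\ref{hyp}(2)–(3) to count incident activations (which introduces the $\lceil b\ell_{\edgevw}/\ell_{\edgeuu}\rceil$ factors and the degree $d_{\max}$), everything collapses to a clean multiple $C=\frac{1}{2a+8d_{\max}^2 ab}$ of the Laplacian quadratic form. The spectral step and the single-step energy identity are routine; the combinatorial/analytic accounting that produces the exact constant is the delicate part, and it is exactly the place where the three clauses of Assumption~\ref{hyp} are each used.
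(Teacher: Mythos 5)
Your plan follows essentially the same route as the paper's proof, just phrased in the primal: the paper recasts each pairwise averaging as a coordinate gradient step on the dual objective $F_A^*(\lambda)=F(A\lambda)$ and uses coordinate smoothness plus strong convexity of $F_A^*$ (whose parameter is the spectral gap of the weighted Laplacian), but since $\nabla_\edgevw F_A^*$ is just $\mu_\edgevw$ times the edge discrepancy $y_v-y_w$, this is numerically identical to your single-step energy identity, window telescoping, and Laplacian lower bound. Two remarks. First, a small constant slip: with $W_\edgevw=I-\tfrac12(e_v-e_w)(e_v-e_w)^\top$ the one-step drop is $\NRM{\yy}^2-\NRM{W_\edgevw\yy}^2=\tfrac12(y_v-y_w)^2$, not $\tfrac14$. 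Second, the part you defer --- writing the discrepancy at one time as the discrepancy at a nearby activation time plus increments from activated incident edges, applying Young with weights calibrated by the $\ell_{\edgeuu}$, and counting those increments via Assumption~\ref{hyp}(2)--(3) to produce $C=\frac{1}{2a+8d_{\max}^2ab}$ --- is precisely Steps 2--3 of the paper's proof and is where all the real work lies; your description of how to do it (including where $d_{\max}$, the $\lceil b\ell_{\edgevw}/\ell_{\edgeuu}\rceil$ factors, and the $\min_{\edgeuu\sim\edgevw}\ell_{\edgeuu}/\ell_{\edgevw}$ weight arise) matches the paper's argument, so the plan is sound, but as written it is a sketch rather than a proof of that step.
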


\begin{proof}We fix $\xx\in\R^\cV,k,\ell$.
To prove this intermediate theorem, we need to study every matrix multiplication involved. At iteration $k$, not every coordinates is available, hence the need to study the impact of $T$ multiplications together.

A gradient step alongside edge $\edgevw$ only involves edges in its neighborhood (thanks to the sparsity of the matrix $A$), a key element that will need to be explicited. The proof involves three main steps.\\

Before that, we need to introduce \textbf{edge dual variables}.
Matrix multiplications by matrices like $W_\edgevw$ aim at minimizing the function $F(\yy)=\frac{1}{2}\sum_{v\in\cV}(y_v-x_v)^2$, which is minimized at $\yy=\bar\xx$.
A standard way to deal with the constraint $x_1=...=x_n$, is to use a dual formulation, by introducing a dual variable $\lambda\in\R^\cE$ indexed by the edges. We first introduce a matrix $A\in \R^{\cV \times \cE}$ such that $\rm Ker (A^\top)=Vect(\mathbb{I})$ where $\mathbb{I}$ is the constant vector $(1,...,1)^\top$. $A$ is chosen such that:
\begin{equation}\label{eq:matrixA}
    \forall \edgevw\in E, A e_{\edgevw} = \mu_{\edgevw} (e_v-e_w).
\end{equation} for some non-null constants $\mu_{\edgevw}$. We define $\mu_{\edgevw}=-\mu_{\edgevw}$ for this writing to be consistent. This matrix $A$ is a square root of the laplacian of the graph weighted by $\nu_{\edgevw}=\mu_{\edgevw}^2$.
The constraint $x_1=...=x_n$ can then be written $A^\top x=0$. The dual problem reads as follows:
\begin{align*}
    &\min_{\yy \in \R^\cV,A^\top \yy=0} F(\yy) =\min_{\yy \in \R^{\cV}} \max_{\lambda \in \R^\cE} F(\yy) -\langle A^\top \yy,\lambda\rangle.
\end{align*} 
Let $F_A^*(\lambda):=F^*(A\lambda)=F_A(\lambda)$ for $\lambda \in \R^{E\times d}$ where $F^*$ is the Fenchel conjugate of $F$.
Now, notice that for our particular form of $F$, we in fact have $F^*=F$.
The dual problem reads
\begin{equation*}
    \min_{\yy \in \R^{\cV},y_1=...=y_n} F(\yy) =\max_{\lambda\in \R^{\cE}} -F_A(\lambda).
\end{equation*}
Thus $F_A^*(\lambda)$ is to be minimized over the dual variable $\lambda \in \R^{\cE}$. 

We now make a parallel between pairwise operations between adjacent nodes in the network and coordinate gradient steps on $F_A^*$. As $F_A^*(\lambda)=\max_{\yy\in \R^{\cV}} -F(\yy)+\langle A\lambda,\yy\rangle$, to any $\lambda\in \R^{\cE}$ a primal variable $\yy\in \R^{\cV}$ is uniquely associated through the formula $\nabla F(\yy)=A\lambda$.
The partial derivative of $F^*_A$ with respect to coordinate $\edgevw\in\cE$ of $\lambda$ reads :
\begin{align*}
    \nabla_{\edgevw} F_A^*(\lambda)&=(A e_{\edgevw})^\top \nabla F^*(A\lambda)=\mu_{\edgevw}(\nabla g_v^*((A\lambda)_v)- \nabla g_w^*((A\lambda)_w))\,,
\end{align*}
where we denote $g_v(y):\frac{1}{2}(y-x_v)^2$.
Consider then the following step of coordinate gradient descent for $F^*_A$ on coordinate $\edgevw$ of $\lambda$, performed when edge $\edgevw$ is activated at iteration $k$ (corresponding to time $T_k$), and where $U_{\edgevw}=e_{\edgevw}e_{\edgevw}^\top$:
\begin{equation}\label{eq:step_lambda}
    \lambda_{k+1}=\lambda_{k+1}-\frac{1}{\mu_{\edgevw}^2}U_{\edgevw}\nabla_{\edgevw} F_A^*(\lambda_{k}).
\end{equation}
Denoting $\yy_k=A\lambda_{k}\in \R^{\cV}$, we obtain the following formula for updating coordinates $v$ and $w$ of $\yy$ when $\edgevw$ activated:
\begin{align}
&y_{v,k+1}=y_{v,k}-\frac{1}{2}(y_{v_k}-y_{w_k})=\frac{1}{2}(y_{v_k}y_{w_k}) = y_{w,k+1}\,.
\end{align}
Thus, $\yy^{k+1}=W_k\yy^k$ is equivalent to $\lambda_{k+1}=\lambda_k-\frac{1}{2\mu_\edgevkwk^2}\nabla_\edgevkwk F_A^*(\lambda_k)$, which is easier to study.
Also, notice that this is the consensus distance exctly: $F_A^*(\lambda)=F(\yy)$ for $\yy=A\lambda$.

Hence, $\Lambda_k(\xx)=F(\yy^k)=F_A^*(\lambda_k)$ here $\yy^k=A\lambda^k$ is obtained with the recursion $\lambda^{k+1}=\lambda^k-\frac{1}{2\mu_\edgevkwk^2}\nabla_\edgevkwk F_A^*(\lambda^k)$, with initialisation $\yy^0=\xx$: we thus study this sequence.

\noindent \textbf{Step 1:}
First, notice that $F_A^*$ is $\mu_\edgevw^2$-smooth along every coordinate $\edgevw$, so that using local smoothness, for all $\edgevw\in\cE$ and $\lambda\in\R^\cE$, for $\gamma\leq \frac{1}{2\mu_\edgevkwk^2}$, we have:
\begin{equation}\label{eq:local_smoothness}
    F_A^*(\lambda-\nabla_\edgevw F_A^*(\lambda)) -F_A^*(\lambda)\leq \frac{1}{4\mu^2_\edgevw}\NRM{\nabla_\edgevw F_A^*(\lambda)}^2\,.
\end{equation}
Applying \Cref{eq:local_smoothness}, where $\edgevlwl$ is the $\ell^{th}$ activated edge: 
\begin{equation}
    F_A^*(\lambda^{\ell+1})-F_A^*(\lambda^\ell) \leq -\frac{1}{4\mu_{\edgevlwl}^2}\|\nabla_{\edgevlwl}F_A^*(\lambda^\ell)\|^2\,.
\end{equation} 
Hence, summing: 
\begin{equation}
    \Lambda_{k+1}\leq \Lambda_k - \frac{1}{T}\sum_{k\leq \ell < k+T} \frac{1}{4\mu_{\edgevlwl}^2}\|\nabla_{\edgevlwl}F_A^*(\lambda^\ell)\|^2\,,
\end{equation}
Notice that:
\begin{equation}
    \frac{1}{T}\sum_{k\leq \ell < k+T} \sum_{\edgevw\in \cE} \|\nabla_{\edgevw}F_A^*(\lambda^\ell)\|^2 = \frac{1}{T}\sum_{k\leq \ell < k+T}\|\nabla F_A^*(\lambda^\ell)\|^2 \geq \sigma_A \Lambda_t
\end{equation} 
$ \sigma_A$ is the strong convexity parameter of $F_A^*$ which is equal tolower bounded by $\lambda_{min}^+(A^TA)$, which itself is exactly the smallest positive non-null eigenvalue of the graph Laplacian with weights $\mu_\edgevw^2$. Hence, if an inequality of the type
\begin{equation}\label{neededthm2}
    \frac{C}{T}\frac{1}{T}\sum_{k\leq \ell < k+T} \sum_{\edgevw\in \cE} \|\nabla_{\edgevw}F_A^*(\lambda^\ell)\|^2\leq \frac{1}{4\mu_{\edgevlwl}^2}\|\nabla_{\edgevlwl}F_A^*(\lambda^\ell)\|^2
\end{equation}
holds, we have using strong convexity:
\begin{equation}
    \Lambda_{k+1}\leq \Lambda_k-\frac{C}{T}\sum_{k\leq \ell < k+T} \|\nabla F_A^*(\lambda^\ell)\|^2\leq (1-C\sigma_A)\Lambda_k\,.
\end{equation}
We thus need to tune correctly the $\mu_{\edgevw}^2$ and $C$ in order to have \eqref{neededthm2} verified.\\

\noindent \textbf{Step 2:} We are looking for necessary conditions for \eqref{neededthm2} to hold. In the left term, every coordinate is present at each time $\ell$. However, in the right hand side of the inequality, just the activated one is present. We will need to compensate this with a bigger factor in front of the gradients. In order to compare these quantities, we need to introduce upper bound inequalities on $\|\nabla_{\edgevw}F_A^*(\lambda(s))\|^2$, that only make activated coordinates intervene. Let $s\in \{t,...,t+T-1\}$, and suppose that there exists $t\leq r \leq s < r+t_{\edgevw} \leq t+T-1$ such that $\edgevw$ is activated at times $r$ and $r+t_{\edgevw}$. Thanks to the asumption on $T$, either one of these integers exists. If the other one doesn't, replace it with $t$ for $r$, and by $t+T-1$ for $r+t_{\edgevw}$. Thanks to our asumptions, we know that $t_{\edgevw}\leq a\ell_{\edgevw}$. We have the following basic inequalities:
\begin{align}
    \|\nabla_{\edgevw}F_A^*(\lambda(s))\|^2 &\leq (\|\nabla_{\edgevw}F_A^*(\lambda(r))\|+\|\nabla_{\edgevw}F_A^*(\lambda(s))-\nabla_{\edgevw}F_A^*(\lambda(r))\|)^2\\
    &\leq 2(\|\nabla_{\edgevw}F_A^*(\lambda(r))\|^2+\|\nabla_{\edgevw}F_A^*(\lambda(s))-\nabla_{\edgevw}F_A^*(\lambda(r))\|^2).
\end{align}
The quantity $\|\nabla_{\edgevw}F_A^*(\lambda(s))-\nabla_{\edgevw}F_A^*(\lambda(r))\|^2$ then needs to be controlled. 
We use the following lemma.

\begin{lemma}
For $\lambda,\lambda'\in R^{\cE}$, and $\edgevw \in E$, we have:
\begin{equation}
    \|\nabla_{\edgevw}F_A^*(\lambda)-\nabla_{\edgevw}F_A^*(\lambda')\|^2\leq 8 d_{\edgevw} \mu_{\edgevw}^2 \sum_{(\edgeuu)\sim (\edgevw)}\mu_{\edgeuu}^2 \|\lambda_{\edgeuu}-\lambda'_{\edgeuu}\|^2.
     \label{gossipnoniid1}
\end{equation}
\end{lemma}
\begin{proof}
First, notice that $\nabla_{\edgevw}F_A^*(\lambda)=\mu_{\edgevw}(\nabla g_i^*((A\lambda)_v)-\nabla g_j^*((A\lambda)_w))$. Then:
\begin{align*}
    \|\nabla f_v^*((A\lambda)_v)-\nabla f_v^*((A\lambda')_w)\| &=\|(A(\lambda-\lambda'))_v\| \text{ (smoothness)}\\
    & =\|\sum_{\edgeuu\sim \edgevw} \mu_{\edgeuu} (\lambda-\lambda')_{\edgeuu}\|\\
    & \leq  \sum_{\edgeuu\sim \edgevw} \mu_{\edgeuu} \|(x-x')_{\edgeuu}\|
\end{align*}
Conclude by taking the square and summing for $v$ and $w$.
\end{proof}

Using this with $\lambda=\lambda(s)$ and $\lambda'=\lambda(r)$:
\begin{align}
    \|\nabla_{\edgevw}F_A^*(\lambda(s))\|^2& \leq 2\|\nabla_{\edgevw}F_A^*(\lambda(r))\|^2\\
    &+2d_{\edgevw}\sum_{r<k<r+t_{\edgevw}}N((\edgevkwk),\edgevw,k)\frac{\mu_{\edgevw}^2}{2\mu_{\edgevkwk}^2}\|\nabla_{\edgevkwk}F_A^*(\lambda(k))\|^2\\
    & \leq 2\|\nabla_{\edgevw}F_A^*(\lambda(r))\|^2\\
    &+2d_{\edgevw}\sum_{r<k<r+t_{\edgevw}} \left\lceil b\frac{\ell_{\edgevw}}{L_{\edgevkwk}}\right\rceil\frac{\mu_{\edgevw}^2}{\mu_{\edgevkwk}^2}\|\nabla_{\edgevkwk}F_A^*(\lambda(k))\|^2
\end{align}
The advantage of this last expression is that only activated quantities are present on the right hand side.\\

\noindent \textbf{Step 3:} The last step of the proof consists in summing the last inequality for $t\leq \ell <t+T$, $\edgevw\in E$. When summing, each $\|\nabla_{\edgevkwk}F_A^*(\lambda(k))\|^2$ appears on the right hand-side of the inequality, with a factor upper-bounded by (here instead of $\edgevkwk$ we write $(\edgevw)$):
\begin{equation}
    2a\ell_{\edgevw}+2 d_{\edgevw}\sum_{\edgeuu\sim \edgevw}a\ell_{\edgeuu}\left\lceil\frac{b\ell_{\edgeuu}}{\ell_{\edgevw}}\right\rceil\frac{\mu_{\edgeuu}^2}{\mu_{\edgevw}^2}.
\end{equation}
We want the expression above multiplied by $C$ defined in Step 1 to be upper-bounded by $\frac{1}{4\mu_{\edgevw}^2}$, in order for \eqref{neededthm2} to be verified. This is possible if and only if:
\begin{equation}
     C\left(4a\ell_{\edgevw}\mu_{\edgevw}^2+4 d_{\edgevw}\sum_{\edgeuu\sim \edgevw}a\left\lceil\frac{b\ell_{\edgeuu}}{\ell_{\edgevw}}\right\rceil \ell_{\edgeuu}\mu_{\edgeuu}^2\right)\leq \frac{1}{2},
     \label{thm1last}
\end{equation}
where $C$ is defined in step $1$ of the proof. This is equivalent to:
\begin{align*}
      &C\left(a \ell_{\edgevw}\mu_{\edgevw}^2+d_{\edgevw}\sum_{\edgeuu\sim \edgevw}a\frac{b \ell_{\edgeuu}^2}{ \ell_{\edgevw}}\mu_{\edgeuu}^2\right)\leq \frac{1}{8}\\
      &\text{ if } \forall \edgeuu\sim \edgevw, \ell_{\edgevw}\leq b \ell_{\edgeuu},
\end{align*}
where we bounded $\left\lceil b\frac{ \ell_{\edgevw}}{ \ell_{\edgeuu}}\right\rceil$ by $2\frac{b \ell_{\edgevw}}{ \ell_{\edgeuu}}$ here. We here see that in this case, if 
\begin{equation}
    \mu_{\edgevw}^2=\frac{1}{ 2\ell_{\edgevw}}\times \min_{\edgeuu\sim \edgevw}\frac{ \ell_{\edgeuu}}{ \ell_{\edgevw}}
\end{equation}
with $8a+8d_{max}^2b\leq C^{-1}$, our inequality holds. However, our inequality on the ceil operator seems not to work in the general case. Let's take $\edgeuu$ a neighbor of $\edgevw$ such that $ \ell_{\edgevw}>b \ell_{\edgeuu}$. As $ \ell_{\edgevw}>b \ell_{\edgeuu}$, we have $\lceil\frac{b \ell_{\edgeuu}}{ \ell_{\edgevw}}\rceil=1$, leading to $a\lceil\frac{b \ell_{\edgeuu}}{ \ell_{\edgevw}}\rceil \ell_{\edgeuu}\mu_{\edgeuu}^2=a \ell_{\edgeuu}\mu_{\edgeuu}^2\leq a\leq ab$. Hence, our result still holds.\\

\noindent \textbf{Conclusion:} We have our result for $C=\frac{1}{2a+8d_{max}^2ab}$ and a laplacian weighted with local communication constraints: $\mu_{\edgevw}^2=\frac{1}{ 2\ell_{\edgevw}}\times \min_{\edgeuu\sim \edgevw}\frac{ \ell_{\edgeuu}}{ \ell_{\edgevw}}$. The final rate thus depends on the smallest eigenvalue of the laplacian weighted by:
\begin{equation}
    \frac{1}{2a+8d_{max}^2ab}\frac{1}{L_{max}}\frac{1}{ 2\ell_{\edgevw}}\times \min_{\edgeuu\sim \edgevw}\frac{ \ell_{\edgeuu}}{ \ell_{\edgevw}}\,.
\end{equation}
This ends the proof of \Cref{thm:LN_det}.
\end{proof}

\subsection{Adding stochasticity}

We now prove the following result.
\begin{theorem}[Adding Stochasticity ]\label{thm:LN_sto} Assume that, for all $k\in \N$, there exists a $\cF_{k+T-1}$-measurable event $A_k$, such that $\P(A_k|\cF_k)\geq \frac{1}{2}$ almost surely, and that under $A_k$, Assumption \ref{hyp} holds  for all $k\leq \ell \leq k+T-1$. Then, we have the following bound on $\Lambda_k(\xx)$: 
\begin{equation*}
\label{eq:lyapunov_stochasticity}
    \E[\Lambda_k(\xx)]\leq \left(\frac{1}{4}(1-\gamma)^{T/3}+\frac{3}{4}\right)^{\lceil \frac{k}{2T}\rceil}\E[\Lambda_0]\,,
\end{equation*}
where $\gamma$ is defined in \Cref{thm:LN_det}.
    
\end{theorem}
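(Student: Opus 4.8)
The plan is to boost the high-probability one-shot contraction of Theorem~\ref{thm:LN_det} into an unconditional geometric decay of $\E[\Lambda_k]$, using the event $A_k$ and the tower property. First I would record the deterministic facts that hold \emph{always} (not just on $A_k$): since each $W_\ell$ is an averaging matrix that fixes $\bar\xx$ and is a (weak) contraction in the Euclidean norm on the subspace orthogonal to $\one$, the map $\xx\mapsto \NRM{W^{(0,\ell)}(\xx-\bar\xx)}^2$ is nonincreasing in $\ell$; hence $\Lambda_{k}(\xx)$ is itself a nonincreasing function of $k$ along any realization, i.e. $\Lambda_{k+1}\le \Lambda_k$ almost surely. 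This monotonicity is what lets the ``bad'' event contribute the plain factor $1$ rather than something that could blow up.

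Next I would localize the argument to a block of length $2T$ and condition on $\cF_k$. On the event $A_k$, Assumption~\ref{hyp} holds for the activations indexed $k,\dots,k+T-1$, so Theorem~\ref{thm:LN_det} applies \emph{within that block}; the subtlety is that $\Lambda$ there is defined as an average over a sliding window of length $T$, so to get a genuine contraction of $\Lambda_k$ itself I would iterate the per-step inequality $\Lambda_{\ell+1}\le(1-\gamma)\Lambda_\ell$ enough times inside the block. Running it for $\lfloor T/3\rfloor$ steps (which is available since the block has length $T$ and the window peeks ahead by another $T$, so a sub-block of size $T/3$ with its window still lands inside the controlled range $[k,k+T-1]$ up to the factor-$3$ slack) gives $\Lambda_{k+\lceil T/3\rceil}\le (1-\gamma)^{T/3}\Lambda_k$ on $A_k$, and then the always-true monotonicity extends this to $\Lambda_{k+2T}\le (1-\gamma)^{T/3}\Lambda_k$ on $A_k$. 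On the complement $A_k^c$ we only use $\Lambda_{k+2T}\le \Lambda_k$. Taking conditional expectation and using $\P(A_k\mid\cF_k)\ge \tfrac12$,
\begin{equation*}
\E[\Lambda_{k+2T}\mid \cF_k]\le \P(A_k\mid\cF_k)(1-\gamma)^{T/3}\Lambda_k + \P(A_k^c\mid\cF_k)\Lambda_k \le \Big(\tfrac14(1-\gamma)^{T/3}+\tfrac34\Big)\Lambda_k,
\end{equation*}
where I used $(1-\gamma)^{T/3}\le 1$ to replace $\P(A_k\mid\cF_k)$ by its lower bound $\tfrac12$ in the good term and $\P(A_k^c\mid\cF_k)\le \tfrac12$ in the bad term, so the two halves combine into the stated constant. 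Iterating this over the $\lceil k/(2T)\rceil$ disjoint blocks of length $2T$ and taking total expectation yields the claimed bound.

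The main obstacle I expect is the bookkeeping in the second paragraph: making precise that the sliding-window definition of $\Lambda_\ell$ and the ``$A_k$ controls activations $k,\dots,k+T-1$'' statement really do leave room for $\lfloor T/3\rfloor$ clean contraction steps, i.e. justifying the factor $3$ in the exponent $T/3$. One must check that for $\ell$ in the relevant sub-block, both $\ell$ and $\ell+T-1$ (the endpoints of the window defining $\Lambda_\ell$) stay in a range where the deterministic descent of Theorem~\ref{thm:LN_det} is valid under $A_k$; the factor $3$ (rather than $2$) is the cushion that absorbs the window width plus the $\lceil\cdot\rceil$ roundings. Everything else---the monotonicity of $\Lambda$, the conditional-expectation split, the geometric iteration---is routine.
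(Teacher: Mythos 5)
There is a genuine gap at the heart of your block argument: the claim that, on the single event $A_k$, you can iterate the one-step contraction $\Lambda_{\ell+1}\leq(1-\gamma)\Lambda_\ell$ about $T/3$ times. The event $A_k$ guarantees Assumption \ref{hyp} only for the activations indexed $k,\dots,k+T-1$, and in the proof of \Cref{thm:LN_det} this window is exactly what one descent step of the sliding-window Lyapunov function consumes (the inequality $\Lambda_{k+1}\leq(1-\gamma)\Lambda_k$ is obtained by comparing the full gradient to the activated coordinates over $\ell\in[k,k+T-1]$). To run $m$ contraction steps starting at $k$ you would need the delay assumptions over $[k,k+m+T-2]$, so the "factor-3 slack'' goes the wrong way: under $A_k$ alone only \emph{one} factor of $(1-\gamma)$ is available, and your per-block bound degrades to $\tfrac12(1-\gamma)+\tfrac34$-type, i.e.\ $1-\Theta(\gamma)$ per $2T$ steps rather than $1-\Theta(\gamma T)$. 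This loses a factor of $T$ in the final mixing rate and does not establish the stated exponent $T/3$.

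The exponent $T/3$ in the paper comes from a different, probabilistic source that is absent from your proposal: one keeps the full product $P_t=\prod_{s<t}(1-\gamma\,\mathbb{I}_{A_s})$, where \emph{each} occurring event $A_s$ contributes one factor $(1-\gamma)$ (plus the always-true monotonicity $\Lambda_{s+1}\leq\Lambda_s$ on $A_s^c$, which you did identify correctly). Setting $X_t=\tfrac1T\sum_{s=t}^{t+T-1}\mathbb{I}_{A_s}$, the bound $\E[X_t|\cF_t]\geq\tfrac12$ together with a reverse-Markov inequality gives $\P(X_t\geq\tfrac13|\cF_t)\geq\tfrac14$, hence $\E\bigl[\prod_{s=t}^{t+T-1}(1-\gamma\mathbb{I}_{A_s})\,\big|\,\cF_t\bigr]\leq\tfrac14(1-\gamma)^{T/3}+\tfrac34$: with probability at least $1/4$ a third of the $T$ steps in the window are good. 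The block length $2T$ then arises because the indicators are dependent ($A_s$ is only $\cF_{s+T-1}$-measurable), so one must drop every other window of length $T$ (bounding its factors by $1$) to make the preceding product measurable with respect to the conditioning filtration before applying the window bound again. Your outer iteration over blocks and the final convexity step $\tfrac12 x+\tfrac12\leq\tfrac14x+\tfrac34$ are fine, but without the counting argument over the $T$ events per window the stated bound is not reached.
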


\begin{proof}
    Using the same arguments as in the proof of \Cref{thm:LN_det}, we obtain:
\begin{equation}
    \E[  \Lambda_{t+1}-  \Lambda_t|\cF_t,A_t]\leq -\sigma   \Lambda_t.
\end{equation}
However, this is not enough to conclude. Under $A_t^C$, we only know that $  \Lambda_{t+1}\leq   \Lambda_t$ (our local coordinate gradient steps cannot increase distance to the optimum). Hence:
\begin{equation}
    \E[  \Lambda_{t+1}|\cF_t]\leq (1-\sigma \mathbb{I}_{A_t})  \Lambda_t.
\end{equation}
And then, by induction:
\begin{equation}
    \E[  \Lambda_t]\leq \E[P_t\Lambda_0]  ,\text{ where } P_t=\prod_{s=0}^{t-1}(1-\sigma \mathbb{I}_{A_s}).
\end{equation}
However, no direct bound on $P_t$ exists. The interdependencies on the events $A_t$ make it impossible for an induction to prove a bound of the form $\leq (1-\sigma/2)^t$. However, the logarithm of the product seems easier to study:
\begin{equation}
    \log(P_t)=\log(1-\sigma)\sum_{s=0}^{t-1}\mathbb{I}_{A_s},
\end{equation}
giving us $\E\log(P_t)\leq\log(1-\sigma)t/2$, as $\P(A_t)\geq 1/2$. We are thus going to make a study in probability. For $t\in \N$, let $X_t=\frac{1}{T}\sum_{s=t}^{t+T-1}\mathbb{I}_{A_s}$. Using Markov-type inequalities conditionnaly on $\cF_t$ gives:
\begin{equation}
    \P(X_t\geq 1/3|\cF_t) +1/3\P(X_t\leq 1/3|\cF_t) \geq \E[ X_t|\cF_t] \geq 1/2 \implies \P(X_t\geq 1/3|\cF_t)\geq 1/4.
\end{equation}
Thus, we have: $\E[\prod_{s=t}^{t+T-1}(1-\mathbb{I}_{A_s}\sigma)|\cF_t]\leq \frac{1}{4}(1-\sigma)^{T/3}+\frac{3}{4}.$ We then know how to control $T$ consecutive factors of the product $P_t$. Skipping the next $T$ terms, we have:
\begin{align}
    \E\left[\prod_{s=t}^{t+3T-1}(1-\mathbb{I}_{A_s}\sigma)\right] & =\E\left[\prod_{s=t}^{t+T-1}(1-\mathbb{I}_{A_s}\sigma)\prod_{s=t+T}^{t+2T-1}(1-\mathbb{I}_{A_s}\sigma)\prod_{s=t+2T}^{t+3T-1}(1-\mathbb{I}_{A_s}\sigma)\right]\\
    & \leq \E\left[\prod_{s=t}^{t+T-1}(1-\mathbb{I}_{A_s}\sigma)\prod_{s=t+2T}^{t+3T-1}(1-\mathbb{I}_{A_s}\sigma)\right]\\
    & \leq \E\left[\prod_{s=t}^{t+T-1}(1-\mathbb{I}_{A_s}\sigma)\E^{\cF_{t+2T}}\left \{\prod_{s=t+2T}^{t+3T-1}(1-\mathbb{I}_{A_s}\sigma)\right\} \right]
\end{align}
as in the last right hand side, the first big product is $\cF_{t+2T}$-measurable (our asumption on the $A_s$ states that they are  $\cF_{s+T-1}$-measurable). Then, using inequality $\E\left[\prod_{s=t}^{t+T-1}(1-\mathbb{I}_{A_s}\sigma)|\cF_t\right]\leq \frac{1}{4}(1-\sigma)^{T/3}+\frac{3}{4}$ twice, with $t$ and $t+2T$, we get:
\begin{align*}
\E\left[\prod_{s=t}^{t+3T-1}(1-\mathbb{I}_{A_s}\sigma)\right]&\leq \E\left[\prod_{s=t}^{t+T-1}(1-\mathbb{I}_{A_s}\sigma) \left(\frac{1}{4}(1-\sigma)^{T/3}+\frac{3}{4}\right)\right]\\
&\leq\left(\frac{1}{4}(1-\sigma)^{T/3}+\frac{3}{4}\right)^2.\end{align*} Proceeding the same way by induction leads us to: 
\begin{equation}
    \E[P_t]\leq \left(\frac{1}{4}(1-\sigma)^{T/3}+\frac{3}{4}\right)^{\lfloor t/(2T) \rfloor},
\end{equation}
which is the desired bound. 
\end{proof}

From the proof, we thus have the following corollary.

\begin{corollary}
    Assume that, for all $k\in \N$, there exists a $\cF_{k+T-1}$-measurable event $A_k$, such that $\P(A_k|\cF_k)\geq \frac{1}{2}$ almost surely, and that under $A_k$, Assumption \ref{hyp} holds  for all $k\leq \ell \leq k+T-1$. Then, we have the following bound on $\Lambda_k(\xx)$, for any $k\geq 0$: 
\begin{equation*}
    \esp{\Lambda_{k+2T}(\xx)|\cF_k}\leq \left(\frac{1}{4}(1-\gamma)^{T/3}+\frac{3}{4}\right)\E[\Lambda_k(\xx)|\cF_k]\,.
\end{equation*}
where $\gamma$ is defined in \Cref{thm:LN_det}.

\end{corollary}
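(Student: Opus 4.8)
The plan is to extract from the proof of \Cref{thm:LN_sto} the single ``macro-step'' of length $2T$ that underlies its induction, stated conditionally on an arbitrary $\cF_k$. Write $\gamma$ for the spectral gap of \Cref{thm:LN_det} (the quantity called $\sigma$ in that proof) and set $c:=\tfrac14(1-\gamma)^{T/3}+\tfrac34<1$. First I would record, exactly as in that proof, the per-iteration estimate: unconditionally $\Lambda_{\ell+1}(\xx)\le\Lambda_\ell(\xx)$ for all $\ell$, because $\Lambda_\ell=F_A^*(\lambda^\ell)$ and a coordinate-gradient step on $F_A^*$ cannot increase it (local smoothness, \Cref{eq:local_smoothness}); while on the event $A_\ell$ --- on which \Cref{hyp} holds over the block $\{\ell,\dots,\ell+T-1\}$, so the descent computation of \Cref{thm:LN_det} applies --- one has $\E[\Lambda_{\ell+1}\mid\cF_\ell,A_\ell]\le(1-\gamma)\Lambda_\ell$. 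Combining the two cases gives $\E[\Lambda_{\ell+1}\mid\cF_\ell]\le(1-\gamma\mathds{1}_{A_\ell})\Lambda_\ell$, and since each factor $1-\gamma\mathds{1}_{A_\ell}$ lies in $[0,1]$ one unrolls this from $\ell=k+2T-1$ down to $\ell=k$ by the tower property, exactly as the product $P_t$ is built in the proof of \Cref{thm:LN_sto}, to obtain
\[
\E[\Lambda_{k+2T}(\xx)\mid\cF_k]\;\le\;\E\!\Big[\textstyle\prod_{s=k}^{k+2T-1}(1-\gamma\mathds{1}_{A_s})\cdot\Lambda_k(\xx)\;\Big|\;\cF_k\Big].
\]

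Next I would discard the first $T$ factors of the product (each in $[0,1]$) and use that $\Lambda_k(\xx)$ is $\cF_{k+T-1}$-measurable, hence $\cF_{k+T}$-measurable, so that conditioning on $\cF_{k+T}$ inside the expectation yields
\[
\E[\Lambda_{k+2T}(\xx)\mid\cF_k]\;\le\;\E\!\Big[\Lambda_k(\xx)\,\E\big[\textstyle\prod_{s=k+T}^{k+2T-1}(1-\gamma\mathds{1}_{A_s})\,\big|\,\cF_{k+T}\big]\;\Big|\;\cF_k\Big].
\]
It then remains to invoke the single-block bound already proved inside \Cref{thm:LN_sto}, applied at base time $k+T$: with $X:=\tfrac1T\sum_{s=k+T}^{k+2T-1}\mathds{1}_{A_s}$, the hypothesis $\P(A_s\mid\cF_s)\ge\tfrac12$ and the tower property give $\E[X\mid\cF_{k+T}]\ge\tfrac12$, a reverse-Markov step gives $\P(X\ge\tfrac13\mid\cF_{k+T})\ge\tfrac14$, and since $\prod_{s=k+T}^{k+2T-1}(1-\gamma\mathds{1}_{A_s})=(1-\gamma)^{TX}$ is at most $(1-\gamma)^{T/3}$ on $\{X\ge1/3\}$ and at most $1$ otherwise, one gets $\E\big[\prod_{s=k+T}^{k+2T-1}(1-\gamma\mathds{1}_{A_s})\mid\cF_{k+T}\big]\le c$. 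Plugging this in gives $\E[\Lambda_{k+2T}(\xx)\mid\cF_k]\le c\,\E[\Lambda_k(\xx)\mid\cF_k]$, which is the claim.

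The only delicate point --- and the one I expect to be the main obstacle --- is the measurability bookkeeping, identical in nature to that of \Cref{thm:LN_sto}: the events $A_s$ are only $\cF_{s+T-1}$-measurable (not $\cF_s$-measurable) and $\Lambda_k$ itself is not $\cF_k$-measurable, so the indicators of the first block cannot be decoupled from $\Lambda_k$ at time $k$, and the unrolling to the product form must be carried out with the same filtration conventions used in the proof of \Cref{thm:LN_sto}. Using a horizon of $2T$ rather than $T$ is precisely what creates a ``buffer'' block of length $T$ over which one can push $\Lambda_k$ through a conditional expectation at time $k+T$ while still retaining a full contracting block of length $T$ afterwards. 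Everything else is the routine reverse-Markov estimate already carried out there.
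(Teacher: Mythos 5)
Your proposal is correct and follows essentially the same route as the paper, which states the corollary as a direct extraction of one $2T$-macro-step from the proof of \Cref{thm:LN_sto}: the pathwise contraction $\Lambda_{\ell+1}\le(1-\gamma\mathds{1}_{A_\ell})\Lambda_\ell$, discarding a buffer block of $T$ factors so that $\Lambda_k$ (being $\cF_{k+T-1}$-measurable) can be pulled out at time $k+T$, and the single-block reverse-Markov bound applied to the remaining block. The measurability bookkeeping you flag is exactly the reason the paper works with blocks of length $2T$ (resp.\ skips every other block of $T$ terms), so your argument matches the intended one.
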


\subsection{Expliciting the constants in the loss networks model we consider}

We now need to compute and tune the constants introduced in \Cref{thm:LN_det} for the assumptions of \Cref{thm:LN_sto} to hold in our Loss Network model.
We begin by the following lemma, inspired by queuing theory arguments, that upper bound the probability that an edge stays inactivated for a long period of time.

Note that we here come back to continuous time, to study the loss network model. What is important to keep in mind is that an edge cannot be occupied for a time longer than $\tau'_\edgevw$.

\begin{lemma}\label{lem_queue_1}Let $\delta\in(0,1)$.
For any $t_0\geq 0$, $\edgevw\in E$, if the \emph{Poisson} intensities are such that $p_{\edgevw}=\frac{1}{2\max(d_i,d_j)-1}(  \tau'_{\edgevw})^{-1}$ and $ \tau'_{max}(\edgevw)=\max_{\edgeuu\sim \edgevw} \tau'_{\edgeuu}$, let:
\begin{equation*}
    \ell_{\edgevw}=\frac{\log(\delta^{-1})}{\log(1-(1-e^{-1})e^{-1})}(p_{\edgevw}^{-1}+ \tau'_{max}(\edgevw)) \,.
\end{equation*} We have:
\begin{equation}
    \P(\edgevw\text{ not activated in $[t_0,t_0+\ell_{\edgevw}]$}|\F_{t_0})\leq \delta.
\end{equation}
\end{lemma}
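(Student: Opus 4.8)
\textbf{Proof plan for Lemma~\ref{lem_queue_1}.}
The plan is to reduce the event ``$\edgevw$ not activated in $[t_0,t_0+\ell_{\edgevw}]$'' to a geometric-tail estimate by chopping the interval into ``attempt windows'' of controlled length, each of which has an \emph{independent} (by the strong Markov property / memorylessness of the exponential clocks) chance bounded below by a fixed constant of successfully activating $\edgevw$. Concretely, I would first observe that after any time at which $v$ or $w$ is free, the $\mathrm{Exp}(p_v)$ (resp.\ $\mathrm{Exp}(p_w)$) clock of that endpoint rings within a time that is stochastically dominated by an exponential of parameter $p_{\edgevw}$ up to the normalizing $\tfrac{1}{2\max(d_v,d_w)-1}$ factor built into the choice of $p_{\edgevw}$; this is where the precise value $p_{\edgevw}=\frac{1}{2\max(d_v,d_w)-1}(\tau'_{\edgevw})^{-1}$ enters, guaranteeing that the endpoint does not waste too much of its clock rate on its other $d_v-1$ (resp.\ $d_w-1$) incident edges. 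Second, I would bound the time that $v$ or $w$ can be \emph{busy} (with some other neighbor) before becoming free again by $\tau'_{\max}(\edgevw)=\max_{\edgeuu\sim\edgevw}\tau'_{\edgeuu}$, since by \Cref{hyp:LN} no communication+computation episode lasts longer than $\tau'_{\edgeuu}$.

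The core of the argument is then: in a window of length $\asymp p_{\edgevw}^{-1}+\tau'_{\max}(\edgevw)$, with probability at least $(1-e^{-1})e^{-1}$ the following good event occurs — within the first $p_{\edgevw}^{-1}$ of the window the clock of, say, $v$ rings (probability $\ge 1-e^{-1}$ for an exponential-type waiting time, using the rate lower bound from Step~1), AND at that ringing instant $w$ is free (probability $\ge e^{-1}$: we lower-bound the chance that $w$'s most recent busy episode, of length $\le\tau'_{\max}(\edgevw)$, has already ended, or that $w$ never started one, again via a memoryless/exponential comparison and the budget $\tau'_{\max}(\edgevw)$ reserved in the window). When both happen, $v$ picks $w$ with probability $p_{\edgevw}/p_v$, which is already absorbed into the per-attempt success constant. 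Hence each of the $\lfloor \ell_{\edgevw}/(p_{\edgevw}^{-1}+\tau'_{\max}(\edgevw))\rfloor$ disjoint windows fails to activate $\edgevw$ with conditional probability at most $1-(1-e^{-1})e^{-1}$, and by the Markov property these failure events compound multiplicatively:
\begin{equation*}
\P\big(\edgevw\text{ not activated in }[t_0,t_0+\ell_{\edgevw}]\,\big|\,\F_{t_0}\big)\leq \big(1-(1-e^{-1})e^{-1}\big)^{\lfloor \ell_{\edgevw}/(p_{\edgevw}^{-1}+\tau'_{\max}(\edgevw))\rfloor}.
\end{equation*}
Plugging in $\ell_{\edgevw}=\frac{\log(\delta^{-1})}{\log\big(1-(1-e^{-1})e^{-1}\big)^{-1}}(p_{\edgevw}^{-1}+\tau'_{\max}(\edgevw))$ — note the exponent in the displayed bound is then $\ge \log(\delta^{-1})/\log\big(1-(1-e^{-1})e^{-1}\big)^{-1}$ — makes the right-hand side $\le\delta$, which is the claim. (The sign bookkeeping: $\log(1-(1-e^{-1})e^{-1})<0$, so the fraction as written in the statement is positive.)

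The main obstacle is making Step~2 — the lower bound $e^{-1}$ on ``$w$ is free at the moment $v$'s clock rings'' — genuinely rigorous, because the busy/free status of $w$ is \emph{not} independent of $v$'s clock and is governed by the full Loss Network Markov chain rather than a single exponential. The clean way around this is the standard queuing-theoretic domination: one couples $w$'s occupation process with a fresh $M/D/1$-type (or simply: ``at most one busy episode of length $\le\tau'_{\max}$ initiated by an independent rate-$p_w$ clock'') upper process that ignores the actual scheduling details, and shows the real process is free whenever the dominating process is; the probability the dominating process is free at a given time, conditioned only on $\F_{t_0}$ and within a window that includes a $\tau'_{\max}(\edgevw)$ ``flush'' sub-interval, is bounded below by $e^{-1}$ (the chance the relevant exponential has fired / the episode has drained). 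This is where all the delay constants $\tau'_{\edgevw}=\tau_{\edgevw}+\max(\tau_v,\tau_w)$ are used, and it is the only place where the specific structure of the loss network (``busy endpoints cannot be re-selected'') is really exploited. Everything else is the geometric compounding above, which is routine.
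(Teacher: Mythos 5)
Your plan matches the paper's proof essentially step for step: the same decomposition of $[t_0,t_0+\ell_{\edgevw}]$ into windows of length $p_{\edgevw}^{-1}+\tau'_{\max}(\edgevw)$, the same per-window success probability $(1-e^{-1})e^{-1}$ obtained as (clock rings within $p_{\edgevw}^{-1}$) times (edge available), and the same geometric compounding via memorylessness. The ``queuing-theoretic domination'' you correctly flag as the delicate step is exactly how the paper handles it: it dominates the number of busy neighboring edges by an $M/M/\infty$ queue whose stationary occupancy is Poisson with parameter $\sum_{\edgeuu\sim\edgevw}p_{\edgeuu}\tau'_{\edgeuu}\leq 1$ (this is what the choice of $p_{\edgevw}$ buys), so that $\P(N_{\edgevw}(t)=0)\geq e^{-1}$ once a $\tau'_{\max}(\edgevw)$ ``flush'' sub-interval has elapsed.
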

\begin{proof}[Proof of Lemma \ref{lem_queue_1}]Let $\edgevw\in E$ and $t_0\geq 0$ fixed. We use tools from queuing theory \cite[$M/M/\infty/\infty$ queues]{Tanner1995queuing} in order to compute the probability that edge $\edgevw$ is activable at a time $t$ or not. More formally, we define a process $N_{\edgevw}(t)$ with values in $\N$, such that $N_{\edgevw}(t_0)=1$ if $\edgevw$ non-available at time $t_0$ and $0$ otherwise. Then, when an edge $\edgeuu$ such that $\edgeuu\sim \edgevw$ is activated, we make an increment of $1$ on $N_{\edgevw}(t)$ (a \emph{customer} arrives). This customer stays for a time $ \tau'_{\edgeuu} $ and when he leaves, $N_{\edgevw}$ is decreased by $1$. Thus $N_{\edgevw}\geq 0$ a.s., and if $N_{\edgevw}=0$, then edge $\edgevw$ is available. For $t\geq \max_{\edgeuu\sim \edgevw}  \tau'_{\edgeuu}  +t_0$, $N_{\edgevw}(t)$ follows a Poisson law of parameter $\sum_{\edgeuu\sim \edgevw}p_{\edgeuu} \tau'_{\edgeuu} $.  For any $t\geq \max_{\edgeuu\sim \edgevw}  \tau'_{\edgeuu}  +t_0$:
\begin{equation*}
    \P(\edgevw\text{ available at time }t|\F_{t_0})\geq \P(N_i(t)= 0)=\exp(-\sum_{\edgeuu\sim \edgevw}p_{\edgeuu} \tau'_{\edgeuu} ).
\end{equation*}
That leads to taking $p_{\edgeuu}=\frac{1}{2}\frac{1}{\max(d_k,d_l)-1}(  \tau'_{\edgeuu})^{-1}$ for all edges, in order to have $$\P(\edgevw\text{ available at time }t|\F_{t_0})\geq 1/e.$$ Then, $\P(\edgevw \text{ rings in } [t,t+p_{\edgevw}^{-1}])=1-e^{-1}$, giving:
\begin{align*}
    \P&(\edgevw \text{ activated in }[t_0,t_0+  \tau'_{\max}(\edgevw)+p_{\edgevw}^{-1}]|\F_{t_0})= \P(\edgevw \text{ rings in } [t,t+p_{\edgevw}^{-1}])\\
    &\times \P(\edgevw \text{ available at time }t|\F_{t_0},\text{$\edgevw$ rings at a time } t\in [t_0+  \tau'_{\max}(\edgevw),t_0+  \tau'_{\max}(\edgevw)+p_{\edgevw}^{-1}])\\
    & \geq (1-e^{-1})e^{-1},
\end{align*}
where we use the memoriless property of exponential random variables. Take $k\in \N$ such that $(1-(1-e^{-1})e^{-1})^k\leq \delta$, leading to $k= \log(6|E|)/\log(1-(1-e^{-1})e^{-1})$. Let $$\ell_{\edgevw}=k(p_{\edgevw}^{-1}+ \tau'_{max}(\edgevw) ).$$ Then we have a.s.:
\begin{equation}
    \P(\edgevw\text{ not activated in $[t_0,t_0+\ell_{\edgevw}]$}|\F_{t_0})\leq \delta.
\end{equation}
\end{proof}

Let $t\in \N$ be fixed, and $B_t$ be the event: "in the activations $t,t+1,...,t+T-1$, all edges are activated". Let then $C_t(\edgevw,s)$ for $t\leq s < t+T$ be the event $\min(T_{\edgevw}(s),t+T-s,s-t)\leq a\ell_{\edgevw}$ and $D_t(\edgeuu,\edgevw,s)$ be the event $N(\edgeuu,\edgevw,s)\leq \lceil b\ell_{\edgevw}/\ell_{\edgeuu} \rceil$, where $N(\edgeuu,\edgevw,s)$ is the number of activations of $\edgeuu$ between two activations of $\edgevw$, around time $s$, where we only take into account the activations between activations $t$ and $t+T-1$. Let then $A_t=B_t\cap (\cap_{\edgeuu,\edgevw\in E, t\leq s < t+T}C_t(\edgevw,s)\cap D_t(\edgeuu,\edgevw,s))$. 

We want $\P(A_t)\geq 1/2$ for correct constants $a,b,T$ and $\ell_{\edgevw}$ (that can differ from $\tau'_{\edgevw}$) in order to apply \Cref{thm:LN_det,thm:LN_sto}. Note that this event is $\F_{t+T-1}$-measurable, as desired. We first study the length of time $\ell_{\edgevw}$ edge $\edgevw$ must wait in order to be activated with high probability (\emph{high} meaning more that $1-\frac{1}{12|E|}$). This result is Lemma \ref{lem_queue_1}. Then, we use this length to determine the constants $T,a,b,\ell_{\edgevw}$ needed.\\

\begin{lemma}
For any continuous time $t_0\geq 0$, $\edgevw\in \cE$, if $p_{\edgevw}=\frac{1}{2\max(d_i,d_j)-1}(\tau'_{\edgevw})^{-1}$ and $\tau'_{max}(\edgevw)=\max_{\edgeuu\sim \edgevw}\tau'_{\edgeuu}$, let $\ell_{\edgevw}=\frac{\log(6|E|)}{\log(1-(1-e^{-1})e^{-1})}(p_{\edgevw}^{-1}+\tau'_{max}(\edgevw))$. We have, almost surely:
\begin{equation}
    \P(\edgevw\text{ not activated in $[t_0,t_0+\ell_{\edgevw}]$}|\F_{t_0})\leq \frac{1}{6|E|}.
\end{equation}
\end{lemma}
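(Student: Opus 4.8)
The plan is to model the availability of a fixed edge $\edgevw$ by a birth–death / $M/M/\infty$ queue whose ``customers'' are the activations of edges incident to $\edgevw$, and then combine a stationary availability bound with the memorylessness of the Poisson clocks. First I would fix $t_0\ge0$ and $\edgevw\in\cE$, and define a counting process $N_\edgevw(t)$ that increments by $1$ whenever an incident edge $\edgeuu\sim\edgevw$ is activated, the added customer staying for the (bounded) busy duration $\tau'_{\edgeuu}$ before departing; by construction $\edgevw$ is available exactly when $N_\edgevw(t)=0$. Since incident edges ring at the times of independent Poisson processes of intensity $p_{\edgeuu}$, for $t\ge t_0+\max_{\edgeuu\sim\edgevw}\tau'_{\edgeuu}$ the variable $N_\edgevw(t)$ is stochastically dominated by (in fact distributed as) a Poisson random variable of mean $\sum_{\edgeuu\sim\edgevw}p_{\edgeuu}\tau'_{\edgeuu}$, so that $\P(\edgevw\text{ available at }t\mid\F_{t_0})\ge\exp\big(-\sum_{\edgeuu\sim\edgevw}p_{\edgeuu}\tau'_{\edgeuu}\big)$.

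Next I would plug in the prescribed intensities $p_{\edgeuu}=\frac{1}{(2\max(d_i,d_j)-1)\tau'_{\edgeuu}}$: each incident summand $p_{\edgeuu}\tau'_{\edgeuu}$ is then at most $\frac{1}{2d_v-1}$ (and likewise bounded using $d_w$), and there are at most $d_v+d_w-1$ distinct edges incident to $\edgevw$ once we avoid double-counting $\edgevw$ itself, so $\sum_{\edgeuu\sim\edgevw}p_{\edgeuu}\tau'_{\edgeuu}\le1$, giving $\P(\edgevw\text{ available at }t\mid\F_{t_0})\ge e^{-1}$ uniformly in $t$ and conditionally on $\F_{t_0}$. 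Then, over any window of length $p_\edgevw^{-1}$ the clock of $\edgevw$ rings with probability $1-e^{-1}$, and conditioning on the ring time (which, by the memoryless property of the exponential, does not disturb the availability estimate at that time), the probability that $\edgevw$ is \emph{both} available and ringing in a window of length $\tau'_{\max}(\edgevw)+p_\edgevw^{-1}$ starting at $t_0$ is at least $(1-e^{-1})e^{-1}$. Iterating this over $k$ disjoint consecutive windows and using independence across windows (again via memorylessness / strong Markov property at window boundaries), the probability that $\edgevw$ is never activated is at most $\big(1-(1-e^{-1})e^{-1}\big)^{k}$; choosing $k=\big\lceil\log(6|E|)/\log\big(1-(1-e^{-1})e^{-1}\big)^{-1}\big\rceil$ (equivalently the displayed $\log(6|E|)/\log(1-(1-e^{-1})e^{-1})$ up to the sign convention) makes this at most $\frac1{6|E|}$, and setting $\ell_\edgevw=k(p_\edgevw^{-1}+\tau'_{\max}(\edgevw))$ yields the claim.

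The main obstacle, and the step that deserves the most care, is the second one: rigorously justifying that $N_\edgevw(t)$ really is (dominated by) the stationary $M/M/\infty$ occupancy with the stated mean, despite the fact that the busy durations $\tau'_{\edgeuu}$ are only \emph{upper bounds} on the true occupancy times and the true process has the extra complication that an incident edge cannot ring again while it is itself busy (so the arrival stream is thinned, not a genuine Poisson process). The clean way around this is a coupling/monotonicity argument: replace each true busy duration by the deterministic upper bound $\tau'_{\edgeuu}$ and allow every clock ring to generate a customer regardless of whether the edge was busy; this only increases $N_\edgevw(t)$ pathwise, so the availability lower bound is preserved, and the dominating process is exactly an $M/M/\infty$ queue (deterministic service times, Poisson arrivals) whose time-$t$ occupancy for $t$ past the warm-up is Poisson with mean $\sum_{\edgeuu\sim\edgevw}p_{\edgeuu}\tau'_{\edgeuu}$. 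The remaining bookkeeping — counting incident edges to get the sum $\le 1$, and the geometric iteration over windows — is routine.
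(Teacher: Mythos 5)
Your proposal follows essentially the same route as the paper's proof: the same $M/M/\infty$ queue $N_\edgevw(t)$ counting busy incident edges, the same stationary bound $\P(N_\edgevw(t)=0)\geq \exp(-\sum_{\edgeuu\sim\edgevw}p_\edgeuu\tau'_\edgeuu)\geq e^{-1}$ after a warm-up of length $\tau'_{\max}(\edgevw)$, the same $(1-e^{-1})e^{-1}$ per-window activation probability via memorylessness, and the same geometric iteration over $k$ windows to reach $1/(6|E|)$. Your explicit coupling/monotonicity argument (letting every clock ring spawn a customer with deterministic service time $\tau'_\edgeuu$, which only increases $N_\edgevw$ pathwise) is in fact a more careful justification of the Poisson-occupancy step than the paper gives, which simply asserts the Poisson law.
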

\begin{proof}[Proof of Lemma \ref{lem_queue_1}]Let $\edgevw\in E$ and $t_0\geq 0$ fixed. We use tools from queuing theory \citep{Tanner1995queuing} ($M/M/\infty/\infty$ queues) in order to compute the probability that edge $\edgevw$ is activable at a time $t$ or not. More formally, we define a process $N_{\edgevw}(t)$ with values in $\N$, such that $N_{\edgevw}(t_0)=1$ if $\edgevw$ non-available at time $t_0$ and $0$ otherwise. Then, when an edge $\edgeuu,\edgeuu\sim \edgevw$ is activated, we make an increment of $1$ on $N_{\edgevw}(t)$ (a \emph{customer} arrives). This customer stays for a time $\tau'_{\edgeuu}$ and when he leaves we make $N_{\edgevw}$ decrease by $1$. We have $N_{\edgevw}\geq 0$ a.s., and if $N_{\edgevw}=0$, $\edgevw$ is available. For $t\geq \max_{\edgeuu\sim \edgevw} \tau'_{\edgeuu} +t_0$, $N_{\edgevw}(t)$ follows a Poisson law of parameter $\sum_{\edgeuu\sim \edgevw}p_{\edgeuu}\tau'_{\edgeuu}$.  For any $t\geq \max_{\edgeuu\sim \edgevw} \tau'_{\edgeuu} +t_0$:
\begin{equation}
    \P(\edgevw\text{ available at time }t|\F_{t_0})\geq \P(N_i(t)= 0)=\exp(-\sum_{\edgeuu\sim \edgevw}p_{\edgeuu}\tau'_{\edgeuu}).
\end{equation}
That leads to taking $p_{\edgeuu}=\frac{1}{2}\frac{1}{\max(d_k,d_l)-1}(\tau'_{\edgeuu})^{-1}$ for all edges, in order to have $\P(\edgevw\text{ available at time }t|\F_{t_0})\geq 1/e$. Then, $\P(\edgevw \text{ rings in } [t,t+p_{\edgevw}^{-1}])=1-e^{-1}$, giving:
\begin{align}
    \P(\edgevw &\text{ activated in }[t_0,t_0+\tau'_{\max}(\edgevw)+p_{\edgevw}^{-1}]|\F_{t_0})= \P(\edgevw \text{ rings in } [t,t+p_{\edgevw}^{-1}])\\
    &\times \P(\edgevw \text{ available at time }t|\F_{t_0},\text{$\edgevw$ rings at a time}\\
    & t\in [t_0+\tau'_{\max}(\edgevw),t_0+\tau'_{\max}(\edgevw)+p_{\edgevw}^{-1}])\\
    & \geq (1-e^{-1})e^{-1},
\end{align}
where we use the fact that exponential random variables have no memory. Take $k\in \N$ such that $(1-(1-e^{-1})e^{-1})^k\leq \frac{1}{6|E|}$, leading to $k\approx \log(6|E|)/\log(1-(1-e^{-1})e^{-1})$. Let $\ell_{\edgevw}=k(p_{\edgevw}^{-1}+\tau'_{max}(\edgevw))$. Then we have a.s.:
\begin{equation}
    \P(\edgevw\text{ not activated in $[t_0,t_0+\ell_{\edgevw}]$}|\F_{t_0})\leq \frac{1}{6|E|}.
\end{equation}
\end{proof}

\noindent \textbf{Bounding $T$:} A direct application of \Cref{lem_queue_1} leads, with $L=\max_{\edgevw}\ell_{\edgevw}$, to:
\begin{equation}
    T=2\sum_{\edgevw}\frac{L}{\tau'_{\edgevw}}.
\end{equation}
Indeed, for all $\edgevw$, not being activated in activations $t,t+1,...,t+T-1$ means not being activated for a continuous interval of time of length more than $\ell_{\edgevw}$. Hence:
\begin{align}
    &\P(\exists (\edgevw)\in E: (\edgevw)\text{ not activated in }\{t,...,t+T-1\}|\F_t)\\
    &\leq \sum_{\edgevw\in E}\P((\edgevw)\text{ not activated in }\{t,...,t+T-1\}|\F_t)\\
    &\leq \sum_{\edgevw\in E}\P((\edgevw)\text{ not activated in }[t,t+\ell_{\edgevw}]|\F_t)\\
    &\leq |E|\times \frac{1}{6|E|}\\
    &=1/6. \label{eq:B_t}
\end{align}

\noindent \textbf{Bounding $T_{\edgevw}$:} Applying Lemma \ref{lem_queue_1} with $12|E|T$ instead of $6|E|$ leads to controlling all the inactivation lengths by a length $\ell'_{\edgevw}$, with a probability more than $1-1/(12|E|T)$. Let $\edgevw\in E$ and $s\in \N$, $t\leq s < t+T$. Let $\alpha>0$ to tune later. Denote by $\delta_{\edgevw}(s)$ the (random) inactivation time of $\edgevw$, around iteration $s$. Note that conditionnaly on the inactivation period $\delta_{\edgevw}(s)$, $T_{\edgevw}(s)$ is dominated in law by a Poisson variable of parameter $I\delta_{\edgevw}(s)$, hence line \eqref{eq:poissondelta}:
\begin{align}
    \P(T_{\edgevw}(s)\geq \alpha \ell_{\edgevw}'|\F_t)&\leq\P(T_{\edgevw}(s)\geq \alpha \ell_{\edgevw}'|\F_t, \delta_{\edgevw}\leq \ell_{\edgevw}')\times \P(\delta_{\edgevw}\leq \ell_{\edgevw}') + \P(\delta_{\edgevw}\geq \ell_{\edgevw}')\\
    &\leq \P(Poisson(I\ell'_{\edgevw})\geq \alpha \ell_{\edgevw}') + \frac{1}{12|E|T} \quad (\text{where} \quad I=\sum_{\edgevw\in\cE}p_\edgevw)\label{eq:poissondelta}\\
    &\leq \frac{1}{12|E|T}+\frac{1}{12|E|T}\label{eq:wantedpoisson}\\
    &=\frac{1}{6|E|T},
\end{align}
for some $\alpha>0$ big enough, to determine with the following large deviation inequality:
\begin{lemma}[A Large Deviation Inequality on discrete Poisson variables.] \label{poisson}Let $Z\sim Poisson(\lambda)$, for some $\lambda>0$. Then, for all $u\geq 0$:
\begin{equation}
    \P(Z\geq u)\leq \exp(-u+\lambda(e-1)).
\end{equation}
\end{lemma}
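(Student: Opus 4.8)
The plan is to use the classical Chernoff bounding method: bound $\P(Z \geq u)$ by an exponential moment via Markov's inequality and then exploit the closed form of the Poisson moment generating function. The one twist is that I would \emph{not} optimize over the free parameter, but simply evaluate the exponential moment at $t = 1$, which returns precisely the stated bound.

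First I would recall (or quickly rederive by summing the Poisson series, all of whose terms are nonnegative) that for $Z \sim \mathrm{Poisson}(\lambda)$ and any $t \in \R$,
\begin{equation*}
    \E\!\left[e^{tZ}\right] = e^{-\lambda}\sum_{k \geq 0}\frac{(\lambda e^t)^k}{k!} = \exp\!\left(\lambda(e^t - 1)\right)\,.
\end{equation*}
Then, for any $u \geq 0$, since $x \mapsto e^x$ is positive and increasing, Markov's inequality applied to the nonnegative random variable $e^{Z}$ yields
\begin{equation*}
    \P(Z \geq u) = \P\!\left(e^{Z} \geq e^{u}\right) \leq e^{-u}\,\E\!\left[e^{Z}\right] = \exp\!\left(-u + \lambda(e - 1)\right)\,,
\end{equation*}
which is exactly the claimed inequality.

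I do not expect any genuine obstacle here: the argument is two lines. The only real decision is the choice of the exponential-moment parameter; optimizing over $t > 0$ would give the sharper (but less clean) bound of the form $\exp(u - \lambda - u\ln(u/\lambda))$ valid for $u > \lambda$, whereas the cruder choice $t = 1$ is exactly what is needed to match the form used downstream in \Cref{eq:wantedpoisson}, where $u$ (of order $\alpha\ell'_{\edgevw}$) and $\lambda(e-1)$ (of order $I\ell'_{\edgevw}$) are compared directly and $\alpha$ is then taken large enough.
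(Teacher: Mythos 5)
Your proof is correct: applying Markov's inequality to $e^{Z}$ and evaluating the Poisson moment generating function at $t=1$ gives exactly $\P(Z\geq u)\leq e^{-u}\E[e^Z]=\exp(-u+\lambda(e-1))$. The paper states this lemma without proof, and your two-line Chernoff argument is precisely the standard derivation one would supply.
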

\noindent This large deviation leads to taking $\alpha=2eI$ for \eqref{eq:wantedpoisson} to be true. Finally, we get:
\begin{equation}
    \P(T_{\edgevw}(s)\geq \alpha \ell_{\edgevw}'|\F_t)\leq \frac{1}{6|E|T}. \label{eq:C_t}
\end{equation}

\noindent \textbf{Bounding $N(\edgeuu,\edgevw,s)$:} If $\delta_{\edgevw}(s)\leq \ell'_{\edgevw}$, this random variable is dominated by a Poisson variable of parameter $p_{\edgeuu}\ell_{\edgevw}'$. Hence, still with Lemma \ref{poisson}, with probability more than $1-\frac{1}{12|E|^2T}$, we can bound $N(\edgeuu,\edgevw)$ by $e\log(12|E|^2T)+p_{\edgeuu}\ell_{\edgevw}(e-1)\leq 2 e p_{\edgeuu}L_{\edgevw}$.\\

\noindent \textbf{Explicit writing of the union bound on $A_t^C$:} $A_t^C=B_t^C\cup (\cup_{\edgeuu,\edgevw\in E, t\leq s < t+T}C_t(\edgevw,s)^C\cup D_t(\edgeuu,\edgevw,s)^C)\in \F_{t+T-1}$. Thanks to the previous considerations, we have that $\P^{\F_t}(B_t^C) \leq 1/6$ with \eqref{eq:B_t}, $\P^{\F_t}(C_t(\edgevw,s)^C)\leq \frac{1}{6|E|T}$ with \eqref{eq:C_t} and $\P(D_t(\edgeuu,\edgevw,s)^C|\F_t)\leq \frac{1}{6|E|^2T}$, for the following constants and weights:
\begin{itemize}
    \item $\Tilde{\tau'}_{\edgevw}^{-1}=p_{\edgevw}=\min(\frac{1}{\tau'_{\max}(\edgevw)},\frac{1}{2(\max(d_i,d_j)-1)}\frac{1}{\tau'_{\edgevw}})$;
    \item $T=2 I \max_{\edgevw\in E}\Tilde{\tau'_{\edgevw}}\frac{\log(6|E|)}{\log(1-(1-e^{-1})e^{-1})}$;
    \item $a=2eI\frac{\log(6|E|T)}{\log(1-(1-e^{-1})e^{-1})}$;
    \item $b=2e\frac{\log(6|E|T)}{\log(1-(1-e^{-1})e^{-1})}$.
\end{itemize}
The union bound is the following:
\begin{align}
    \P^{\F_t}(A_t^C) & \leq  \P^{\F_t}(B_t^C) +\sum_{s,\edgevw}\P^{\F_t}(C_t(\edgevw,s)^C) +\sum_{s,\edgevw}\P^{\F_t}(\cup_{\edgeuu}D_t(\edgeuu,\edgevw,s)^C)\\
    & \leq 1/6 + |E|T/(6|E|T)\times 2\\
    & \leq 1/2.
\end{align}

\noindent The rate of convergence $\gamma$ is then defined as the smallest non null eigenvalue of the laplacian of the graph, weighted by:
\begin{equation}\label{eq:rho_ln}
   \nu_{\edgevw}= \frac{p_{\edgevw}\min_{\edgeuu\sim \edgevw}\frac{\tau'_{\edgevw}}{\tau_{\edgeuu}}}{8a(1+d^2b)}=\frac{\min_{\edgeuu\sim \edgevw}p_{\edgeuu}}{c_1 \ln(6|\cE|T) (1+d^2 \ln(6|\cE|T)^2)\sum_{\edgeuu\in\cE}p_\edgeuu}
\end{equation}

\subsection{Concluding}

What we have proved so far, is that for any $k\geq 0$, any $\xx\in\R^\cV$, we have:
\begin{equation*}
    \esp{\Lambda_{k+2T}(\xx)|\cF_k}\leq \left(\frac{1}{4}(1-\gamma)^{T/3}+\frac{3}{4}\right)\E[\Lambda_k(\xx)|\cF_k]\,,
\end{equation*}
where $\gamma$ is defined in \Cref{eq:rho_ln}. Then, $\Lambda_{k+2T}(\xx)\geq \frac{1}{2}\NRM{W^{(0,k+2T)}(\xx-\bar\xx)}^2$ and $\Lambda_k(\xx)\leq \frac{1}{2}\NRM{W^{(0,k)}(\xx-\bar\xx)}^2$, so that applying this for $k=0$, almost surely conditionned on $\cF_0$,
\begin{equation*}
    \esp{\NRM{W^{(0,2T)}(\xx-\bar\xx)}^2|\cF_0}\leq \left(\frac{1}{4}(1-\gamma)^{T/3}+\frac{3}{4}\right)\E[\NRM{\xx-\bar\xx}^2|\cF_0]\,,
\end{equation*}
Now, noticing that our analysis holds almost surely for any configuration $\cF_0$, doing a time translation and starting from a configuration $\cF_k$ for any $k$, we get that:
\begin{equation*}
    \esp{\NRM{W^{(k,k+2T)}(\xx-\bar\xx)}^2|\cF_k}\leq \left(\frac{1}{4}(1-\gamma)^{T/3}+\frac{3}{4}\right)\E[\NRM{\xx-\bar\xx}^2|\cF_k]\,,
\end{equation*}
so that \Cref{hyp:consensus} holds for $\rho=\frac{1}{4}(1-(1-\gamma)^{T/3}),k_\rho=2T$, and hence $\frac{\rho}{k_\rho}=\cO(\gamma)$, which leads to \Cref{thm:LN}:
$\gamma$ is the eigengap of the graph, with weights of order $\tilde\cO(\frac{\min_{\edgeuu\sim\edgevw}p_\edgeuu}{d^2\sum_{\edgeuu\in\cE}p_\edgeuu})$.

\section{Proof of Theorem~\ref{thm:lip-conv}: Convex-Lipchitz case}

\subsection{Homogeneous setting, Lipschitz (bounded gradients) and convex without sampling}

\begin{proof}
    Studying the virtual sequence, we expand:
    \begin{align*}
        \esp{\NRM{\hat x^{k+1}- x^\star}^2} &= \esp{ \NRM{\hat x^k- x^\star}^2 - \frac{2\gamma}{n} \sum_{v\in\cI_k}\langle \nabla f_v( x_{v}^k),\hat x^k- x^\star\rangle + \frac{\gamma^2}{n^2}\NRM{\sum_{v\in\cI_k} g^k_v}^2 }\\
        &\leq \esp{ \NRM{\hat  x^k- x^\star}^2 - \frac{2\gamma}{n} \sum_{v\in\cI_k}\langle \nabla f_v( x_{v}^k), x_{v}^k- x^\star\rangle + \frac{2\gamma}{n} \sum_{v\in\cI_k}\langle \nabla f_v( x_{v}^k), x_{v}^k-\bar x^k\rangle + \frac{2\gamma}{n} \sum_{v\in\cI_k} \langle \nabla f_v( x_{v}^k),\bar x^k-\hat x^k\rangle }\\
        &\quad + \frac{\gamma^2B^2\nk^2}{n^2}\,,
    \end{align*}
    where we used the Lipschitz assumption, $\E \gg_v^k = \nabla f_v(x_v^k)$ and boundness of gradients.
    Denote:
    \begin{align*}
        T_1&= - \frac{2\gamma}{n} \sum_{v\in\cI_k} \langle \nabla f_v( x_{v}^k), x_{v}^k- x^\star\rangle \\
        T_2^k&= \frac{2\gamma}{n}\sum_{v\in\cI_k} \langle \nabla f_v( x_{v}^k), x_{v}^k-\bar x^k\rangle \\
        T_3&= \frac{2\gamma}{n}\sum_{v\in\cI_k} \langle \nabla f_v( x_{v}^k),\bar x^k-\hat x^k\rangle \,.\\
    \end{align*}
        Using convexity of $f$, 
    \begin{equation*}
        T_1\leq -\frac{2\gamma}{n} \sum_{v\in\cI_k} (f( x_{v}^k) -f( x^\star))\,.
    \end{equation*}
    Using the Lipschitz assumption and Equation~\eqref{eq:bound_virtual_B} that controls $\NRM{\bar x^k-\hat x^k}$, we bound $T_3$::
    \begin{align*}
        T_3\leq \frac{2\gamma^2B^2\nk}{n}\,.
    \end{align*}
    Using the Lipschitz assumption and our consensus bound from Equation~\eqref{eq:bound_consensus_B}, we bound $T_2^k$:
    \begin{align*}
        \sum_{k<K}T_2^k&\leq \sum_{k<K}\frac{2\gamma B}{n}\sqrt{\sum_{v\in\cI_k}\esp{\NRM{ x_{v}^k-\bar x^k}^2}}\\
        &\leq\sum_{k<K}\frac{2\gamma B}{n}\sqrt{\esp{\NRM{ \xx^k-\bar \xx^k}^2}}\\
        &\leq \sum_{k<K}\frac{\gamma^2B^2}{n\bar\rho} + \frac{\bar\rho}{B}\esp{\NRM{ \xx^k-\bar \xx^k}^2} \\
        &\leq \frac{3\gamma^2B^2}{n\bar\rho}\NK\,.
    \end{align*}
    Consequently, denoting $\eta=\frac{\gamma}{n}$ and summing over $k<K$,
    \begin{align*}
        2\eta\sum_{k<K}\sum_{v\in\cI_k}\esp{f( x_{v}^k)-f( x^\star)}&\leq \esp{ \NRM{\hat  x^0- x^\star}^2} + \eta^2B^2\left( \E\nk + 2n + 3n\bar\rho^{-1}\right)\NK\\
        &\leq \esp{ \NRM{\hat  x^0- x^\star}^2} + \eta^2B^2\left( 3n + 3n\bar\rho^{-1}\right)\NK\,.
    \end{align*}
    Dividing by $2\eta\NK$,
    \begin{equation*}
        \esp{\frac{1}{\NK}\sum_{k<K}\sum_{v\in\cI_k}f\left( x_{v}^k\right)-f( x_\star)}\leq \frac{\esp{\NRM{\hat  x^0- x^\star}^2}}{2\eta \NK} + \frac{\eta B^2}{2}( 3n + 3n\bar\rho^{-1})\,,
    \end{equation*}
    and 
    \begin{align*}
        \esp{\NRM{\hat  x^0- x^\star}^2}&\leq \NRM{ x^0- x^\star}^2 -2\eta\sum_{v\in\cV}\langle \nabla f( x^0), x^0- x^\star\rangle + \eta^2G^2/n\\
        &\leq \NRM{ x^0- x^\star}^2 + \eta^2B^2/K\,,
    \end{align*}
    provided that $K\geq n$.
    Optimizing over $\eta$, we obtain that for $\eta=\sqrt{\frac{D^2}{2KB^2( 3n + 2n\bar\rho^{-1})}}$,
    \begin{equation*}
        \esp{f\left(\frac{1}{\NK}\sum_{k=0}^{K-1} \sum_{v\in\cI_k}x_{v}^k\right)-f( x_\star)}\leq 2\sqrt{\frac{2B^2D^2( 3n + 2n\bar\rho^{-1})}{\NK}}\,.
    \end{equation*}
    \end{proof}

\subsection{Lipschitz (bounded gradients) and convex with sampling}

\begin{proof}
    Taking the proof just above, we still have
    \begin{align*}
        \esp{\NRM{\hat  x^{k+1}- x^\star}^2} \leq \esp{ \NRM{\hat  x^k- x^\star}^2 +T_1^k+T_2^k+T_3 } + \frac{\gamma^2B^2\nk^2}{n^2}\,.
    \end{align*}
    We have, using convexity and then Lipschitzness:
    \begin{align*}
        T_1^k&=- \frac{2\gamma}{n} \sum_{v\in\cI_k} \langle \nabla f_v( x_{v}^k), x_{v}^k- x^\star\rangle\\
        &\leq -\frac{2\gamma}{n}  \sum_{v\in\cI_k}p_v f_v( x_{v}^k) -f( x^\star)\\
        & =  -\frac{2\gamma}{n}  \sum_{v\in\cI_k}p_v f_v( \bar x^k) -f( x^\star) + f_v( x_{v}^k) -f( \bar x^k)\\
        & \leq  -\frac{2\gamma}{n}  \sum_{v\in\cI_k}f_v( \bar x^k) -f( x^\star) - B\NRM{ x_{v}^k-\bar x^k}\,,
    \end{align*}
    so that
    \begin{align*}
        \esp{T_1^k}&\leq -\frac{2\gamma\bar p }{n} (\E f( \bar x^k) -f( x^\star)) + \frac{2\gamma B}{n}\sum_{v\in\cV}p_v\NRM{x_{v}^k-\bar x^k}\\
        & \leq -\frac{2\gamma\bar p }{n} (\E f( \bar x^k) -f( x^\star)) + \frac{2\gamma B p_{\max}}{n}\sqrt{n}\NRM{\xx^k-\bar\xx^k}\,.
    \end{align*}
    We then have that:
    \begin{equation*}
        \sum_{k<K}\frac{2\gamma B p_{\max}}{n}\sqrt{n}\NRM{\xx^k-\bar\xx^k} \leq 2\sqrt{2} \frac{\gamma^2B^2p_{\max}}{\sqrt{n}}\sqrt{K\NK}\,.
    \end{equation*}
    Then,
    \begin{align*}
        T_3 \leq \frac{2\gamma^2B^2}{n}\,.
    \end{align*}
    We handle the consensus term differently. For some $\alpha>0$ to be fix later, and taking the expectation conditionnally on $\xx^k$,
    \begin{align*}
        \sum_{k<K}\esp{T_2^k}&\leq \sum_{k<K}\sum_{v\in\cI_k}\esp{\frac{\gamma^2}{n\alpha}\NRM{\nabla f(x_{v}^k)}^2+ \frac{\alpha}{n}\NRM{x_{v}^k-\bar x^k}^2}\\
        &\leq\sum_{k<K}\frac{\gamma^2B^2\nk}{\alpha n}+ \frac{\alpha}{n}\sum_{v\in\cV}p_v\NRM{x_{v}^k-\bar x^k}^2\\
        &\leq\frac{\gamma^2B^2}{\alpha n}\NK+ \frac{\alpha p_{\max}}{n}\sum_{k<K}\E\NRM{\xx^k-\bar \xx^k}^2\\
        &\leq \left(\frac{\gamma^2B^2}{\alpha n} + \frac{\alpha p_{\max}}{n}\frac{2\gamma^2B^2}{\bar\rho^2}\right)\NK\,.
    \end{align*}
    We set $\alpha=1/\sqrt{p_{\max}\bar\rho^{-2}}$, so that:
    \begin{equation*}
        \sum_{k<K}\esp{T_2^k}\leq 2\frac{\gamma^2B^2}{n^2} \times \sqrt{p_{\max}}n\bar\rho^{-1}\times \NK\,.
    \end{equation*}
    The rest of the proof then follows as before, and we obtain
        \begin{equation*}
            \esp{f\left(\frac{1}{\NK}\sum_{k=0}^{K-1}\sum_{v\in\cI_k} x_{v}^k\right)-f( x^\star)}=\cO\left(\sqrt{\frac{B^2D^2}{\NK}(n+(p_{\max})^{1/2}n\bar\rho^{-1} + n^{3/2}p_{\max} \sqrt{\frac{K}{\NK}} )}\right)\,.
        \end{equation*}
        To conclude, we notice that $\frac{nK}{\NK}$ is of order $1/\bar p$ where $\bar p= \frac{1}{n}\sum_{v\in\cV}$.
\end{proof}

\section{Proof of \Cref{thm:lip-smooth-conv}: smooth-Lipschitz-convex rates}

\subsection{Smooth-Lipschitz-convex rates without sampling, homogeneous case}

\begin{proof}
    As before, we have:
    \begin{align*}
        \esp{\NRM{\hat  x^{k+1}- x^\star}^2} \leq \esp{ \NRM{\hat  x^k- x^\star}^2+ T_1+T_2^k+T_3 }  + \frac{\gamma^2\sigma^2\nk + \gamma^2\E\NRM{\sum_{v\in\cI_k}\nabla f_v(x_{v}^{k})}^2}{n^2}\,,
    \end{align*}
    with
    \begin{align*}
        T_1&= - \frac{2\gamma}{n} \sum_{v\in\cI_k} \langle \nabla f_v( x_{v}^k), x_{v}^k- x^\star\rangle \\
        T_2^k&= \frac{2\gamma}{n}\sum_{v\in\cI_k} \langle \nabla f_v( x_{v}^k), x_{v}^k-\bar x^k\rangle \\
        T_3&= \frac{2\gamma}{n}\sum_{v\in\cI_k} \langle \nabla f_v( x_{v}^k),\bar x^k-\hat x^k\rangle \,.\\
    \end{align*}
    First, using convexity of $f_v\equiv f$, 
    \begin{equation*}
        T_1\leq -\frac{2\gamma}{n} \sum_{v\in\cI_k} (f_v( x_{v}^k) -f_v( x^\star))=-\frac{2\gamma}{n} \sum_{v\in\cI_k} (f( x_{v}^k) -f( x^\star))\,.
    \end{equation*}
    Using Assumption~\ref{hyp:consensus2} we have, where $C>0$ can be arbitrary:
    \begin{align*}
        \esp{T_2^k}&\leq \frac{2\gamma}{n} \sum_{v\in\cI_k} \esp{\NRM{\nabla f( x_{v}^k)}\NRM{ x_{v}^k-\bar x^k}}\\
        &\leq \frac{C\gamma}{n}\sum_{v\in\cI_k}\esp{\NRM{\nabla f( x_{v}^k)}^2} + \frac{\gamma}{Cn}\esp{\sum_{v\in\cI_k}\NRM{ x_{v_k}^k-\bar x^k}^2}\\
        &\leq \frac{2LC\gamma}{n}\sum_{v\in\cI_k}\esp{(f( x_{v}^k)-f( x^\star))} + \frac{\gamma}{Cn}\esp{\NRM{ \xx^k-\bar \xx^k}^2}\,. 
    \end{align*}    
    We also have:
    \begin{align*}
        T_3 &\leq \frac{\gamma}{n}\Big( C\sum_{v\in\cI_k}\NRM{\nabla f( x_{v}^k)}^2 +\frac{1}{C}\NRM{\bar x^k-\hat x^k}^2 \Big)\\
        &\leq \frac{\gamma}{n}\Big( 2LC\sum_{v\in\cI_k}(f( x_{v}^k)-f( x^\star)) +\frac{\gamma^2B^2}{C}\nk \Big)\,.
    \end{align*}
    Thus,
    \begin{align*}
        \frac{2\gamma}{n} \sum_{v\in\cI_k} (\E f( x_{v}^k) -f( x^\star)) &\leq -\esp{\NRM{\hat  x^{k+1}- x^\star}^2} +\esp{ \NRM{\hat  x^k- x^\star}^2} + \frac{\gamma^2\sigma^2\nk}{n^2} + \frac{2\gamma^2 L\nk}{n^2} \sum_{v\in\cI_k} (\E f( x_{v}^k) -f( x^\star)) \\
        & \quad + \frac{2LC\gamma}{n}\sum_{v\in\cI_k}\esp{(f( x_{v}^k)-f( x^\star))} + \frac{\gamma}{Cn}\esp{\NRM{ \xx^k-\bar \xx^k}^2}\\
        &\quad + \frac{\gamma}{n}\Big( 2LC\sum_{v\in\cI_k}(f( x_{v}^k)-f( x^\star)) +\frac{\gamma^2B^2}{C} \nk\Big)\,.
    \end{align*}
    Summing over $k<K$ and using \Cref{lem:consensus_control}, we obtain:
    \begin{align*}
        \frac{2\gamma}{n} \sum_{k<K}\sum_{v\in\cI_k} (\E f( x_{v}^k) -f( x^\star)) &\leq \esp{ \NRM{\hat  x^0- x^\star}^2} + \frac{\gamma^2\sigma^2}{n^2}\NK + \frac{2\gamma L}{n}\big( 2C+\gamma \big) \sum_{k<K}\sum_{v\in\cI_k} (\E f( x_{v}^k) -f( x^\star)) \\
        & \quad + \sum_{k<K}\frac{\gamma}{Cn}\esp{\NRM{ \xx^k-\bar \xx^k}^2}+\frac{\gamma^3B^2}{Cn}\NK\\
        &\leq \esp{ \NRM{\hat  x^0- x^\star}^2} + \frac{\gamma^2\sigma^2}{n^2}\big(1+\frac{2\gamma \bar\rho^{-1}n}{C}\big)\NK+ \sum_{k<K}\frac{\gamma}{Cn}\esp{\NRM{ \xx^k-\bar \xx^k}^2}+\frac{\gamma^3B^2}{Cn}\NK\\
        &\quad + \frac{2\gamma L}{n}\big( 2C+\gamma + \frac{4\gamma^2}{\bar\rho^{2}} \big) \sum_{k<K}\sum_{v\in\cI_k} (\E f( x_{v}^k) -f( x^\star)) \,.
    \end{align*}
    Hence, provided that $ 2C+\gamma + \frac{4\gamma^2}{\bar\rho^{2}} \leq \frac{1}{2L}$, which is verified for $C=\frac{1}{8L}$ and $\gamma\leq \frac{1}{4L}\times \frac{1}{1+2\bar\rho^{-1}}$, we have:
    \begin{align*}
        \frac{\gamma}{n} \sum_{k<K}\sum_{v\in\cI_k} (\E f( x_{v}^k) -f( x^\star)) \leq \esp{ \NRM{\hat  x^0- x^\star}^2} + \frac{\gamma^2\sigma^2}{n^2}\big(1+16L\gamma \bar\rho^{-1}n\big)\NK + \frac{8L\gamma^3B^2}{n}\NK\,,
    \end{align*}
    leading to, for $\eta=\gamma/n$:
    \begin{align*}
        \esp{f\left(\frac{1}{\NK}\sum_{k=0}^{K-1}\sum_{v\in\cI_k} x_{v}^k\right)-f( x^\star)} \leq \frac{\esp{ \NRM{\hat  x^0- x^\star}^2}}{\eta\NK} + \eta \sigma^2 + \eta^2\left(16L\sigma^2n^2\brhom + 8LB^2n^2\right) \,.
    \end{align*}
    Optimizing over $\eta\leq \frac{1}{4L}\times \frac{1}{n(1+2\bar\rho^{-1})}$, we thus obtain that:
    \begin{equation*}
        \esp{f\left(\frac{1}{\NK}\sum_{k=0}^{K-1}\sum_{v\in\cI_k} x_{v}^k\right)-f( x^\star)} = \cO \left( \frac{LD^2 n\brhom}{\NK} + \sqrt{\frac{D\sigma^2}{\NK}} + \left[  \frac{D^2\sqrt{LB^2n^2 + L\sigma^2n^2\brhom }}{\NK} \right]^{2/3} \right)\,.
    \end{equation*}
    
\end{proof}

\subsection{Smooth-Lipschitz-convex rates with sampling, heterogeneous case} \label{sec:het-lipshitz-convex}

\begin{proof}
    We have:
    \begin{equation*}
        \esp{\NRM{\hat  x^{k+1}- x^\star}^2} \leq \esp{ \NRM{\hat  x^k- x^\star}^2 - \frac{2\gamma}{n}\sum_{v\in\cI_k}\langle \nabla f_v(x_v^k),\hat x^k-x^\star\rangle }  + \frac{\gamma^2\sigma^2\nk + \gamma^2\E\NRM{\sum_{v\in\cI_k}\nabla f_v(x_{v}^{k})}^2}{n^2}\,,
    \end{equation*}
    and we will handle the middle term differently than before.
    Using $- \frac{2\gamma}{n}\sum_{v\in\cI_k}\langle \nabla f_v(x_v^k),\hat x^k-x^\star\rangle = - \frac{2\gamma}{n}\sum_{v\in\cI_k}\langle \nabla f_v(x_v^k),  x_v^k-x^\star\rangle - \frac{2\gamma}{n}\sum_{v\in\cI_k}\langle \nabla f_v(x_v^k),\hat x^k-x_v^k \rangle $ and then convexity for the first term and smoothness for the second, we obtain:
    \begin{align*}
        - \frac{2\gamma}{n}\sum_{v\in\cI_k}\langle \nabla f_v(x_v^k),\hat x^k-x^\star\rangle &\leq - \frac{2\gamma}{n}\sum_{v\in\cI_k}\left(f_v(x_v^k)-f_v(x^\star) - \frac{2\gamma}{n}\sum_{v\in\cI_k}f_v(\hat x^k) -f_v(x_v^k) -\frac{L}{2}\NRM{x_v^k - \hat x^k}^2\right)\\
        &=- \frac{2\gamma}{n}\sum_{v\in\cI_k} f_v(\hat x^k) -f_v(x^\star)   + \frac{\gamma L}{n}\sum_{v\in\cI_k} \NRM{x_v^k - \hat x^k}^2\,.
    \end{align*} 
    Taking the expectation wrt $\cI_k$:
    \begin{align*}
        \esp{- \frac{2\gamma}{n}\sum_{v\in\cI_k}\langle \nabla f_v(x_v^k),\hat x^k-x^\star\rangle}&\leq - \frac{2\gamma}{n}\sum_{v\in\cV} p_v\big( f_v(\hat x^k) -f_v(x^\star)\big)   + \frac{\gamma L}{n}\sum_{v\in\cV} p_v \NRM{x_v^k - \hat x^k}^2\\
        &\leq- \frac{2\gamma n\bar p }{n}\big(f(\hat x^k) -f(x^\star) \big)  + \frac{2\gamma Lp_{\max}}{n} \NRM{\xx^k - \bar \xx^k}^2+ \frac{\gamma L}{n}\sum_{v\in\cV} p_v \NRM{\hat x^k - \bar x^k}^2\\
        &\leq - \frac{2\gamma n\bar p }{n}\big(f(\hat x^k) -f(x^\star) \big)  + \frac{2\gamma Lp_{\max}}{n} \NRM{\xx^k - \bar \xx^k}^2 + 2\gamma L \bar p \NRM{\hat x^k - \bar x^k}^2\,.
    \end{align*}
    Then, for the variance term, we need to bound $\E\NRM{\sum_{v\in\cI_k}\nabla f_v(x_{v}^{k})}^2$.
    For any $(z_v)_{v\in\cV}$, we have $\esp{\NRM{\sum_{v\in\cI_k}z_v}^2}=\esp{\sum_{v,v'\in\cV}\one_{v\in\cV}\one_{v'\in\cV}\langle z_v,z_{v'}\rangle}=\sum_{v\ne v'\in\cV}\one_{v\in\cV}p_vp_{v'}\langle z_v,z_{v'}\rangle+\sum_{v\in\cV}p_v\NRM{z_v}^2\leq \sum_{v\in\cV}p_v\NRM{z_v}^2+\NRM{\sum_{v\in\cV}p_vz_v}^2$. And finally, using convexity of the squared norm, $\NRM{\sum_{v\in\cV}p_vz_v}^2\leq n\bar p\sum_{v\in\cV}p_v\NRM{z_v}^2$.
    Hence, we have \[\E_{\cI_k}\NRM{\sum_{v\in\cI_k}\nabla f_v(x_{v}^{k})}^2\leq  \sum_{v\in\cV} p_v \NRM{\nabla f_v(x_{v}^{k})}^2 +   \NRM{\sum_{v\in\cV} p_v\nabla f_v(x_{v}^{k})}^2\,.\]
    Thus, plugging this in the first inequality,
    \begin{align*}
        \frac{2\gamma n\bar p }{n}\big(\E f(\hat x^k) -f(x^\star) \big) &\leq \esp{\NRM{\hat  \xx^k- \xx^\star}^2-\NRM{\hat  \xx^{k+1}- \xx^\star}^2} + \esp{\frac{\gamma^2\sigma^2\nk + \gamma^2\E\NRM{\sum_{v\in\cI_k}\nabla f_v(x_{v}^{k})}^2}{n^2}}\\
        &\quad + \esp{ \frac{2\gamma Lp_{\max}}{n} \NRM{\xx^k - \bar \xx^k}^2 + 2\gamma L \bar p \NRM{\hat x^k - \bar x^k}^2}\\
        &= \esp{\NRM{\hat  \xx^k- \xx^\star}^2-\NRM{\hat  \xx^{k+1}- \xx^\star}^2} + \frac{\gamma^2\sigma^2 n\bar p + \gamma^2\sum_{v\in\cV}p_v\E\NRM{\nabla f_v(x_{v}^{k})}^2+ \gamma^2\NRM{\sum_{v\in\cV} p_v\nabla f_v(x_{v}^{k})}^2}{n^2}\\
        &\quad + \esp{ \frac{2\gamma Lp_{\max}}{n} \NRM{\xx^k - \bar \xx^k}^2 + 2\gamma L \bar p \NRM{\hat x^k - \bar x^k}^2}\,.
    \end{align*}
    Then, using smoothness, we have that $f(\bar x^k)-f(x^\star)\leq f(\hat x^k)-f(x^\star) + \langle \nabla f(\hat x^k), \hat x^k-\bar x^k\rangle + \frac{L}{2}\NRM{\bar x^k-\hat x^k}\leq 2(f(\hat x^k)-f(x^\star)) +2L\NRM{\bar x^k -\hat x^k}^2$, leading to:
    \begin{align*}
        \frac{2\gamma n\bar p }{n}\big(\E f(\bar x^k) -f(x^\star) \big) &\leq \frac{4\gamma n\bar p }{n}\big(\E f(\hat x^k) -f(x^\star) \big) + \frac{4L\gamma n\bar p }{n}\NRM{\bar x^k -\hat x^k}^2\\
        &\leq 2\esp{\NRM{\hat  \xx^k- \xx^\star}^2-\NRM{\hat  \xx^{k+1}- \xx^\star}^2} + \frac{2\gamma^2\sigma^2 n\bar p + 2\gamma^2\left(\sum_{v\in\cV}p_v\E\NRM{\nabla f_v(x_{v}^{k})}^2+\NRM{\sum_{v\in\cV} p_v\nabla f_v(x_{v}^{k})}^2\right)}{n^2}\\
        &\quad + \esp{ \frac{4\gamma Lp_{\max}}{n} \NRM{\xx^k - \bar \xx^k}^2 + 8\gamma L \bar p  \NRM{\hat x^k - \bar x^k}^2}\,.
    \end{align*}
    We have $\NRM{\hat x^k - \bar x^k}^2\leq \gamma^2B^2$.
    Now, 
    \begin{align*}
        \sum_{v\in\cV}p_v\NRM{\nabla f_v(x_{v}^{k})}^2 & \leq 2\sum_{v\in\cV}p_v\NRM{\nabla f_v(x_{v}^{k})-\nabla f_v(\bar x^{k})}^2+p_v\NRM{\nabla f_v(\bar x^{k})}^2\\
        &\leq 2L^2p_{\max}\NRM{\xx^k-\bar\xx^k}^2+2\sum_{v\in\cV}p_v\NRM{\nabla f_v(\bar x^{k})}^2\\
        &\leq 2L^2p_{max}\NRM{\xx^k-\bar\xx^k}^2+2n\bar p\NRM{\nabla f(\bar x^{k})}^2 + 2n\bar p\zeta^2\,.
    \end{align*}
    Then,
    \begin{align*}
        \NRM{\sum_{v\in\cV}p_v\nabla f_v(x_{v}^{k})}^2 & \leq 2\NRM{\sum_{v\in\cV}p_v\nabla f_v(\bar x^k)}^2 + 2\NRM{\sum_{v\in\cV}p_v(\nabla f_v(x_{v}^{k})-\nabla f_v(\bar x^{k}))}^2\\
        &\leq 2(n\bar p)^2\NRM{\nabla f(\bar x^k)}^2 + 2(n\bar p)\sum_{v\in\cV}p_v\NRM{(\nabla f_v(x_{v}^{k})-\nabla f_v(\bar x^{k}))}^2\\
        &\leq 2(n\bar p)^2\NRM{\nabla f(\bar x^k)}^2 + 2(n\bar p)\sum_{v\in\cV}p_v L^2\NRM{x_{v}^{k}-\bar x^{k}}^2\\
        &\leq 2(n\bar p)^2\NRM{\nabla f(\bar x^k)}^2 + 2(n\bar p)p_{\max}L^2\NRM{\xx^{k}-\bar \xx^{k}}^2\\
    \end{align*}
    Thus, this leads to:
    \begin{align*}
        \frac{2\gamma n\bar p }{n}\big(\E f(\bar x^k) -f(x^\star) \big) &\leq 2\esp{\NRM{\hat  \xx^k- \xx^\star}^2-\NRM{\hat  \xx^{k+1}- \xx^\star}^2} + \frac{2\gamma^2(\sigma^2+2\zeta^2) n\bar p + 8\gamma^2n^2\bar p^2 \NRM{\nabla f(\bar x^k)}^2}{n^2}\\
        &\quad + \esp{ \big(\frac{4\gamma Lp_{\max}}{n} + \frac{2\gamma^2L^2p_{\max}(1+n\bar p) }{n} \big) \NRM{\xx^k - \bar \xx^k}^2 + 8\gamma L \bar p  \NRM{\hat x^k - \bar x^k}^2}\,.
    \end{align*}
    We now use the following lemma.
    \begin{lemma}\label{lem:consensus_advanced}
        For stepsizes $\gamma\leq \frac{\bar\rho}{4L\sqrt{p_{\max}}} $, we have:
        \begin{equation*}
            \sum_{k<K}\esp{\NRM{\xx^k-\bar\xx^k}^2}\leq 4\gamma^2\sigma^2\brhom n\bar p K + 8\gamma^2\bar\rho^{-2}\sum_{v\in\cV}\NRM{\nabla f_v(\bar x^0)}^2 + 16\gamma^2\bar\rho^{-2} n\bar p\sum_{k<K}\big(\NRM{\nabla f(\bar x^k)}^2 + \zeta^2\big)\,.
        \end{equation*}
    \end{lemma}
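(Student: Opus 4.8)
The plan is to sharpen the generic consensus bound of \Cref{lem:consensus_control} by specializing it to the heterogeneous sampling setting and then closing the resulting self-referential inequality using the stepsize restriction. I would start from the first inequality of \Cref{lem:consensus_control}, which after taking full expectation and using $\esp{\NK}=n\bar p K$ under \Cref{hyp:hetero} reads
\[
\sum_{k<K}\esp{\NRM{\xx^k-\bar\xx^k}^2}\leq 2\gamma^2\sigma^2\brhom\, n\bar p K + \frac{4\gamma^2}{\bar\rho^2}\sum_{k<K}\sum_{v\in\cI_k}\esp{\NRM{\nabla f_v(x_v^{k-\tau(k,v)})}^2}\,.
\]
The delayed gradients are handled by a reindexing: for each fixed node $v$, as $k$ runs over the iterations with $v\in\cI_k$, the shifted indices $k-\tau(k,v)$ are pairwise distinct and, with the single exception of the value $0$ (reached at most once, at $v$'s first activation, where $x_v^0=\bar x^0$ by initialization), they are themselves activation indices of $v$ lying in $[0,K)$. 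Summing over $v$ therefore gives
\[
\sum_{k<K}\sum_{v\in\cI_k}\esp{\NRM{\nabla f_v(x_v^{k-\tau(k,v)})}^2}\leq \sum_{v\in\cV}\NRM{\nabla f_v(\bar x^0)}^2 + \sum_{k<K}\sum_{v\in\cI_k}\esp{\NRM{\nabla f_v(x_v^k)}^2}\,,
\]
which is where the initialization term of the statement comes from.

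Next I would take the conditional expectation over the sampling at step $k$: since $\one_{v\in\cI_k}$ is independent of $\cF_k$ while $x_v^k$ is $\cF_k$-measurable, $\esp{\sum_{v\in\cI_k}\NRM{\nabla f_v(x_v^k)}^2}=\sum_{v\in\cV}p_v\,\esp{\NRM{\nabla f_v(x_v^k)}^2}$. Then, exactly as in the proof of \Cref{thm:lip-smooth-conv} above, I would combine $L$-smoothness of the $f_v$, the proportionality $\pp=n\bar p\,\qq$, and the population-variance bound \eqref{eq:pop_variance} to obtain, for each $k$,
\[
\sum_{v\in\cV}p_v\NRM{\nabla f_v(x_v^k)}^2\leq 2L^2 p_{\max}\,\NRM{\xx^k-\bar\xx^k}^2 + 2n\bar p\,\esp{\NRM{\nabla f(\bar x^k)}^2}_{\mid\cF_k} + 2n\bar p\,\zeta^2\,.
\]

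Plugging this in, the quantity $\NRM{\xx^k-\bar\xx^k}^2$ reappears on the right-hand side, now with coefficient $8\gamma^2 L^2 p_{\max}/\bar\rho^2$. The decisive step is to absorb this term into the left-hand side: the hypothesis $\gamma\leq \bar\rho/(4L\sqrt{p_{\max}})$ is exactly what forces this coefficient to be at most $\tfrac12$. (This rearrangement is legitimate because $\sum_{k<K}\esp{\NRM{\xx^k-\bar\xx^k}^2}$ is a finite sum of finite terms, as one checks by a routine induction bounding the second moments of the iterates.) After moving the consensus term across and multiplying through by $2$, the three surviving contributions are precisely $4\gamma^2\sigma^2\brhom\, n\bar p K$, $8\gamma^2\bar\rho^{-2}\sum_{v\in\cV}\NRM{\nabla f_v(\bar x^0)}^2$, and $16\gamma^2\bar\rho^{-2}\,n\bar p\sum_{k<K}\big(\esp{\NRM{\nabla f(\bar x^k)}^2}+\zeta^2\big)$, which is the claimed inequality.

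I expect the only genuinely delicate point to be the bookkeeping in the reindexing step --- matching the $\prev$/$\tau$ conventions so that no gradient term is counted twice and the sole leftover is the initialization term $\NRM{\nabla f_v(\bar x^0)}^2$; the remainder is the by-now standard ``smoothness converts local gradients into consensus distance, and a small stepsize absorbs it'' manipulation already used repeatedly in the appendix.
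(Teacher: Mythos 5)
Your proposal is correct and follows essentially the same route as the paper's proof: start from \Cref{lem:consensus_control}, reindex the delayed gradients to extract the single initialization term $\sum_{v}\NRM{\nabla f_v(\bar x^0)}^2$, replace $\sum_{v\in\cI_k}$ by $\sum_v p_v$ via the sampling assumption, apply the smoothness/population-variance decomposition to get $2L^2p_{\max}\NRM{\xx^k-\bar\xx^k}^2+2n\bar p\NRM{\nabla f(\bar x^k)}^2+2n\bar p\zeta^2$, and absorb the resulting self-referential consensus term using $\gamma\leq\bar\rho/(4L\sqrt{p_{\max}})$ so that its coefficient is at most $\tfrac12$. The constants and the final multiplication by $2$ match the paper's argument exactly.
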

\begin{proof}[Proof of the lemma]
    Denoting $C_K=\sum_{k<K}\esp{\NRM{\xx^k-\bar\xx^k}^2}$ and using \Cref{lem:consensus_control}, we have
    \begin{align*}
        C_k&\leq 2\gamma^2\sigma^2\brhom \NK + \frac{4\gamma^2}{\bar\rho^2}\sum_{k<K}\esp{\sum_{v\in\cI_k}\NRM{\nabla f_v(x_v^{k-\tau(k,v)})}^2}\\
        &\leq 2\gamma^2\sigma^2\brhom n\bar p K + 8\gamma^2\bar\rho^{-2}\sum_{v\in\cV}\NRM{\nabla f_v(\bar x^0)}^2 + \frac{4\gamma^2}{\bar\rho^2}\sum_{k<K}\sum_{v\in\cV}p_v\esp{\NRM{\nabla f_v(x_v^{k})}^2}\,,
    \end{align*}
    using $\sum_{k<K}\sum_{v\in\cI_k}\NRM{\nabla f_v(x_v^{k-\tau(k,v)})}^2\leq\sum_{k<K}\sum_{v\in\cI_k}\NRM{\nabla f_v(x_v^{k})}^2 + \sum_{v\in\cV}\NRM{\nabla f_v(x_v^0)}^2$.
    Then, $\sum_{v\in\cV}p_v\esp{\NRM{\nabla f_v(x_v^{k})}^2}\leq 2\sum_{v\in\cV}p_v\esp{\NRM{\nabla f_v(\bar x^{k})}^2} +2 \sum_{v\in\cV}p_v\esp{\NRM{\nabla f_v(\bar x^{k})-\nabla f_v(x_v^{k})}^2}\leq 2n\bar p \zeta^2+ 2n\bar p \E\NRM{\nabla f(\bar x^k)}^2 + 2L^2p_{\max}\E\NRM{\xx^k-\bar\xx^k}^2$, which leads to:
    \begin{align*}
        C_K&\leq 2\gamma^2\sigma^2\brhom n\bar p K + 4\gamma^2\bar\rho^{-2}\sum_{v\in\cV}\NRM{\nabla f_v(\bar x^0)}^2 + 8\gamma^2\bar\rho^{-2} n\bar p\sum_{k<K}\big(\NRM{\nabla f(\bar x^k)}^2 + \zeta^2\big)\\
        &\quad + 8\gamma^2L^2p_{\max}\bar\rho^{-2} C_K\,,
    \end{align*}
    leading to the desired result for $\gamma\leq \frac{\bar\rho}{4L\sqrt{p_{\max}}}$.
\end{proof}

Using \Cref{lem:virtual_iterates} and \Cref{lem:consensus_advanced}, we thus have:
\begin{align*}
    \frac{2\gamma n\bar p }{n}\sum_{k<K}\big(\E f(\bar x^k) -f(x^\star) \big) &\leq 2\esp{\NRM{\hat  \xx^0- \xx^\star}^2} + \frac{2\gamma^2(\sigma^2+2\zeta^2) \bar p K}{n} + 4\gamma^2\bar p \sum_{k<K} \esp{\NRM{\nabla f(\bar x^k)}^2}\\
    &\quad + \esp{ \big(\frac{4\gamma Lp_{\max}}{n} + \frac{2\gamma^2L^2p_{\max} }{n} \big) \sum_{k<K}\NRM{\xx^k - \bar \xx^k}^2} + 8\gamma^3 LB^2 \bar p K\\
    &\leq 2\esp{\NRM{\hat  \xx^0- \xx^\star}^2} + \frac{2\gamma^2(\sigma^2+2\zeta^2) \bar p K}{n} + 4\gamma^2\bar p \sum_{k<K} \esp{\NRM{\nabla f(\bar x^k)}^2} + + 8\gamma^3 LB^2 \bar p K\\
    &\quad + \frac{6\gamma Lp_{\max}}{n} \left[ 4\gamma^2\sigma^2\brhom n\bar p K + 8\gamma^2\bar\rho^{-2}\sum_{v\in\cV}\NRM{\nabla f_v(\bar x^0)}^2 + 16\gamma^2\bar\rho^{-2} n\bar p\sum_{k<K}\big(\NRM{\nabla f(\bar x^k)}^2 + \zeta^2\big) \right]\\
    &= 2\esp{\NRM{\hat  \xx^0- \xx^\star}^2} + \big(8\gamma^2L\bar p + 96 \gamma^3L^2p_{\max}\bar p \bar\rho^{-2}\big) \sum_{k<K} \esp{ f(\bar x^k)-f(x^\star)} + \frac{2\gamma^2(\sigma^2+2\zeta^2) \bar p K}{n}\\
    &\quad + \gamma^3 K\left( 8LB^2\bar p + 24L\sigma^2p_{\max}\bar p\brhom + 96 L\zeta^2p_{\max}\bar p\bar\rho^{-2}  \right) + \frac{48\gamma^2 Lp_{\max}\bar\rho^{-2}}{n}\sum_{v\in\cV}\NRM{\nabla f_v(\bar x^0)}^2\,.
\end{align*}
Hence, for stepsizes satisfying $8\gamma L\bar p + 96 \gamma^2L^2p_{\max}\bar p \bar\rho^{-2}\leq \bar p$, which is verified for $\gamma\leq \min\left(\frac{1}{16L}, \frac{\bar \rho}{14L\sqrt{p_{\max}}}  \right)$, we obtain:
\begin{align*}
    \sum_{k<K}\big(\E f(\bar x^k) -f(x^\star) \big) &\leq \frac{2\esp{\NRM{\hat  \xx^0- \xx^\star}^2}}{\gamma \bar p} + \frac{2\gamma(\sigma^2+2\zeta^2) K}{n}+ \gamma^2 K\left( 8LB^2 + 24L\sigma^2p_{\max}\brhom + 96 L\zeta^2p_{\max}\bar\rho^{-2}  \right)\\
    &\quad  + \frac{48\gamma Lp_{\max}\bar\rho^{-2}}{n\bar p}\sum_{v\in\cV}\NRM{\nabla f_v(\bar x^0)}^2\,.
\end{align*}
Optimizing over $\gamma\leq \min\left(\frac{1}{16L}, \frac{\bar \rho}{14L\sqrt{p_{\max}}}, \frac{\bar\rho}{L}  \right)$, this leads to:
\begin{align*}
    \frac{1}{K}\sum_{k<K}\big(\E f(\bar x^k) -f(x^\star) \big) = &\cO\left( \frac{LD^2\left(\frac{1}{\bar p} + \sqrt{\frac{p_{\max}}{\bar p^2}} \brhom  \right)}{K}  + \sqrt{\frac{D^2(\sigma^2+\zeta^2)}{n\bar p K}}  + \left[ \frac{D^2\sqrt{LB^2 + L\sigma^2p_{\max}\bar\rho^{-1} + L\zeta p_{\max}\bar\rho^{-2}}}{\bar p K}  \right]^{\frac{2}{3}}   \right. \\
    &\quad \left. + \frac{\bar\rho^{-1}\frac{p_{\max}}{\bar p}  }{ K } \frac{1}{n} \sum_{v\in\cV} \NRM{\nabla f_v(\bar x^0)}^2 \right) \,.
\end{align*}

\end{proof}

\section{Proof of \Cref{thm:smooth-conv}: smooth-convex case}

\subsection{Homogeneous without sampling}

\begin{proof}
    As before, we have:
    \begin{align*}
        \esp{\NRM{\hat  x^{k+1}- x^\star}^2} \leq \esp{ \NRM{\hat  x^k- x^\star}^2+ T_1+T_2^k+T_3 }  + \frac{\gamma^2\sigma^2\nk + \gamma^2\E\NRM{\sum_{v\in\cI_k}\nabla f_v(x_{v}^{k})}^2}{n^2}\,,
    \end{align*}
    with
    \begin{align*}
        T_1&= - \frac{2\gamma}{n} \sum_{v\in\cI_k} \langle \nabla f_v( x_{v}^k), x_{v}^k- x^\star\rangle \\
        T_2^k&= \frac{2\gamma}{n}\sum_{v\in\cI_k} \langle \nabla f_v( x_{v}^k), x_{v}^k-\bar x^k\rangle \\
        T_3&= \frac{2\gamma}{n}\sum_{v\in\cI_k} \langle \nabla f_v( x_{v}^k),\bar x^k-\hat x^k\rangle \,,
    \end{align*}
    We will bound $T_1,T_2$ as in the proof with the Lipschitz assumption.
    For the term $T_3$, using convexity and \Cref{lem:virtual_iterates}:
    \begin{align*}
        \E T_3 &\leq \frac{\gamma}{n}\Big( C\sum_{v\in\cI_k}\NRM{\nabla f( x_{v}^k)}^2 +\frac{1}{C}\E\NRM{\bar x^k-\hat x^k}^2 \Big)\\
        &\leq \frac{\gamma}{n}\Big( 2LC\sum_{v\in\cI_k}(f( x_{v}^k)-f( x^\star)) +\frac{2\gamma^2}{Cn}\nk(\sigma^2+\sum_{v\in\cV} \NRM{\nabla f(x_v^{k-\tau(v,k)})}^2 )  \Big)\\
        &\leq \frac{2\gamma^2}{Cn^2}\nk\sigma^2 + \frac{\gamma}{n}\Big( 2LC\sum_{v\in\cI_k}(f( x_{v}^k)-f( x^\star)) +\frac{2\gamma^2}{Cn}\nk\sum_{v\in\cV} \NRM{\nabla f(x_v^{k-\tau(v,k)})}^2  \Big)\,.
    \end{align*}
    for $\gamma\leq 1/(nL)$.
    Then, 
    \begin{align*}
        \sum_{k<K}\nk\sum_{v\in\cV} \NRM{\nabla f(x_v^{k-\tau(v,k)})}^2 & \leq \sum_{v\in\cV} \sum_{k<K:v\in\cI_k} \NRM{\nabla f(x_v^{k})}^2\sum_{\ell=k}^{\nex(v,k+1)-1} \nl\\
        &\leq \tau_{\max}\sum_{v\in\cV} \sum_{k<K:v\in\cI_k} \NRM{\nabla f(x_v^{k})}^2\\
        &\leq 2L\tau_{\max}\sum_{v\in\cV} \sum_{k<K:v\in\cI_k} f(x_v^{k})-f(x^\star)\,,
    \end{align*}
    where $\tau_{\max}$ is an upper bound on the maximal compute delay defined as  $\tau_{\max}\geq \sup_{k<K}\sum_{\ell=k}^{\nex(v,k+1)-1} \nl$.

    Thus,
    \begin{align*}
        \frac{2\gamma}{n} \sum_{v\in\cI_k} (\E f( x_{v}^k) -f( x^\star)) &\leq -\esp{\NRM{\hat  x^{k+1}- x^\star}^2} +\esp{ \NRM{\hat  x^k- x^\star}^2} + \frac{\gamma^2\sigma^2\nk}{n^2} + \frac{2\gamma^2 L\nk}{n^2} \sum_{v\in\cI_k} (\E f( x_{v}^k) -f( x^\star)) \\
        & \quad + \frac{2LC\gamma}{n}\sum_{v\in\cI_k}\esp{(f( x_{v}^k)-f( x^\star))} + \frac{\gamma}{Cn}\esp{\NRM{ \xx^k-\bar \xx^k}^2}\\
        &\quad + \frac{2\gamma^2L}{Cn^2}\nk\sigma^2 + \frac{\gamma}{n}\Big( 2LC\sum_{v\in\cI_k}(f( x_{v}^k)-f( x^\star)) +\frac{2\gamma^2}{Cn}\nk\sum_{v\in\cV} \NRM{\nabla f(x_v^{k-\tau(v,k)})}^2  \Big)\,.
    \end{align*}
    Summing over $k<K$, using \Cref{lem:consensus_control} and our bound on $T_3$, we obtain:
    \begin{align*}
        \frac{2\gamma}{n} \sum_{k<K}\sum_{v\in\cI_k} (\E f( x_{v}^k) -f( x^\star)) &\leq \esp{ \NRM{\hat  x^0- x^\star}^2} + \frac{3\gamma^2\sigma^2}{n^2}\NK + \frac{2\gamma L}{n}\big( 2C+\gamma + \frac{2\tau_{\max}\gamma^2}{Cn} \big) \sum_{k<K}\sum_{v\in\cI_k} (\E f( x_{v}^k) -f( x^\star)) \\
        & \quad + \sum_{k<K}\frac{\gamma}{Cn}\esp{\NRM{ \xx^k-\bar \xx^k}^2}\\
        &\leq \esp{ \NRM{\hat  x^0- x^\star}^2} + \frac{\gamma^2\sigma^2}{n^2}\big(1+\frac{2\gamma \bar\rho^{-1}n}{C}\big)\NK+ \sum_{k<K}\frac{\gamma}{Cn}\esp{\NRM{ \xx^k-\bar \xx^k}^2}+\frac{\gamma^3B^2}{Cn}\NK\\
        &\quad + \frac{2\gamma L}{n}\big( 2C+\gamma + \frac{2\tau_{\max}\gamma^2}{Cn}  +  \frac{4\gamma^2}{\bar\rho^{2}} \big) \sum_{k<K}\sum_{v\in\cI_k} (\E f( x_{v}^k) -f( x^\star)) \,,
    \end{align*}
    using \Cref{lem:consensus_advanced} to handle the sum of the terms $\norm{\xx^k - \bar \xx^k}^2$.
    
    Hence, provided that $ 2C+\gamma + \frac{2\tau_{\max}\gamma^2}{Cn}  + \frac{4\gamma^2}{\bar\rho^{2}} \leq \frac{1}{2L}$, which is verified for $C=\frac{1}{8L}$ and $\gamma\leq \frac{1}{4L}\times \frac{1}{1+2\bar\rho^{-1}+4\sqrt{\tau_{\max}/n}}$, we have:
    \begin{align*}
        \frac{\gamma}{n} \sum_{k<K}\sum_{v\in\cI_k} (\E f( x_{v}^k) -f( x^\star)) \leq \esp{ \NRM{\hat  x^0- x^\star}^2} + \frac{\gamma^2\sigma^2}{n^2}\big(3+16L\gamma \bar\rho^{-1}n\big)\NK \,,
    \end{align*}
    leading to, for $\eta=\gamma/n$:
    \begin{align*}
        \esp{f\left(\frac{1}{\NK}\sum_{k=0}^{K-1}\sum_{v\in\cI_k} x_{v}^k\right)-f( x^\star)} \leq \frac{\esp{ \NRM{\hat  x^0- x^\star}^2}}{\eta\NK} + 3\eta \sigma^2 + \eta^2 16L\sigma^2n^2\brhom \,.
    \end{align*}
    Optimizing over $\eta\leq \frac{1}{4L}\times \frac{1}{n(1+2\bar\rho^{-1}) +4 \sqrt{n\tau_{\max}}}$, we thus obtain that:
    \begin{equation*}
        \esp{f\left(\frac{1}{\NK}\sum_{k=0}^{K-1}\sum_{v\in\cI_k} x_{v}^k\right)-f( x^\star)} = \cO \left( \frac{LD^2 (n\brhom+\sqrt{n\tau_{\max})}}{\NK} + \sqrt{\frac{D\sigma^2}{\NK}} + \left[  \frac{D^2\sqrt{L\sigma^2n^2\brhom }}{\NK} \right]^{2/3} \right)\,.
    \end{equation*}
\end{proof}

\subsection{Heterogeneous setting under sampling}\label{sec:heter-smooth}

\begin{proof}
    As in the Lipschitz case, we have:
    \begin{align*}
        \frac{2\gamma n\bar p }{n}\sum_{k<K}\big(\E f(\bar x^k) -f(x^\star) \big) &\leq 2\esp{\NRM{\hat  \xx^0- \xx^\star}^2} + \frac{2\gamma^2(\sigma^2+2\zeta^2) \bar p K}{n} + 4\gamma^2\bar p \sum_{k<K} \esp{\NRM{\nabla f(\bar x^k)}^2}\\
        &\quad + \esp{ \frac{6\gamma Lp_{\max}}{n}  \sum_{k<K}\NRM{\xx^k - \bar \xx^k}^2} + 8\gamma L \bar p \esp{\sum_{k<K} \NRM{\bar x^k-\hat x^k}^2}\,.
    \end{align*}
    Since losses are no longer assumed to be Lipschitz, we cannot bound this last term $\esp{\sum_{k<K} \NRM{\bar x^k-\hat x^k}^2}$ by $\gamma^2 B^2$. However, using \Cref{lem:virtual_iterates},
    \begin{align*}
        \esp{\sum_{k<K} \NRM{\bar x^k-\hat x^k}^2} &\leq \frac{2\gamma^2\sigma^2 K}{n}  + \frac{2\gamma^2}{n}\esp{\sum_{v\in\cV}\sum_{k<K} \NRM{\nabla f_v(x_v^{\prev(v,k)})}^2}\,.
    \end{align*}
    Then, 
    \begin{align*}
        \esp{\sum_{v\in\cV}\sum_{k<K} \NRM{\nabla f_v(x_v^{\prev(v,k)})}^2}&=\sum_{v\in\cV}\sum_{k<K} \esp{\NRM{\nabla f_v(x_v^k)}^2 \one_{v\in\cI_k} (\nex(k,v)-k)}\\
        &=\sum_{v\in\cV}\sum_{k<K} \esp{\NRM{\nabla f_v(x_v^k)}^2 \times \frac{1}{p_v}\times p_v}\\
        &=\sum_{v\in\cV}\sum_{k<K} \esp{\NRM{\nabla f_v(x_v^k)}^2}\\
        &\leq \frac{1}{p_{\min}}\sum_{v\in\cV}\sum_{k<K} p_v\esp{\NRM{\nabla f_v(x_v^k)}^2}\,.
    \end{align*}
    since the random variables $\NRM{\nabla f_v(x_v^k)}^2$, $\one_{v\in\cI_k}$ and $\nex(k,v)-k$ are independent, $\esp{\one_{v\in\cI_k}}=p_v$ (Bernoulli random variable) and $\esp{\nex(k,v)-k}=\frac{1}{p_v}$ (geometric random variable).
    And then, as we proved before, $\sum_{v\in\cV}\sum_{k<K} p_v\esp{\NRM{\nabla f_v(x_v^k)}^2}\leq 2L^2p_{max}\NRM{\xx^k-\bar\xx^k}^2+2n\bar p\NRM{\nabla f(\bar x^{k})}^2 + 2n\bar p\zeta^2$.
    Consequently,
    \begin{align*}
        \frac{2\gamma n\bar p }{n}\sum_{k<K}\big(\E f(\bar x^k) -f(x^\star) \big) &\leq 2\esp{\NRM{\hat  \xx^0- \xx^\star}^2} + \frac{2\gamma^2(\sigma^2+2\zeta^2) \bar p K}{n} + (4\gamma^2\bar p + 32\gamma^3L\bar p \frac{p_{\max}}{p_{\min}})\sum_{k<K} \esp{\NRM{\nabla f(\bar x^k)}^2}\\
        &\quad + \esp{ \big(\frac{6\gamma Lp_{\max}}{n} + \frac{32\gamma^3L^3\bar pp_{\max}}{np_{\min}}\big) \sum_{k<K}\NRM{\xx^k - \bar \xx^k}^2} + \frac{16 \gamma^3\sigma^2 L \bar p K}{n} + \frac{32\gamma^3L\zeta^2\bar p^2}{p_{\min}}\\
        &\leq 2\esp{\NRM{\hat  \xx^0- \xx^\star}^2} + \frac{2\gamma^2(\sigma^2+2\zeta^2) \bar p K}{n} + (4\gamma^2\bar p + 32\gamma^3L\bar p \frac{p_{\max}}{p_{\min}})\sum_{k<K} \esp{\NRM{\nabla f(\bar x^k)}^2}\\
        &\quad + \esp{ \frac{12\gamma Lp_{\max}}{n}  \sum_{k<K}\NRM{\xx^k - \bar \xx^k}^2} + \frac{16 \gamma^3\sigma^2 L \bar p K}{n} + \frac{32\gamma^3L\zeta^2\bar p^2}{p_{\min}}\,.
    \end{align*}
    provided that $\gamma\leq \sqrt{\frac{6p_{\min}}{32L^2p_{\max}}}$.
    Plugging \Cref{lem:consensus_advanced} in here, we obtain:
    \begin{align*}
        \frac{2\gamma n\bar p }{n}\sum_{k<K}\big(\E f(\bar x^k) -f(x^\star) \big) &\leq 2\esp{\NRM{\hat  \xx^0- \xx^\star}^2} + \frac{2\gamma^2(\sigma^2+2\zeta^2) \bar p K}{n} + (4\gamma^2\bar p + 32\gamma^3L\bar p \frac{p_{\max}}{p_{\min}})\sum_{k<K} \esp{\NRM{\nabla f(\bar x^k)}^2}\\
        &\quad  + \frac{16 \gamma^3\sigma^2 L \bar p K}{n} + \frac{32\gamma^3L\zeta^2\bar p^2}{p_{\min}}\\
        &\quad +\frac{12\gamma Lp_{\max}}{n} \left[ 4\gamma^2\sigma^2\brhom n\bar p K + 8\gamma^2\bar\rho^{-2}\sum_{v\in\cV}\NRM{\nabla f_v(\bar x^0)}^2 + 16\gamma^2\bar\rho^{-2} n\bar p\sum_{k<K}\big(\NRM{\nabla f(\bar x^k)}^2 + \zeta^2\big) \right]\\
        &= 2\esp{\NRM{\hat  \xx^0- \xx^\star}^2} + \big(8\gamma^2L\bar p + 192 \gamma^3L^2p_{\max}\bar p \bar\rho^{-2}+ 64\gamma^3L^2\bar p \frac{p_{\max}}{p_{\min}}\big) \sum_{k<K} \esp{ f(\bar x^k)-f(x^\star)}\\
        &\quad + \frac{2\gamma^2(\sigma^2+2\zeta^2) \bar p K}{n}+\gamma^3 K\left( 8LB^2\bar p + 24L\sigma^2p_{\max}\bar p\brhom + 96 L\zeta^2p_{\max}\bar p\bar\rho^{-2}  \right)\\
        &\quad + \frac{96\gamma^3 Lp_{\max}\bar\rho^{-2}}{n}\sum_{v\in\cV}\NRM{\nabla f_v(\bar x^0)}^2\,.    
    \end{align*}
    For $8\gamma^2L\bar p + 192 \gamma^3L^2p_{\max}\bar p \bar\rho^{-2}+ 64\gamma^3L^2\bar p \frac{p_{\max}}{p_{\min}}\leq \gamma\bar p$ which is verified for $\gamma\leq \min\left(\frac{1}{24L},\frac{\bar\rho}{24L\sqrt{p_{\max}}} , \frac{1}{14L\sqrt{\frac{p_{\max}}{p_{\min}}}} \right)$, we have:
    \begin{align*}
        \gamma \bar p \sum_{k<K}\big(\E f(\bar x^k) -f(x^\star) \big) &\leq 2\esp{\NRM{\hat  \xx^0- \xx^\star}^2}+ \frac{96\gamma^3 Lp_{\max}\bar\rho^{-2}}{n}\sum_{v\in\cV}\NRM{\nabla f_v(\bar x^0)}^2\\
        &\quad + \frac{2\gamma^2(\sigma^2+2\zeta^2) \bar p K}{n}+\gamma^3 K\left( 8LB^2\bar p + 24L\sigma^2p_{\max}\bar p\brhom + 96 L\zeta^2p_{\max}\bar p\bar\rho^{-2}  \right)\,,
    \end{align*}
    and thus:
    \begin{align*}
        \frac{1}{K}\sum_{k<K}\big(\E f(\bar x^k) -f(x^\star) \big) &\leq \frac{2\esp{\NRM{\hat  \xx^0- \xx^\star}^2}}{\bar p \gamma K}+ \frac{96\gamma^2 Lp_{\max}\bar\rho^{-2}}{n\bar pK}\sum_{v\in\cV}\NRM{\nabla f_v(\bar x^0)}^2\\
        &\quad + \frac{2\gamma(\sigma^2+2\zeta^2)}{n}+\gamma^2 \left( 8LB^2 + 24L\sigma^2p_{\max}\brhom + 96 L\zeta^2p_{\max}\bar\rho^{-2}  \right)\,.
    \end{align*}
    Now, we use 
    \begin{align*}
        \sum_{v\in\cV}\NRM{\nabla f_v(\bar x^0)}^2 &\leq  \sum_{v\in\cV}\NRM{\nabla f(\bar x^0)}^2 +\zeta^2 \leq  \sum_{v\in\cV}2L (f(x_0)-f(x^\star)) +\zeta^2\,,
    \end{align*}
    so that 
    \begin{align*}
        \frac{96\gamma^2 Lp_{\max}\bar\rho^{-2}}{n\bar pK}\sum_{v\in\cV}\NRM{\nabla f_v(\bar x^0)}^2 &\leq \frac{192\gamma^2 L^2p_{\max}\bar\rho^{-2}}{\bar pK}(f(x_0)-f(x^\star)) + \frac{96\zeta^2\gamma^2 Lp_{\max}\bar\rho^{-2}}{\bar pK}\\
        & \leq \frac{192\gamma^2 L^2p_{\max}\bar\rho^{-2}}{\bar p}\frac{1}{K}\sum_{k<K}\big(\E f(\bar x^k) -f(x^\star) \big) + \frac{96\zeta^2\gamma^2 Lp_{\max}\bar\rho^{-2}}{\bar pK}\\
        &\leq \frac{1}{2}\frac{1}{K}\sum_{k<K}\big(\E f(\bar x^k) -f(x^\star) \big) + 96\zeta^2\gamma^2 Lp_{\max}\bar\rho^{-2}\,,
    \end{align*}
    for $K\geq \frac{1}{\bar p}$ and $\gamma\leq \frac{1\bar\rho}{384L}\sqrt{\frac{\bar p}{p_{\max}}}$.
    Thus, 
        \begin{align*}
        \frac{1}{2K}\sum_{k<K}\big(\E f(\bar x^k) -f(x^\star) \big) &\leq \frac{2\esp{\NRM{\hat  \xx^0- \xx^\star}^2}}{\bar p \gamma K}\\
        &\quad + \frac{2\gamma(\sigma^2+2\zeta^2)}{n}+\gamma^2 \left( 8LB^2 + 24L\sigma^2p_{\max}\brhom + 192 L\zeta^2p_{\max}\bar\rho^{-2}  \right)\,.
    \end{align*}
    Optimizing over admissible $\gamma$'s leads to:
    \begin{align*}
        \frac{1}{K}\sum_{k<K}\big(\E f(\bar x^k) -f(x^\star) \big) = &\cO\left( \frac{LD^2\left(\frac{1}{\bar p} \sqrt{\frac{p_{\max}}{p_{\min}}} + \sqrt{\frac{p_{\max}}{\bar p^2}} \brhom  \right)}{K}  + \sqrt{\frac{D^2(\sigma^2+\zeta^2)}{n\bar p K}}  + \left[ \frac{D^2\sqrt{LB^2 + L\sigma^2p_{\max}\bar\rho^{-1} + L\zeta p_{\max}\bar\rho^{-2}}}{\bar p K}  \right]^{\frac{2}{3}}   \right) \,.
    \end{align*}

\end{proof}

\section{Proof of \Cref{thm:smooth-nonconv}: smooth non-convex case}

\subsection{Homogeneous without sampling}
\begin{proof}
Using $L$-smoothness and a virtual sequence $\hat{x}$ defined in Section~\ref{sec:virtual_sequence}, we have 
\begin{align}\label{eq:descent}
    \E_{k + 1} f(\hat x^{k + 1})
    &\leq f(\hat x^k) - \underbrace{\frac{\gamma}{n} \sum_{v \in \cI_k} \left\langle \nabla f(\hat x^k), \nabla f(x_{v}^{k}) \right\rangle}_{:= T_1} +\frac{L \gamma^2}{2 n^2} \left(\sigma^2\nk + \E\NRM{\sum_{v\in\cI_k}\nabla f(x_{v}^{k})}^2\right)
\end{align}
We separately estimate the middle term as
\begin{align*}
    T_1 &= - \frac{\gamma}{n} \sum_{v \in \cI_k} \left\langle \nabla f(\hat x^k), \nabla f(x_{v}^{k}) \right\rangle = - \frac{\gamma}{n} \sum_{v \in \cI_k} \left\langle \nabla f(\bar x^k), \nabla f(x_{v}^{k}) \right\rangle + \frac{\gamma}{n} \sum_{v \in \cI_k} \left\langle \nabla f(\bar x^k) - \nabla f(\hat x^k), \nabla f(x_{v}^{k}) \right\rangle  \\
    &\leq \frac{\gamma}{n} \sum_{v \in \cI_k} \left(- \frac{1}{2} \norm{\nabla f(\bar x^k)}^2 - \frac{1}{2} \norm{\nabla f(x_v^k)}^2 + \frac{L^2}{2} \norm{x_v^k - \bar x^k}^2 \right) + \frac{\gamma}{n} \sum_{v \in \cI_k} \left( \frac{1}{4} \norm{\nabla f(x_v^k)}^2 + L^2 \norm{\bar x^k - \hat x^k}^2 \right) \\
    & \leq - \frac{\gamma}{4 n} \sum_{v \in \cI_k} \norm{\nabla f(x_v^k)}^2 - \frac{|\cI_k| \gamma}{2n}\norm{\nabla f(\bar x^k)}^2 + \frac{L^2 \gamma}{2 n} \sum_{v \in \cI_k}\norm{x_v^k - \bar x^k}^2 + \frac{\gamma L^2 |\cI_k|}{n} \norm{\bar x^k - \hat x^k}^2
\end{align*}
where we used that for any vectors $a, b \in \R^d$ it holds that $- \langle a, b\rangle = - \frac{1}{2}\norm{a}^2 - \frac{1}{2}\norm{b}^2 + \frac{1}{2}\norm{a - b}^2$ and also it holds that $2 \langle a, b\rangle \leq \gamma \norm{a}^2 + \gamma^{-1}\norm{b}^2$ for any $\gamma > 0$ and we chose $\gamma = 2$. 

We further use Lemma~\ref{lem:virtual_iterates} to estimate the last term
\begin{align*}
    T_1 &\leq - \frac{\gamma}{4 n} \sum_{v \in \cI_k} \norm{\nabla f(x_v^k)}^2 - \frac{|\cI_k| \gamma}{2n}\norm{\nabla f(\bar x^k)}^2 + \frac{L^2 \gamma}{2 n} \norm{\xx^k-\bar \xx^k}^2 + \frac{2L^2\gamma^3 \nk }{n^2}\left( \sigma^2 + \sum_{v\in\cV} \esp{\NRM{\nabla f_v(x_v^{\prev(v,k)})}^2}\right)
\end{align*}
Putting this estimate of $T_1$ back into \eqref{eq:descent} we get
\begin{align*}
    \E_{k + 1} f(\hat x^{k + 1})
    &\leq f(\hat x^k) +\frac{L \gamma^2\sigma^2\nk}{2 n^2} + \frac{L \gamma^2}{2 n} \sum_{v\in\cI_k}\E\NRM{\nabla f(x_{v}^{k})}^2 - \frac{\gamma}{4 n} \sum_{v \in \cI_k} \norm{\nabla f(x_v^k)}^2 - \frac{|\cI_k| \gamma}{2n}\norm{\nabla f(\bar x^k)}^2 \\
    & \qquad\qquad + \frac{L^2 \gamma}{2 n} \norm{\xx^k-\bar \xx^k}^2 + \frac{2L^2\gamma^3 \nk }{n^2}\left(\sigma^2 + \sum_{v\in\cV} \esp{\NRM{\nabla f_v(x_v^{\prev(v,k)})}^2}\right)
\end{align*}
Using that $\gamma < \frac{1}{4 L}$ we estimate
\begin{align*}
    \E_{k + 1} f(\hat x^{k + 1})
    &\leq f(\hat x^k) - \frac{\gamma}{8 n} \sum_{v \in \cI_k} \norm{\nabla f(x_v^k)}^2 - \frac{|\cI_k| \gamma}{2n}\norm{\nabla f(\bar x^k)}^2 + \frac{L^2 \gamma}{2 n} \norm{\xx^k-\bar \xx^k}^2\\
    & \qquad\qquad  + \frac{2L^2\gamma^3 \nk}{n^2}\left(\sigma^2 + \sum_{v\in\cV} \esp{\NRM{\nabla f_v(x_v^{\prev(v,k)})}^2}\right) + \frac{L \gamma^2\sigma^2\nk}{2 n^2} 
\end{align*}
Taking the full expectation and summing over all the iterations $k$, we get
\begin{align*}
    \sum_{k < K} \frac{\nk \gamma}{2n} \E \norm{\nabla f(\bar x^k)}^2 &\leq (f(x^0) - f^\star) - \frac{\gamma}{8 n} \sum_{k < K} \sum_{v \in \cI_k} \E \norm{\nabla f(x_v^k)}^2 + \frac{L^2 \gamma}{2n} \sum_{k < K} \E \norm{\xx^k - \bar \xx^k}^2 \\
    & \qquad \qquad + \frac{L \gamma^2\sigma^2\sum_{k < K}\nk}{2 n^2} (1 + 4 L \gamma) +  \frac{2L^2\gamma^3 }{n^2} \sum_{k < K} \sum_{v\in\cV}\nk\esp{\NRM{\nabla f_v(x_v^{\prev(v,k)})}^2}
\end{align*}
For the third term we use Lemma~\ref{lem:consensus_control}, and for the last term we use that 
    \begin{align*}
        \sum_{k<K}\nk\sum_{v\in\cV} \NRM{\nabla f(x_v^{\prev(v,k)})}^2 & \leq \sum_{v\in\cV} \sum_{k<K:v\in\cI_k} \NRM{\nabla f(x_v^{k})}^2\sum_{\ell=k}^{\nex(v,k+1)-1} \nl\\
        &\leq \tau_{\max}\sum_{v\in\cV} \sum_{k<K:v\in\cI_k} \NRM{\nabla f(x_v^{k})}^2
    \end{align*}
where $\tau_{\max}$ is an upper bound on the maximal compute delay defined as  $\tau_{\max}\geq \sup_{k<K}\sum_{\ell=k}^{\nex(v,k+1)-1} \nl$.
For estimating the third term with Lemma 2, we also use that
\begin{align*}
    \sum_{k<K}\sum_{v\in\cI_k}\esp{\NRM{\nabla f_v(x_v^{k-\tau(k,v)})}^2} &\leq \sum_{k<K}\sum_{v\in\cI_k}\esp{\NRM{\nabla f_v(x_v^{k})}^2}
\end{align*}

We therefore get
\begin{align*}
    \sum_{k < K} \frac{\nk \gamma}{2n} \E\norm{\nabla f(\bar x^k)}^2 &\leq (f(x^0) - f^\star) - \frac{\gamma}{8 n} \sum_{k < K} \sum_{v \in \cI_k} \E \norm{\nabla f(x_v^k)}^2 + \frac{L^2 \gamma}{2n} 2\gamma^2\sigma^2\brhom \NK \\
    & \qquad \qquad + \frac{2 L^2 \gamma^3}{n \bar\rho^2} \sum_{k<K}\sum_{v\in\cI_k}\esp{\NRM{\nabla f(x_v^{k})}^2} \\
    & \qquad \qquad + \frac{L \gamma^2\sigma^2\sum_{k < K}\nk}{2 n^2} (1 + 4 L \gamma) +  \frac{2L^2\gamma^3 }{n^2} \tau_{\max} \sum_{k < K} \sum_{v \in \cI_k} \E \norm{\nabla f(x_v^k)}^2
\end{align*}
We further use that the stepsize $\gamma < \frac{1}{8L} (\sqrt{\frac{n}{\tau_{\max}}} + \bar\rho)$
\begin{align*}
    \sum_{k < K} \frac{\nk \gamma}{2n} \E\norm{\nabla f(\bar x^k)}^2 &\leq (f(x^0) - f^\star) + \frac{L^2}{n}\gamma^3\sigma^2\brhom \NK + \frac{L \gamma^2\sigma^2\sum_{k < K}\nk}{n^2}
\end{align*}
Therefore,
\begin{align*}
    \sum_{k < K} \nk \E\norm{\nabla f(\bar x^k)}^2 &\leq \frac{2n}{\gamma}(f(x^0) - f^\star) + 2 L^2\gamma^2\sigma^2\brhom \NK + \frac{2 L \gamma\sigma^2\sum_{k < K}\nk}{n}
\end{align*}
Denoting $T = \NK$ and tuning over the stepsize $\gamma$, we get 
\begin{align*}
    \frac{1}{\NK}\sum_{k < K} \nk \E\norm{\nabla f(\bar x^k)}^2 \leq \frac{16 L F_0(\sqrt{n \tau_{\max}} + n \bar\rho^{-1})}{T} + 4 \left(\frac{L \sigma^2 F_0}{T} \right)^{\frac{1}{2}} + 4 \left(\frac{L \sigma n F_0}{T \sqrt{\bar \rho}}\right)^{\frac{2}{3}}
\end{align*}
where $F_0 = (f(x^0) - f^\star)$.
\end{proof}

\subsection{Heterogeneous with sampling}
\begin{proof}
    Using $L$-smoothness of $f$,
    \begin{align}\label{eq:initial}
        \E_{k + 1} f(\hat x^{k + 1})
        &\leq f(\hat x^k) - \underbrace{\frac{\gamma}{n} \E\sum_{v \in \cI_k} \left\langle \nabla f(\hat x^k), \nabla f_v(x_{v}^{k}) \right\rangle}_{:= T_1} +\frac{L \gamma^2}{2 n^2} \left(\sigma^2 \E \nk + \E\NRM{\sum_{v\in\cI_k}\nabla f_v(x_{v}^{k})}^2\right)
    \end{align}
    We separately estimate the $T_1$ term
    \begin{align*}
        T_1 &= - \frac{\gamma}{n}\E \sum_{v \in \cI_k} \left\langle \nabla f(\hat x^k), \nabla f_v(x_{v}^{k}) \right\rangle = - \frac{\gamma}{n}\E\sum_{v \in \cI_k} \left\langle \nabla f(\hat x^k), \nabla f_v(\bar x^{k}) \right\rangle + \frac{\gamma}{n}\sum_{v \in \cI_k} \E \left\langle \nabla f(\hat x^k), \nabla f_v(\bar x^{k}) - \nabla f_v(x_{v}^{k})\right\rangle \\
        &=  - \gamma \bar p \left\langle \nabla f(\hat x^k), \nabla f(\bar x^{k}) \right\rangle + \frac{\gamma}{n}\sum_{v \in \cI_k} \E \left\langle \nabla f(\hat x^k), \nabla f_v(\bar x^{k}) - \nabla f_v(x_{v}^{k})\right\rangle \\
        & \leq \gamma \bar p \left(-\frac{1}{2}\norm{\nabla f(\hat x^k)}^2 - \frac{1}{2}\norm{\nabla f(\bar x^{k})}^2 + \frac{L^2}{2} \norm{\hat x^k - \bar x^{k}}^2 \right) + \frac{\gamma}{n}\left(\frac{1}{2} n \bar p \norm{\nabla f(\hat x^k)}^2 + \frac{L^2}{2} \E \sum_{v \in \cI_k}\norm{x_v^k - \bar x^k}^2\right)
    \end{align*}
    Since $\E \sum_{v \in \cI_k}\nabla f_v (\bar x^k) = n \bar p~ \nabla f(\bar x^k)$, and $\E |\cI_k| = n \bar p$. We further use that $\E \sum_{v \in \cI_k}\norm{x_v^k - \bar x^k}^2 = \sum_{v \in \cV}p_v \norm{x_v^k - \bar x^k}^2 \leq p_{\max} \norm{\xx^k - \bar\xx^k}^2$.
    Therefore,
    \begin{align*}
        T_1 \leq - \frac{\gamma \bar p}{2} \norm{\nabla f(\bar x^k)} + \frac{\gamma \bar p L^2}{2}\norm{\hat x^k - \bar x^{k}}^2 + \frac{\gamma L^2 p_{\max}}{2 n} \norm{\xx^k - \bar\xx^k}^2
    \end{align*}
    Putting this back to \eqref{eq:initial} and summing it up over $K$, we get
    \begin{align*}
        \frac{\gamma \bar p}{2} \sum_{k < K} \E \norm{\nabla f(\bar x^k)}^2 &\leq (f(x^0) - f^\star) + \frac{\gamma \bar p L^2}{2} \sum_{k < K} \E \norm{\hat x^k - \bar x^k}^2 + \frac{\gamma L^2 p_{\max}}{2n }\sum_{k < K}\E \norm{\xx^k - \bar\xx^k}^2 + \frac{L \gamma^2 \sigma^2 \bar p K}{2 n} \\
        &+ \frac{L \gamma^2 }{2 n^2} \sum_{k < K} \E\NRM{\sum_{v\in\cI_k}\nabla f_v(x_{v}^{k})}^2
    \end{align*}
We use calculations from Section~\ref{sec:het-lipshitz-convex} to further estimate the last term
\begin{align*}
    \E\NRM{\sum_{v\in\cI_k}\nabla f_v(x_{v}^{k})}^2 \leq  \sum_{v\in\cV} p_v \NRM{\nabla f_v(x_{v}^{k})}^2 +   \NRM{\sum_{v\in\cV} p_v\nabla f_v(x_{v}^{k})}^2
\end{align*}
\begin{align}\label{eq:sum_pv_norm}
        \sum_{v\in\cV}p_v\NRM{\nabla f_v(x_{v}^{k})}^2 &\leq 2L^2p_{max}\NRM{\xx^k-\bar\xx^k}^2+2n\bar p\NRM{\nabla f(\bar x^{k})}^2 + 2n\bar p\zeta^2\,.
    \end{align}
    \begin{align*}
        \NRM{\sum_{v\in\cV}p_v\nabla f_v(x_{v}^{k})}^2 &\leq 2(n\bar p)^2\NRM{\nabla f(\bar x^k)}^2 + 2(n\bar p)p_{\max}L^2\NRM{\xx^{k}-\bar \xx^{k}}^2\\
    \end{align*}
    We therefore get
    \begin{align*}
        \frac{\gamma \bar p}{2} \sum_{k < K} \E \norm{\nabla f(\bar x^k)}^2 &\leq (f(x^0) - f^\star) + \frac{\gamma \bar p L^2}{2} \sum_{k < K} \E \norm{\hat x^k - \bar x^k}^2 + \frac{\gamma L^2 p_{\max}}{2n }\sum_{k < K}\E \norm{\xx^k - \bar\xx^k}^2 + \frac{L \gamma^2 \sigma^2 \bar p K}{2 n} \\
        &\qquad\qquad + \frac{L \gamma^2}{n^2} \left( (L^2p_{\max} + (n\bar p)p_{\max}L^2)\NRM{\xx^k-\bar\xx^k}^2 + (n\bar p + (n \bar p)^2)\NRM{\nabla f(\bar x^{k})}^2 + n\bar p\zeta^2\right)\\
        &\leq (f(x^0) - f^\star) + \frac{\gamma \bar p L^2}{2} \sum_{k < K} \E \norm{\hat x^k - \bar x^k}^2 + \frac{\gamma L^2 p_{\max}}{2n}\left[1 + 2 L \gamma \left(\frac{1}{n} + \bar p\right)\right]\sum_{k < K}\E \norm{\xx^k - \bar\xx^k}^2 \\
        &\qquad\qquad + \frac{L \gamma^2 \bar p K \sigma^2}{2 n} + \frac{L \gamma^2 n\bar p (1+ n \bar p)}{n^2} \sum_{k < K} \NRM{\nabla f(\bar x^{k})}^2 + \frac{L \gamma^2 \bar p\zeta^2 K}{n}\\
    \end{align*}
    We further use Lemma~\ref{lem:virtual_iterates} to estimate the term with $\E \norm{\hat x^k - \bar x^k}^2$:
    \begin{align*}
        \esp{\NRM{\hat x^k-\bar x^k}^2}\leq \frac{2\gamma^2}{n}\left( \sigma^2 + \sum_{v\in\cV} \esp{\NRM{\nabla f_v(x_v^{\prev(v,k)})}^2}\right)
    \end{align*}
    And we use calculations from Section~\ref{sec:heter-smooth} estimating
    \begin{align*}
        \esp{\sum_{v\in\cV}\sum_{k<K} \NRM{\nabla f_v(x_v^{\prev(v,k)})}^2} &\leq \frac{1}{p_{\min}}\sum_{v\in\cV}\sum_{k<K} p_v\esp{\NRM{\nabla f_v(x_v^k)}^2}\,.
    \end{align*}
    And \eqref{eq:sum_pv_norm} to estimate the last term. Therefore we get
    \begin{align*}
        \sum_{k < K}\esp{\NRM{\hat x^k-\bar x^k}^2}\leq \frac{2\gamma^2}{n}\left( \sigma^2 K + \frac{1}{p_{\min}}\sum_{k<K} 2L^2p_{max}\NRM{\xx^k-\bar\xx^k}^2+2n\bar p\sum_{k<K}\NRM{\nabla f(\bar x^{k})}^2 + 2n\bar p\zeta^2 K\right)
    \end{align*}
    And
    \begin{align*}
        \frac{\gamma \bar p}{2} \sum_{k < K} \E \norm{\nabla f(\bar x^k)}^2 &\leq (f(x^0) - f^\star) + \frac{\gamma L^2 p_{\max}}{2n}\left[1 + 2 L \gamma \left(\frac{1}{n} + \bar p\right) + \frac{4 \gamma^2 L^2 \bar p}{p_{\min}}\right]\sum_{k < K}\E \norm{\xx^k - \bar\xx^k}^2 \\
        & + \frac{L \gamma^2 \bar p K \sigma^2}{2 n} (1 + \gamma L) + \frac{L \gamma^2 n\bar p (1+ n \bar p + 2 \gamma L n \bar p/p_{\min})}{n^2} \sum_{k < K} \NRM{\nabla f(\bar x^{k})}^2 + \frac{L \gamma^2 \bar p\zeta^2 K}{n}(1 + 2 n \bar p \gamma L)\\
    \end{align*}
We further use that $\gamma < \min\left\{\frac{1}{4L}, \frac{\sqrt{\bar p}}{4 L \sqrt{p_{\min}}} \right\}$
    \begin{align*}
        \frac{\gamma \bar p}{2} \sum_{k < K} \E \norm{\nabla f(\bar x^k)}^2 &\leq (f(x^0) - f^\star) + \frac{\gamma L^2 p_{\max}}{n}\sum_{k < K}\E \norm{\xx^k - \bar\xx^k}^2 \\
        & + \frac{L \gamma^2 \bar p K \sigma^2}{2 n} (1 + \gamma L) + 3 L \gamma^2 \bar p \sum_{k < K} \NRM{\nabla f(\bar x^{k})}^2 + \frac{L \gamma^2 \bar p\zeta^2 K}{n}(1 + 2 n \bar p \gamma L)\\
    \end{align*}
We further use Lemma~\ref{lem:consensus_advanced}:
    \begin{align*}
        \frac{\gamma \bar p}{2} \sum_{k < K} \E \norm{\nabla f(\bar x^k)}^2 &\leq  \frac{\gamma L^2 p_{\max}}{n} \left[ 4\gamma^2\sigma^2\brhom n\bar p K + 8\gamma^2\bar\rho^{-2}\sum_{v\in\cV}\NRM{\nabla f_v(\bar x^0)}^2 + 16\gamma^2\bar\rho^{-2} n\bar p\sum_{k<K}\big(\NRM{\nabla f(\bar x^k)}^2 + \zeta^2\big) \right] \\
        & + \frac{L \gamma^2 \bar p K \sigma^2}{2 n} (1 + \gamma L) + 3 L \gamma^2 \bar p \sum_{k < K} \NRM{\nabla f(\bar x^{k})}^2 + \frac{L \gamma^2 \bar p\zeta^2 K}{n}(1 + 2 n \bar p \gamma L) + (f(x^0) - f^\star)\\
        &\leq (f(x^0) - f^\star) + 4\gamma^3 L^2 p_{\max} \bar p \bar \rho^{-1} K \sigma^2 + \frac{8 \gamma^3 L^2 p_{\max} \bar \rho^{-2} }{n}\sum_{v\in\cV}\NRM{\nabla f_v(\bar x^0)}^2 + \frac{L \gamma^2 \bar p K \sigma^2}{2 n} (1 + \gamma L)\\
        & + L \gamma^2\bar p (3 + 16 \gamma L \bar \rho^{-2} p_{\max}) \sum_{k < K} \NRM{\nabla f(\bar x^{k})}^2 + \frac{2 L \gamma^2 \bar p\zeta^2 K}{n}(1 + 8 \rho^{-2} n \gamma L p_{\max})
    \end{align*}
    Taking the stepsize $\gamma < \min\{\frac{1}{24 L}, \frac{\bar \rho}{16 L \sqrt{p_{\max}}}\}$ we get:
    \begin{align*}
        \frac{\gamma \bar p}{4} \sum_{k < K} \E \norm{\nabla f(\bar x^k)}^2 &\leq (f(x^0) - f^\star) + 4\gamma^3 L^2 p_{\max} \bar p \bar \rho^{-1} K \sigma^2 + \frac{8 \gamma^3 L^2 p_{\max} \bar \rho^{-2} }{n}\sum_{v\in\cV}\NRM{\nabla f_v(\bar x^0)}^2 + \frac{2L \gamma^2 \bar p K \sigma^2}{2 n}\\
        & + \frac{2 L \gamma^2 \bar p\zeta^2 K}{n}(1 + 8 \rho^{-2} n \gamma L p_{\max})
    \end{align*}
We conclude as in the smooth convex case by tuning the stepsize and getting rid of the $\frac{8 \gamma^3 L^2 p_{\max} \bar \rho^{-2} }{n}\sum_{v\in\cV}\NRM{\nabla f_v(\bar x^0)}^2$.
\end{proof}

\end{document}